\def\co{\colon\thinspace}
\DeclareMathAlphabet{\mathsfsl}{OT1}{cmss}{m}{sl}
\newcommand{\spin}{\mathrm{Spin}^c}
\newcommand{\HFtildered}{\widetilde{HF}_{\mathrm{red}}}
\newtheorem{thm}{Theorem}[section]
\newtheorem{lem}[thm]{Lemma}
\newtheorem{cor}[thm]{Corollary}
\newtheorem{prop}[thm]{Proposition}
\newtheorem*{thm*}{Theorem}
\theoremstyle{definition}
\newtheorem{rem}[thm]{Remark}
\newtheorem{conv}[thm]{Convention}
\newtheorem{exam}[thm]{Example}
\newtheorem{ques}[thm]{Question}
\begin{document}

\def\G{{\Gamma}}
  \def\d{{\delta}}
  \def\ci{{\circ}}
  \def\e{{\epsilon}}
  \def\l{{\lambda}}
  \def\L{{\Lambda}}
  \def\m{{\mu}}
  \def\n{{\nu}}
  \def\o{{\omega}}
  \def\O{{\Omega}}
  \def\Th{{\Theta}}\def\s{{\sigma}}
  \def\v{{\varphi}}
  \def\a{{\alpha}}
  \def\b{{\beta}}
  \def\p{{\partial}}
  \def\r{{\rho}}
  \def\ra{{\rightarrow}}
  \def\lra{{\longrightarrow}}
  \def\g{{\gamma}}
  \def\D{{\Delta}}
  \def\La{{\Leftarrow}}
  \def\Ra{{\Rightarrow}}
  \def\x{{\xi}}
  \def\c{{\mathbb C}}
  \def\z{{\mathbb Z}}
  \def\2{{\mathbb Z_2}}
  \def\q{{\mathbb Q}}
  \def\t{{\tau}}
  \def\u{{\upsilon}}
  \def\th{{\theta}}
  \def\la{{\leftarrow}}
  \def\lla{{\longleftarrow}}
  \def\da{{\downarrow}}
  \def\ua{{\uparrow}}
  \def\nwa{{\nwtarrow}}
  \def\swa{{\swarrow}}
  \def\nea{{\netarrow}}
  \def\sea{{\searrow}}
  \def\hla{{\hookleftarrow}}
  \def\hra{{\hookrightarrow}}
  \def\sl{{SL(2,\mathbb C)}}
  \def\ps{{PSL(2,\mathbb C)}}
  \def\qed{{\hspace{2mm}{\small $\diamondsuit$}\goodbreak}}
  \def\pf{{\noindent{\bf Proof.\hspace{2mm}}}}
  \def\ni{{\noindent}}
  \def\sm{{{\mbox{\tiny M}}}}
   \def\sf{{{\mbox{\tiny F}}}}
   \def\sc{{{\mbox{\tiny C}}}}
  \def\ke{{\mbox{ker}(H_1(\partial M;\2)\ra H_1(M;\2))}}
  \def\et{{\mbox{\hspace{1.5mm}}}}

\title{Characterizing slopes for torus knots}

\author{{Yi NI}\\{\normalsize Department of Mathematics, Caltech}\\
{\normalsize 1200 E California Blvd, Pasadena, CA 91125}
\\{\small\it Email\/:\quad\rm yini@caltech.edu}
\\\\
{Xingru ZHANG}
\\
{\normalsize Department of Mathematics,
University at Buffalo}\\
{\small\it Email\/:\quad\rm xinzhang@buffalo.edu}}

\date{}
\maketitle

\begin{abstract}A   slope $\frac pq$ is called a characterizing slope for a given knot $K_0$ in $S^3$
if whenever the $\frac pq$-surgery  on a knot $K$ in $S^3$ is homeomorphic to the $\frac pq$-surgery on $K_0$ via an orientation preserving homeomorphism, then $K=K_0$.
In this paper we try to find characterizing slopes for torus knots $T_{r,s}$.
We show that  any slope
$\frac pq$ which is larger than the number $\frac{30(r^2-1)(s^2-1)}{67}$ is a characterizing slope
for $T_{r,s}$.
 The proof uses Heegaard Floer homology and Agol--Lackenby's 6--Theorem. In the case of $T_{5,2}$, we
 obtain more specific information about its set of characterizing slopes by applying more Heegaard Floer homology techniques.
\end{abstract}

\section{Introduction}\label{Sect:Intr}

There had been a long standing conjecture due to Gordon that if for some nontrivial slope $\frac pq\ne \frac 10$, the $\frac pq$-surgery on a knot $K\subset S^3$ is homeomorphic to the $\frac pq$-surgery on the unknot in $S^3$ via an orientation preserving homeomorphism, then $K$ must be the unknot.
This conjecture was originally proved using Monopole Floer homology \cite{KMOSz}, and there were also proofs via Heegaard Floer homology \cite{OSzGenus,OSzRatSurg}.
 It is natural  to ask if there are other knots in $S^3$ which admit a similar Dehn surgery characterization as the unknot. In \cite{OSz3141} Ozsv\'ath and Szab\'o  showed  that the trefoil knot and the figure--$8$ knot are two such knots.
In the proof of these results, one uses the fact that the unknot is the the only genus zero knot, and the trefoil knot and the figure--$8$ knot are the only genus one fibred knots.

For a given knot $K_0\subset S^3$, we call a  slope $\frac pq$ {\it a characterizing slope for $K_0$} if whenever the $\frac pq$-surgery  on a knot $K$ in $S^3$ is homeomorphic to the $\frac pq$-surgery on $K_0$ via an orientation preserving homeomorphism, then $K=K_0$.
In this terminology, the results cited above were saying that for each of the unknot, the trefoil knot
and the figure--$8$ knot, every nontrivial slope is a characterizing slope.

On the other hand there are  infinitely many knots in $S^3$  which have nontrivial  non-characterizing slopes, including some genus two fibred knots.
Osoinach \cite{Os} found examples of infinitely many knots in $S^3$ on which the $0$-surgery yields the same manifold. In one case, the knots he constructed are all genus 2 fibred knots, one of which is the connected sum of two copies of the figure--$8$ knot.
Teragaito \cite{Tera} constructed infinitely many knots on which the $4$-surgery yields the same Seifert fibred space over the base orbifold $S^2(2,6,7)$. One of these knots is $9_{42}$, again a genus $2$ fibred knot.
The following two examples show that some torus knots also have nontrivial non-characterizing slopes.

 \begin{exam}
 Consider the $21$--surgery on the torus knots $T_{5,4}$ and $T_{11,2}$. By \cite{Moser}, the resulting oriented manifolds are the lens spaces $L(21,16)$ and $L(21,4)$. Here our orientation on the lens space $L(p,q)$ is induced from the orientation on $S^3$ by $\frac pq$--surgery on the unknot. Since $16\cdot 4=64\equiv1\pmod{21}$, $L(21,16)\cong L(21,4)$, where (and throughout the paper) $\cong$ standards for orientation preserving homeomorphism. Similarly, the $(n^3+6n^2+10n+4)$--surgery on $T_{n^2+3n+1,n+3}$ and $T_{n^2+5n+5,n+1}$ give rise to homeomorphic oriented lens spaces.
\end{exam}

\begin{exam}
Let $K$ be the $(59,2)$--cable of $T_{6,5}$, denoted $K=C_{59,2}\circ T_{6,5}$, and consider the $119$--surgery on $T_{24,5}$ and $K$. The resulting manifold of the surgery on $K$ is the same as the $\frac{119}4$ surgery on $T_{6,5}$, which is the lens space $L(119,100)$. The $119$--surgery on $T_{24,5}$ is the lens space $L(119,25)$. Since $25\cdot 100\equiv1\pmod{119}$, $L(119,100)\cong L(119,25)$. There are infinitely many such pairs of torus knot and cable knot, of which the first few are listed below:
\begin{center}
 \begin{tabular}{|c|c|c|c|c|}
 \hline
$p$ & 119 &697 &4059 &23661 \\ \hline
knots &
$T_{24,5}, C_{59,2}\circ T_{6,5}$ &$T_ {29,24}, C_{349,2}\circ T_{29,6}$ &$T_{140,29}, C_{2029,2}\circ T_{35,29}$ &$T_{169,140}, C_{11831,2}\circ T_{169,35}$ \\
\hline
 \end{tabular}
\end{center}\end{exam}

In general it is a challenging  problem to determine,  for a given knot, its set of characterizing
slopes.
 Our aim in this paper is to focus on torus knots, trying to find their sets of characterizing slopes.
 In this regard, Rasmussen \cite{RasBerge} proved, using a result of Baker \cite{Baker}, that  $(4n+3)$ is a characterizing slope for  $T_{2n+1,2}$.
It was also known, as a consequence of a result of Greene \cite{GreeneRed},  that $rs$ is a characterizing slope for  $T_{r,s}$.
Our first result is the following theorem.

\begin{thm}\label{thm:TorusKnot}
For a torus knot $T_{r,s}$ with  $r>s>1$,  a nontrivial slope $\frac pq$ is a characterizing slope
if it is  larger than the number $\frac{30(r^2-1)(s^2-1)}{67}$.
\end{thm}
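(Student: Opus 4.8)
The plan is to suppose we are given an orientation-preserving homeomorphism $f\co S^3_{p/q}(K)\to S^3_{p/q}(T_{r,s})$ with $\frac pq>\frac{30(r^2-1)(s^2-1)}{67}$ (so in particular $p,q>0$), and to deduce $K=T_{r,s}$. Write $N:=S^3_{p/q}(T_{r,s})$. Since $\frac pq$ is far above $2g(T_{r,s})-1=(r-1)(s-1)-1$, one checks that $n:=p-qrs\ge 2$, so by Moser $N$ is an irreducible, atoroidal small Seifert fibred space over $S^2(r,s,n)$, and it is an L-space. As $N\cong S^3_{p/q}(K)$ is an L-space and $\frac pq>0$, the knot $K$ is an L-space knot; hence by Ozsv\'ath--Szab\'o and Ni it is fibred, $CFK^\infty(K)$ (and so $\widehat{HFK}(K)$) is the staircase complex attached to $\Delta_K$, and $\frac pq\ge 2g(K)-1$. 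The correction terms of $\frac pq$-surgery on an L-space knot are computed by the Ozsv\'ath--Szab\'o rational surgery formula from $p$, $q$ and the torsion coefficients $t_i$ of $\Delta$; since $f$ is orientation preserving it matches the $d$-invariants of the two sides, and once $\frac pq$ is large enough both to ``see'' every $t_i$ and to pin down the a priori only affine bijection it induces on $\spin$ structures, one obtains $t_i(K)=t_i(T_{r,s})$ for all $i$, hence $\Delta_K=\Delta_{T_{r,s}}$, $g(K)=\tfrac{(r-1)(s-1)}2$ and $\widehat{HFK}(K)\cong\widehat{HFK}(T_{r,s})$.

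With this in hand: if $K$ is itself a torus knot, then $\Delta_K=\Delta_{T_{r,s}}$ forces $K=T_{r,s}$, since the Alexander polynomial of $T_{r,s}$ determines the pair $r,s$. So we may assume $K$ is not a torus knot, whence $S^3\setminus K$ is hyperbolic or toroidal. In the toroidal case $K$ is a satellite, and by the Heegaard Floer structure theory of satellite L-space knots (Hedden, Hom) $K$ is a cable $C_{a,b}(K')$ with $K'$ an L-space knot and $a\ge 2$; Gordon's cabling formula together with the irreducibility and atoroidality of $N$ then forces $|p-qab|=1$ and $N\cong S^3_{p/(qa^2)}(K')$. Iterating and invoking Moser's classification, the innermost companion is either a torus knot or a hyperbolic L-space knot. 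The torus-knot case is excluded by comparing the Seifert invariants and genera of $N$ as a surgery on both that torus knot and $T_{r,s}$ (and it is here that the size of $\frac pq$ again enters, the smaller-slope examples of the Introduction showing this cannot be dispensed with), while in the hyperbolic case an elementary computation with Gordon's formula and the behaviour of $\Delta$ under cabling shows the induced surgery coefficient on the companion still exceeds the allowed bound, reducing to the hyperbolic case below. Thus $K$ is not a satellite.

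It remains to rule out $S^3\setminus K$ hyperbolic, and this is where the $6$-Theorem enters. Fix the maximal cusp of $M:=S^3\setminus K$, a Euclidean torus of area $A$. The meridian $\mu_K$ fills $M$ to $S^3$ and the slope $\frac pq$ fills $M$ to $N$; neither is hyperbolic, so by the Agol--Lackenby $6$-Theorem both slopes have length at most $6$ in the cusp. Since $\ell(\mu_K)\,\ell(\tfrac pq)\ge q\,A$ and $A$ is bounded below by a universal constant, this already bounds $q$; and feeding $\ell(\mu_K)\le 6$, $\ell(\tfrac pq)\le 6$, this bound on $q$, and the genus identity $g(K)=\tfrac{(r-1)(s-1)}2$ from the first paragraph (which controls both the cusp shape and, via the Thurston norm of the fibre, how long an exceptional filling slope can be) into a careful cusp-geometry estimate yields $\frac pq\le\frac{30(r^2-1)(s^2-1)}{67}$, contradicting the hypothesis. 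Hence $S^3\setminus K$ is not hyperbolic, and together with the two previous paragraphs this gives $K=T_{r,s}$.

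The hard part is this last step: extracting the explicit constant $\frac{30(r^2-1)(s^2-1)}{67}$ from the $6$-Theorem demands an effective estimate tying the length of an exceptional filling slope of a fibred hyperbolic knot both to $p,q$ and to the genus of the fibre (hence, through the first paragraph, to $r$ and $s$), and the stated fraction is what emerges after optimizing these inequalities against the universal lower bound for the cusp area of a knot in $S^3$. A secondary difficulty, again calibrated exactly by the size of $\frac pq$, is the satellite analysis of the second paragraph, where the Floer-theoretic restrictions on satellite L-space knots must be combined with the precise behaviour of cable surgeries so as to exclude the genuine non-characterizing examples of the Introduction, which occur only at smaller slopes.
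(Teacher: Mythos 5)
Your overall architecture (L-space/fibredness input from Heegaard Floer, then the trichotomy hyperbolic/satellite/torus with the 6--Theorem handling the hyperbolic case) matches the paper, but the load-bearing first step has a genuine gap. You claim that for $\frac pq$ large enough the matching of correction terms forces $t_i(K)=t_i(T_{r,s})$ for all $i$, hence $\Delta_K=\Delta_{T_{r,s}}$ and $g(K)=g(T_{r,s})$. This is not justified: the homeomorphism only induces an \emph{a priori unknown} affine bijection of $\spin$ structures, and pinning that down is exactly the delicate part (the paper spends most of Section~4 doing this just for $T_{5,2}$, and only on restricted slope ranges). The paper sidesteps this entirely with a different key lemma: the Casson--Walker surgery formula, which is insensitive to the $\spin$ identification and yields only $\Delta_K''(1)=\Delta_{T_{r,s}}''(1)$. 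Combined with the rigid form of the Alexander polynomial of an L-space knot ($\Delta_K''(1)\ge 2(2g(K)-1)$), this gives the genus \emph{inequality} $2g(K)-1\le\frac{(r^2-1)(s^2-1)}{24}$, not equality. That inequality is the actual quantitative input to the 6--Theorem step.

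This also explains an internal inconsistency in your write-up: the constant $\frac{30(r^2-1)(s^2-1)}{67}$ is exactly $\frac{36}{3.35}\cdot\frac{(r^2-1)(s^2-1)}{24}$, i.e.\ it comes from Agol's bound $l_C(\lambda)\le 6(2g(K)-1)$ on the \emph{longitude} (not the meridian, whose pairing with $\frac pq$ only bounds $q$), the 6--Theorem bound $l_C(\frac pq)\le 6$, Cao--Meyerhoff's $\mathrm{Area}(\partial C)\ge 3.35$, and the identity $|p|=\Delta(\lambda,\frac pq)=l_C(\lambda)l_C(\frac pq)\sin\theta/\mathrm{Area}(\partial C)$, all fed with the \emph{weak} genus bound above. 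If your claimed equality $g(K)=\frac{(r-1)(s-1)}2$ were actually available, the same computation would give a bound of order $rs$, not $(rs)^2$, so the stated constant cannot emerge from the argument as you describe it. Downstream, your torus-knot and satellite cases also lean on $\Delta_K=\Delta_{T_{r,s}}$; the paper instead gets by with $(r^2-1)(s^2-1)=(m^2-1)(n^2-1)$ from the $\Delta''$ identity plus Moser's classification, and handles the satellite case via Gabai's solid-torus surgery theorem together with Lackenby--Meyerhoff (to exclude a hyperbolic companion when $qw^2\ge 12$). Replacing your first paragraph with the Casson--Walker argument would repair the proof and make the constant come out as stated.
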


The proof of Theorem~\ref{thm:TorusKnot} uses a genus bound on $K$ from Heegaard Floer homology, as well as the 6--Theorem of Agol \cite{Agol} and Lackenby \cite{Lack}.
A more careful study involving Heegaard Floer homology should definitely give a smaller bound, which is expected to be of order $rs$.
For example, in the special case of $T_{5,2}$, we have the following theorem.

\begin{thm}\label{thm:T52}
For the torus knot $T_{5,2}$, its set of characterizing slopes includes
the set of slopes
\begin{center}$\displaystyle\left\{\frac pq>1, |p|\ge 33\right\}\cup\left\{
\frac pq<-6, |p|\ge33, |q|\ge2\right\}\cup \left\{\frac pq,  |q|\ge9\right\}
\cup\left\{\frac pq,   |q|\geq 3, 2\leq |p|\leq 2|q|-3\right\}$\\
$\displaystyle\cup\left\{9,10,11,\frac{19}{2},\frac{21}2,
\frac{28}{3}, \frac{29}{3}, \frac{31}{3}, \frac{32}{3} \right\}.$
\end{center}
\end{thm}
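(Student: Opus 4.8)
The plan is to adapt the strategy of Theorem~\ref{thm:TorusKnot} but to squeeze much more out of the Heegaard Floer package, since $T_{5,2}$ has Alexander polynomial, genus ($g=2$), and knot Floer homology completely explicit. Suppose $S^3_{p/q}(K) \cong S^3_{p/q}(T_{5,2})$ via an orientation-preserving homeomorphism. The first step is to run the standard $d$-invariant / mapping-cone machinery: the correction terms of $S^3_{p/q}(K)$ are determined by the Heegaard Floer homology of $K$ through Ozsv\'ath--Szab\'o's rational surgery formula, and since $T_{5,2}$ is an $L$-space knot, comparing the two sides should pin down $\tau(K)$, the Seifert genus $g(K)$ (forcing $g(K)=2$ on the relevant slope ranges), and in fact the full staircase complex $CFK^\infty(K)$, so that $K$ is an $L$-space knot with the same knot Floer homology as $T_{5,2}$. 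I would separate the argument by the sign of $p/q$ and by whether $|q|=1$, $|q|=2$, or $|q|$ large, since the surgery formula and the available genus bounds behave differently: for large $|q|$ one gets the sharpest control on $g(K)$, which is why the $\{|q|\ge 9\}$ and $\{|q|\ge 3,\ 2\le|p|\le 2|q|-3\}$ families appear; for $|q|=1$ one needs the explicit integer-surgery $d$-invariants and the constraint $|p|\ge 33$ (this is the range where the $6$-Theorem argument of Theorem~\ref{thm:TorusKnot} can be sharpened by hand using $g=2$); and the negative slopes $\frac pq<-6$ need the reflection $S^3_{-p/q}(K)=-S^3_{p/q}(\overline K)$ together with the fact that $\overline{T_{5,2}}$ is not an $L$-space knot, which is what forces the extra $|p|\ge 33,\ |q|\ge 2$ conditions.

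Once $K$ is known to be a genus-$2$ $L$-space knot with $CFK^\infty(K)\cong CFK^\infty(T_{5,2})$, the second step is to conclude $K=T_{5,2}$. The key input is that a genus-two $L$-space knot with this staircase must be fibred (by Ozsv\'ath--Szab\'o / Ghiggini, the top Alexander grading of $\widehat{HFK}$ detects fibredness), and the only genus-two fibred knot with monodromy compatible with being an $L$-space knot — equivalently, the only genus-two fibred $L$-space knot — is $T_{5,2}$ itself. This is the exact analogue of the "trefoil and figure-eight are the only genus-one fibred knots" input used in \cite{OSz3141}, now one genus higher; I expect it to be available either from a direct classification of small-genus fibred $L$-space knots or by combining the fibredness with the known constraint that $T_{5,2}$-surgeries that are Seifert-fibred or lens are rigid in the relevant slope range. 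For the finite sporadic list $\{9,10,11,\tfrac{19}{2},\tfrac{21}{2},\tfrac{28}{3},\tfrac{29}{3},\tfrac{31}{3},\tfrac{32}{3}\}$, which falls outside all the infinite families, the plan is a case-by-case analysis: these are precisely the slopes near $rs=10$ where the surgered manifold is small Seifert fibred or has small $|H_1|$, so one computes the $d$-invariants and the Casson--Walker invariant explicitly, checks that they force $g(K)=2$ and $CFK^\infty(K)\cong CFK^\infty(T_{5,2})$, and then invokes the same fibredness conclusion; the slope $10=rs$ is already covered by Greene's result cited in the introduction, which can be quoted directly.

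The main obstacle I anticipate is the $|q|=1$ and the negative-slope ranges, rather than the large-$|q|$ families. For large $|q|$ the genus bound from the surgery formula is strong enough that one quickly gets $g(K)\le 2$ and then equality, and the mapping cone pins down the complex; but for $|q|=1$ the available bound is weaker, so one must instead use the $6$-Theorem (as in Theorem~\ref{thm:TorusKnot}) to control the geometry of the exterior of $K$ and rule out hyperbolic and small-Seifert-fibred alternatives, and then separately handle the satellite case — here is where the cabling examples in Example~2 live, and the thresholds $|p|\ge 33$ are presumably exactly what is needed to push those cabling obstructions out of range. Getting the constants ($33$, $-6$, the denominators $2,3$) to line up will require carefully tracking the $d$-invariant inequalities and the $6$-Theorem area estimate simultaneously, and organizing the negative slopes via the mirror $\overline{T_{5,2}}$ while keeping the orientation conventions straight. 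I would set up all the Floer-theoretic lemmas first, in full generality, and only at the end feed in $g=2$ and the explicit staircase to read off the four infinite families and verify the nine sporadic slopes one at a time.
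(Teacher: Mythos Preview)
Your proposal rests on two claims that are not available and are not what the paper does.

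First, the Heegaard Floer comparison does \emph{not} force $K$ to be an $L$-space knot with $CFK^\infty(K)\cong CFK^\infty(T_{5,2})$. The paper's Theorem~\ref{thm:Fibered} is exactly this computation, and its output is weaker: either $K$ is a genus-$(n+1)$ fibred knot with Alexander polynomial~(\ref{eq:DeltaKn}), or $K$ is a genus-$1$ knot with $\Delta_K(T)=3T-5+3T^{-1}$. The second option is not an $L$-space knot (the coefficients are $\pm3,\pm5$), and it is only ruled out later by homological arguments on branched covers (Remark~\ref{addendum}) or by geometric input, not by the mapping-cone data alone. Even in the fibred case, pinning down $n=1$ (hence $g=2$) requires the extra Spin$^c$-matching arguments of Subsections~\ref{subsect:>1}--\ref{subsect:<-6}, which is where the thresholds $\frac pq>1$ and $\frac pq<-6,\ |q|\ge2$ actually come from.

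Second, and more seriously, the assertion that ``the only genus-two fibred $L$-space knot is $T_{5,2}$'' is not a known theorem, and no analogue of the genus-one classification used in \cite{OSz3141} exists at genus two. The paper does not attempt this. Instead, once $g(K)\le2$ (with fibredness when $g=2$), it rules out \emph{hyperbolic} $K$ by geometric means: the $6$-Theorem bound $|p|<10.75(2g(K)-1)\le 32.25$ gives the threshold $|p|\ge33$; the Lackenby--Meyerhoff bound on exceptional slopes gives the family $|q|\ge9$ (this is not a surgery-formula genus bound, as you suggest); and an essential-lamination argument from \cite{GO,Wu} handles the family $|q|\ge3,\ 2\le|p|\le 2|q|-3$. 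Having excluded hyperbolic $K$, the remaining torus and satellite possibilities are then dispatched by an elementary Alexander-polynomial and cabling analysis (Lemma~\ref{lem:SorT}, Proposition~\ref{prop:q9}). The sporadic slopes are handled individually using the cyclic surgery theorem, Baker's lens-space result, Greene for $p=10$, and Boyer--Zhang for the spherical space forms---again, all to rule out hyperbolic $K$, not to invoke any Floer-theoretic uniqueness.

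In short: your Floer step overreaches, and the step from ``genus-$2$ fibred with the right $\widehat{HFK}$'' to ``$K=T_{5,2}$'' is the entire difficulty, which the paper resolves geometrically rather than algebraically.
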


Theorem \ref{thm:T52}
is essentially saying that besides some  finite set of slopes $p/q$ with $-47\leq p\leq 32$ and $1\le q\leq 8$,
only negative integer slopes could possibly be  nontrivial non-characterizing slopes for $T_{5,2}$.
We suspect that for $T_{5,2}$ every nontrivial slope is a characterizing slope.

Throughout this paper our notations are consistent. A slope $\frac pq$ of  a knot in $S^3$ is always parameterized with respect to the standard meridian-longitude coordinates
 so that a meridian has slope $\frac 10$ and a longitude $0$, and  $p, q$ are always assumed to be relatively prime.
Our definition for the $(m,n)$-cabled knot $K$ on a knot  $K'$ in $S^3$ is standard,  i.e.
 $m, n$ are a pair of  relatively prime integers with $|n|>1$ and $K$
  can be embedded in the boundary torus of a regular neighborhood of $K'$
having slope $\frac mn$ for the knot $K'$.
In particular the $(m,n)$-cabled knot on the unknot is called a torus knot, which we denote by $T_{m,n}$.
Given a knot $K\subset S^3$,
   $S^3_{p/q}(K)$ will denote the oriented $3$-manifold obtained by $\frac pq$-surgery on $S^3$ along  $K$,
 with the orientation induced from that of $S^3-K$ whose orientation is in turn induced from
 a fixed orientation of $S^3$. Similarly if $L$ is a null-homologous knot in a rational
  homology sphere $Y$, $Y_{p/q}(L)$ will denote the oriented $3$-manifold obtained by
  the $\frac pq$-surgery on $Y$ along $L$, with $\frac pq$ parameterized by the standard meridian-longitude
  coordinates of $L$.
  For a knot $K$ in $S^3$, $g(K)$ will denote the genus of $K$.
For two slopes $\frac pq$ and $\frac mn$ of a knot, $\D(\frac pq, \frac mn)$ will denote the ``distance''
between the two slopes, which is equal to $|pn-qm|$.

We conclude this section by raising  the following two questions for which  solutions might not be too remote
to reach.

 \begin{ques}Can every nontrivial slope be realized as a non-characterizing slope
 for some knot in $S^3$?
 \end{ques}

\begin{ques}If $K_0$ is a hyperbolic knot and $\frac pq$ is a  slope with $|p|+|q|$ sufficiently large,
 must $\frac pq$ be a characterizing slope for $K_0$?
 \end{ques}

The paper is organized as follows. In Section~\ref{sect:TorusKnot}, we prove Theorem~\ref{thm:TorusKnot} by using a genus bound from Heegaard Floer homology and Agol--Lackenby's 6--Theorem. The rest of the paper is dedicated to the proof of Theorem~\ref{thm:T52}. In Section~\ref{sect:Pre}, we give some necessary background on Heegaard Floer homology, including Ozsv\'ath--Szab\'o's rational surgery formula. We prove an explicit formula for $HF_{\mathrm{red}}(S^3_{p/q}(K))$ (Proposition~\ref{prop:CompRed}), which is of independent interest.  In Section~\ref{sect:GenusBound}, we deduce information about knot Floer homology under the
 Dehn surgery condition: $S^3_{p/q}(K)\cong S^3_{p/q}(T_{5,2})$ for $\frac pq>1$, or $\frac pq<-6$ and $|q|>2$, and conclude that $K$ is  either a genus $2$ fibred knot or a genus $1$ knot. In Section~\ref{sect:T52}, we finish the proof of Theorem~\ref{thm:T52} by applying the results obtained in  the early sections. The proof also
 applies a result of Lackenby and Meyerhoff \cite{LM}.

\vspace{5pt}\noindent{\bf Acknowledgements.}\quad The first author was
partially supported by an AIM Five-Year Fellowship, NSF grant
number DMS-1103976 and an Alfred P. Sloan Research Fellowship.  We are grateful to the organizers of the ``Workshop on Topics in Dehn Surgery''
at University of Texas at Austin for making our collaboration possible. We wish to thank John Baldwin for a conversation which made us realize a mistake in an earlier version of this paper.

\section{Proof of Theorem~\ref{thm:TorusKnot}}\label{sect:TorusKnot}

Throughout this section, $T_{r,s}$ is the torus knot given in Theorem~\ref{thm:TorusKnot} with $r>s>1$.
In order to prove Theorem~\ref{thm:TorusKnot}, we need a little bit knowledge on Heegaard Floer homology.
Recall that a rational homology sphere $Y$ is an {\it L-space} if the rank of its Heegaard Floer homology $\widehat{HF}(Y)$ is equal to the order of $H_1(Y;\mathbb Z)$. For example, lens spaces are L-spaces. If a knot $K\subset S^3$ admits an L-space surgery with positive slope, then $S^3_{p/q}(K)$ is an L-space if and only if $\frac pq\ge2g(K)-1$ \cite{OSzRatSurg}.

\begin{prop}\label{prop:GenusBound}
Suppose that for a knot $K\subset S^3$ and  a slope $\frac pq\ge rs-r-s$,
 we have $$S^3_{p/q}(K)\cong S^3_{p/q}(T_{r,s}).$$
Then $K$ is a fibred knot and $$2g(K)-1\le\frac{(r^2-1)(s^2-1)}{24}.$$
\end{prop}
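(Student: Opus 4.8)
The goal is, under the surgery hypothesis $S^3_{p/q}(K)\cong S^3_{p/q}(T_{r,s})$ with $\frac pq\ge rs-r-s=2g(T_{r,s})-1$, to show $K$ is fibred and $2g(K)-1\le\frac{(r^2-1)(s^2-1)}{24}$. The key observation is that $T_{r,s}$ admits a positive lens-space (hence L-space) surgery, so for $\frac pq\ge 2g(T_{r,s})-1=rs-r-s$ the manifold $S^3_{p/q}(T_{r,s})$ is an L-space by the Ozsv\'ath--Szab\'o characterization quoted just above the statement. Hence $S^3_{p/q}(K)$ is an L-space as well. The plan is to run the L-space surgery theory backwards on $K$: an L-space surgery forces strong restrictions on $\widehat{HFK}(K)$ and on $\Delta_K(t)$.

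\emph{Step 1: $K$ is fibred.} Since $S^3_{p/q}(K)$ is an L-space with $p/q>0$, $K$ is an L-space knot. By the structure theorem for L-space knots (Ozsv\'ath--Szab\'o), the Alexander polynomial of $K$ has the form $\Delta_K(t)=(-1)^m+\sum_{j=1}^m(-1)^{m-j}(t^{n_j}+t^{-n_j})$ with $0<n_1<\cdots<n_m=g(K)$, and in particular the top coefficient of $\Delta_K$ is $\pm1$ and $\widehat{HFK}(K,g(K))\cong\mathbb Z$. By the fibredness detection theorem of Ghiggini and Ni, $K$ is fibred of genus $g(K)$. (Alternatively, this is already part of the standard L-space-knot package.)

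\emph{Step 2: the genus bound from the rational surgery formula / Casson--Walker.} Now I must extract the numerical bound. The cleanest route is via the Casson--Walker invariant (or, equivalently, the rank of $HF_{\mathrm{red}}$, which vanishes here since both sides are L-spaces, combined with the $d$-invariant / correction-term count). Since $S^3_{p/q}(K)\cong S^3_{p/q}(T_{r,s})$ as oriented manifolds, all homeomorphism invariants agree; in particular the Casson--Walker invariant $\lambda$ agrees. The surgery formula for $\lambda$ gives $\lambda(S^3_{p/q}(K))=\frac q{2p}\Delta_K''(1)+(\text{a term depending only on }p/q)$, so $\Delta_K''(1)=\Delta_{T_{r,s}}''(1)$. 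For a torus knot one computes $\Delta_{T_{r,s}}''(1)=\frac{(r^2-1)(s^2-1)}{12}$. On the other hand, for any L-space knot the form of $\Delta_K$ in Step 1 forces $\Delta_K''(1)\ge 2g(K)-1$ (each symmetric pair $t^{n_j}+t^{-n_j}$ contributes $2n_j^2$ with alternating signs, and a short convexity/telescoping argument on the gap sequence $0<n_1<\cdots<n_m=g$ shows the alternating sum is $\ge 2g-1$, with equality for $T_{2,2g+1}$). Combining, $2g(K)-1\le\Delta_K''(1)=\frac{(r^2-1)(s^2-1)}{12}$. To land the stated bound with the extra factor of $2$ in the denominator, one uses instead the sharper inequality $\Delta_K''(1)\ge 2(2g(K)-1)$ for L-space knots — indeed the smallest value of $\sum(-1)^{m-j}2n_j^2$ over admissible gap sequences with $n_m=g$, $m\ge1$, actually exceeds $2g-1$ as soon as $g\ge1$; running the extremal analysis carefully (the minimizer is the ``staircase'' $n_j=j$, giving $\Delta_K''(1)=2g^2-\cdots$) yields $\Delta_K''(1)\ge 2(2g(K)-1)$, hence $2g(K)-1\le\frac{(r^2-1)(s^2-1)}{24}$.

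\emph{Main obstacle.} The topological inputs (Step 1, and passing L-spaceness from one side of the surgery to the other) are standard and quick. The real work is the combinatorial extremal estimate in Step 2: pinning down the exact constant by minimizing $\Delta_K''(1)$ over all admissible Alexander polynomials of L-space knots of genus $g$ and checking that the minimum is $\ge 2(2g-1)$ — equivalently, identifying that the ``worst case'' is the $(2,2g+1)$-torus-knot staircase and reading off its $\Delta''(1)$. I would organize this as a self-contained lemma about gap sequences $0<n_1<\cdots<n_m$, proving $\sum_{j=1}^m(-1)^{m-j}n_j^2\ge \frac{2g-1}{2}$ by induction on $m$ (replacing a non-staircase sequence by a lexicographically smaller one without increasing the alternating sum). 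The one subtlety to handle with care is the normalization/sign conventions in the Casson--Walker surgery formula so that the $T_{r,s}$ value $\frac{(r^2-1)(s^2-1)}{12}$ comes out with the correct sign and the $p/q$-dependent terms genuinely cancel between $K$ and $T_{r,s}$; here it is safest to invoke the rational surgery formula for $HF^+$ (Proposition stated later as \ref{prop:CompRed}, specialized to the L-space case) rather than a classical formula, since both surgered manifolds are literally identified.
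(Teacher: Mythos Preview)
Your approach is essentially identical to the paper's: pass L-spaceness from $T_{r,s}$ to $K$, invoke the Ozsv\'ath--Szab\'o form of $\Delta_K$ and Ni's fibredness criterion, then equate $\Delta_K''(1)=\Delta_{T_{r,s}}''(1)=\frac{(r^2-1)(s^2-1)}{12}$ via the Casson--Walker surgery formula and bound $\Delta_K''(1)$ from below in terms of $g(K)$.

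The only defect is in your handling of the elementary inequality, where you tie yourself in knots. You correctly note that each $T^{n_j}+T^{-n_j}$ contributes $2n_j^2$ to $\Delta_K''(1)$, so $\Delta_K''(1)=2\sum_{j=1}^m(-1)^{m-j}n_j^2$; and your parenthetical ``the alternating sum is $\ge 2g-1$'' is exactly right. But then you drop the factor of $2$ when assembling, write the weak bound $\Delta_K''(1)\ge 2g(K)-1$, and launch into an unnecessary ``sharper inequality'' discussion. There is no missing factor to recover: $\Delta_K''(1)=2\cdot(\text{alternating sum})\ge 2(2g-1)$ already. The paper dispatches the alternating-sum bound in one line: since $n_{k-1}\le n_k-1$ one has $n_k^2-n_{k-1}^2\ge 2n_k-1$, and each remaining pair $n_{k-2j}^2-n_{k-2j-1}^2$ (plus a possible leftover $n_1^2$) is positive. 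No induction, no extremal analysis. Incidentally, your claim that the staircase $n_j=j$ (equivalently $T_{2,2g+1}$) is the minimizer is false for $g\ge3$; the formal minimum $2g-1$ is achieved by the two-term sequence $(g-1,g)$.
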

\begin{proof}
Since $\frac pq\ge rs-r-s=2g(T_{r,s})-1$, $S^3_{p/q}(T_{r,s})$ is an L-space. As $K$ admits an L-space surgery, according to \cite{OSzLens} its Alexander polynomial should be
$$\Delta_K(T)=(-1)^k+\sum_{i=1}^k(-1)^{k-i}(T^{n_i}+T^{-n_i})$$
for some positive integers $0<n_1<n_2<\cdots<n_k=g(K)$. Moreover, $K$ is fibred \cite{NiFibred}. We can compute
$$\Delta''_K(1)=2\sum_{i=1}^k(-1)^{k-i}n_i^2\ge2(2n_k-1)=2(2g(K)-1).$$

Let $\lambda(Y)$ be the Casson--Walker invariant \cite{Walker} for a rational homology sphere $Y$, normalized so that $\lambda(S^3_{+1}(T_{3,2}))=1$. The surgery formula
$$\lambda(Y_{p/q}(L))=\lambda(Y)+\lambda(L(p,q))+\frac{q}{2p}\Delta_L''(1)$$
is well-known, where $L\subset Y$ is a null-homologous knot. Applying the formula to our case, we conclude that
\begin{equation}\label{eq:Delta''}
\Delta''_{K}(1)=\Delta''_{T_{r,s}}(1),
\end{equation}
hence $2(2g(K)-1)\le \Delta''_{T_{r,s}}(1)=\frac{(r^2-1)(s^2-1)}{12}$. We get our conclusion.
\end{proof}

\begin{lem}\label{lem:pBound}
Suppose that $K$ is a hyperbolic knot in $S^3$ and $S^3_{p/q}(K)$ is not a hyperbolic manifold, then $$|p|\le\frac{36}{3.35}(2g(K)-1)<10.75(2g(K)-1).$$
\end{lem}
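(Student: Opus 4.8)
The plan is to combine the $6$--Theorem with two pieces of cusp geometry for the hyperbolic manifold $M=S^3\setminus K$: an upper bound, in terms of $g(K)$, for the length of the longitude, and a universal lower bound for the area of the maximal cusp.

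Since $K$ is hyperbolic, $M$ is a one--cusped orientable hyperbolic $3$--manifold. Expand the cusp to the maximal embedded horocusp and let $T$ be its boundary torus with the induced Euclidean metric; write $T\cong\mathbb R^2/\Lambda$, where $\Lambda$ has basis $v_\mu,v_\lambda$ given by the translations along the meridian $\mu$ and the longitude $\lambda$. Put $A=\mathrm{Area}(T)=|v_\mu\times v_\lambda|$ (here $u\times w$ denotes the scalar $\det(u,w)$ on $\mathbb R^2$), and for a slope $\tfrac pq$ let $\ell(\tfrac pq)=|pv_\mu+qv_\lambda|$ be its length on $T$, so that $\ell(\lambda)=|v_\lambda|$. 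By the $6$--Theorem of Agol \cite{Agol} and Lackenby \cite{Lack}, if $\ell(\tfrac pq)>6$ then $S^3_{p/q}(K)$ is irreducible, atoroidal and not Seifert fibred, with word--hyperbolic fundamental group, hence hyperbolic by geometrization; so the hypothesis that $S^3_{p/q}(K)$ is not hyperbolic forces $\ell(\tfrac pq)\le 6$.

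Next I would use the elementary planar inequality $|p|\,A=|(pv_\mu+qv_\lambda)\times v_\lambda|\le |pv_\mu+qv_\lambda|\cdot|v_\lambda|=\ell(\tfrac pq)\,\ell(\lambda)\le 6\,\ell(\lambda)$, so that $|p|\le 6\,\ell(\lambda)/A$. To bound $\ell(\lambda)$, take a minimal--genus Seifert surface $S$ for $K$: it is incompressible and $\partial$--incompressible, has a single boundary curve, of slope $\lambda$, and $|\chi(S)|=2g(K)-1$; the length estimate for boundary slopes of essential surfaces that underlies the $6$--Theorem (Agol \cite{Agol}) then gives $\ell(\lambda)\le 6\,|\chi(S)|=6(2g(K)-1)$. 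To bound $A$ from below, invoke the known universal lower bound $A\ge 3.35$ for the area of the maximal cusp of a one--cusped orientable hyperbolic $3$--manifold. Combining the three inequalities,
$$|p|\ \le\ \frac{6\cdot 6(2g(K)-1)}{3.35}\ =\ \frac{36}{3.35}\,(2g(K)-1)\ <\ 10.75\,(2g(K)-1),$$
which is the assertion. (The degenerate cases $p=0$ or $q=0$ are trivial, since then $|p|\le 1\le 2g(K)-1$.)

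The point requiring care is the numerology: the two occurrences of ``$6$'' — one from the $6$--Theorem, one from the boundary--slope length estimate applied to the Seifert surface — together with the sharp cusp--area constant $3.35$ are exactly what produce the coefficient $\tfrac{36}{3.35}$, so the argument must use the optimal form of each of these inputs (rather than, say, the weaker universal cusp--area bound $\sqrt3$). Beyond that, the only things to verify are the standard facts that a minimal--genus Seifert surface is essential (so Agol's estimate applies), that its boundary slope is the longitude, and that the hypotheses and conclusion of the $6$--Theorem are precisely what is used here.
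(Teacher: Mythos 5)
Your proposal is correct and follows essentially the same route as the paper: the 6--Theorem to get $\ell(p/q)\le 6$, Agol's length bound for the boundary of a minimal-genus Seifert surface to get $\ell(\lambda)\le 6(2g(K)-1)$, the Cao--Meyerhoff bound $\mathrm{Area}(\partial C)\ge 3.35$, and the intersection-number identity $|p|\cdot\mathrm{Area}(\partial C)\le \ell(p/q)\,\ell(\lambda)$ (which the paper writes with $\sin\theta$). No gaps.
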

\begin{proof}
Following \cite{Agol}, let $C$ be the maximal horocusp of $S^3-K$ with embedded interior. Note that
  $\partial C$ is a Euclidean torus. For any slope $\alpha$ on $\partial C$, let $l_C(\alpha)$ be the Euclidean length of the geodesic loop in the homology class of $\alpha$ in $\p C$. Let $\lambda\subset \p C$ be the canonical longitude of $K$, then it follows from \cite[Theorem~5.1]{Agol} that $$l_C(\lambda)\le 6(2g(K)-1).$$

Since  $S^3_{p/q}(K)$ is not hyperbolic, the 6--Theorem \cite{Lack,Agol} implies that $l_C(\frac pq)\le 6$ (Note that by the geometrization theorem due to Perelman \cite{P1, P2}, for closed $3$-manifolds, the term  hyperbolike, as used in \cite{Agol}, is equivalent to hyperbolic).
Let $\theta$ be the angle between the two geodesic loops in the homology classes $\lambda,\frac pq$.
As in the proof of \cite[Theorem~8.1]{Agol}
$$|p|=\Delta(\lambda,\frac pq)=\frac{l_C(\lambda)l_C(\frac pq)\sin\theta}{\mathrm{Area}(\partial C)}\le\frac{l_C(\lambda)l_C(\frac pq)}{\mathrm{Area}(\partial C)}\le\frac{36(2g(K)-1)}{3.35},$$
where we use Cao and Meyerhoof's result \cite{CM} that $\mathrm{Area}(\partial C)\ge 3.35$.
\end{proof}

\begin{cor}\label{cor:Hyp}
If $S^3_{p/q}(K)\cong S^3_{p/q}(T_{r,s})$ holds for a knot $K$ and $|\frac pq|>\frac{30(r^2-1)(s^2-1)}{67}$, then $K$ is not hyperbolic.
\end{cor}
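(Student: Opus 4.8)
The plan is to combine the genus bound coming from Proposition~\ref{prop:GenusBound} with the surgery obstruction of Lemma~\ref{lem:pBound}, and then treat the non-hyperbolic case of $K$ by contradiction. First I would observe that the hypothesis $|\frac pq|>\frac{30(r^2-1)(s^2-1)}{67}$ forces $|p|\ge|\frac pq|$ (since $|q|\ge 1$), so in particular $|p|>\frac{30(r^2-1)(s^2-1)}{67}$, and also $\frac pq>2g(T_{r,s})-1=rs-r-s$ once $r,s$ are at least moderately large — this last inequality needs a small check, but it is elementary because $\frac{30(r^2-1)(s^2-1)}{67}$ grows quadratically in $rs$ while $rs-r-s$ is essentially linear. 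Hence Proposition~\ref{prop:GenusBound} applies and yields $2g(K)-1\le\frac{(r^2-1)(s^2-1)}{24}$.

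Next I would argue by contradiction: suppose $K$ is hyperbolic. Since $S^3_{p/q}(K)\cong S^3_{p/q}(T_{r,s})$ and the latter is a Seifert fibred space (indeed an L-space, in fact a connected sum of lens space pieces / small Seifert space as classified by Moser), it is not hyperbolic; therefore $S^3_{p/q}(K)$ is not hyperbolic. Then Lemma~\ref{lem:pBound} gives $|p|\le\frac{36}{3.35}(2g(K)-1)$. Chaining this with the genus bound from the previous paragraph produces
\[
|p|\le\frac{36}{3.35}\cdot\frac{(r^2-1)(s^2-1)}{24}=\frac{36}{3.35\cdot 24}(r^2-1)(s^2-1)=\frac{3}{6.7}(r^2-1)(s^2-1)=\frac{30(r^2-1)(s^2-1)}{67}.
\]
This contradicts $|p|>\frac{30(r^2-1)(s^2-1)}{67}$, so $K$ cannot be hyperbolic, which is exactly the claim.

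The only genuine subtlety — and the step I would be most careful about — is verifying that the hypothesis really does put us in the regime where Proposition~\ref{prop:GenusBound} is applicable, i.e.\ that $\frac{30(r^2-1)(s^2-1)}{67}\ge rs-r-s$ for all $r>s>1$. For the smallest case $(r,s)=(3,2)$ one has $\frac{30\cdot 8\cdot 3}{67}=\frac{720}{67}>10.7$ while $rs-r-s=1$, and the left side grows like $\tfrac{30}{67}r^2s^2$ whereas the right side is bounded by $rs$, so the inequality holds comfortably in every case; I would dispatch this with a one-line estimate. Everything else is just arithmetic bookkeeping, taking care that the constant $\frac{36}{3.35\cdot 24}$ simplifies exactly to $\frac{30}{67}$ so that the two inequalities are genuinely in tension rather than merely close.
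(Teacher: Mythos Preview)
Your proposal is correct and follows essentially the same route as the paper: apply Proposition~\ref{prop:GenusBound} to bound $2g(K)-1$, then feed that bound into Lemma~\ref{lem:pBound} to force $|p|\le\frac{30(r^2-1)(s^2-1)}{67}$, contradicting the hypothesis. The paper dispatches the check that $\frac{30(r^2-1)(s^2-1)}{67}>rs-r-s$ slightly more cleanly via the chain $\frac{30(r^2-1)(s^2-1)}{67}>(r-1)(s-1)>rs-r-s$, and it leaves implicit (as you make explicit) that $S^3_{p/q}(T_{r,s})$ is non-hyperbolic; otherwise the arguments are identical.
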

\begin{proof}
Since $r>s>1$, we have $\frac pq>\frac{30(r^2-1)(s^2-1)}{67}>(r-1)(s-1)>rs-r-s$. By Proposition~\ref{prop:GenusBound} we have $2g(K)-1\le\frac{(r^2-1)(s^2-1)}{24}$. If $K$ is hyperbolic, Lemma~\ref{lem:pBound} implies that
$$|\frac pq|\le|p|\le\frac{36}{3.35}\cdot\frac{(r^2-1)(s^2-1)}{24}=\frac{30(r^2-1)(s^2-1)}{67},$$
a contradiction.
\end{proof}

\begin{prop}\label{prop:Tmn}
Suppose that for a general torus knot $T_{m,n}\subset S^3$, we have
 $S^3_{p/q}(T_{m,n})\cong S^3_{p/q}(T_{r,s})$ for a slope $\frac pq\notin\{rs\pm1,rs\pm\frac12\}$.
 Then $T_{m,n}=T_{r,s}$.
\end{prop}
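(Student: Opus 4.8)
The plan is to use the fact that surgeries on torus knots are very well understood (by Moser), so that the homeomorphism $S^3_{p/q}(T_{m,n})\cong S^3_{p/q}(T_{r,s})$ severely restricts the possibilities for $(m,n)$. First I would invoke Moser's classification: for a torus knot $T_{a,b}$, the manifold $S^3_{p/q}(T_{a,b})$ is a lens space when $|p-q ab|=1$, a connected sum of two lens spaces when $p=q ab$, and otherwise a Seifert fibred space over $S^2$ with exactly three exceptional fibres of orders $a$, $b$, and $|p-q ab|$. Since the hypothesis excludes $\frac pq\in\{rs\pm 1, rs\pm\tfrac12\}$, i.e.\ excludes $|p-q\cdot rs|\le 1$ when $q=1$ and the reducible case, the manifold $Y:=S^3_{p/q}(T_{r,s})$ is a ``genuine'' small Seifert fibred space: its base orbifold is $S^2(r,s,|p-q rs|)$ with all three cone orders at least $2$, and in fact (as $\frac pq$ is large and positive, $r>s>1$) at least $(r,s,\text{large})$, so this is a hyperbolic-base orbifold and $Y$ has a unique Seifert fibration up to isotopy.

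Next I would extract the base-orbifold data. From $Y\cong S^3_{p/q}(T_{m,n})$ and the uniqueness of the Seifert structure, the multiset of cone point orders $\{m,n,|p-q mn|\}$ must equal $\{r,s,|p-q rs|\}$. I'd also pin down the actual surgery slope: since both descriptions use the \emph{same} $\frac pq$, and the core of the surgery solid torus is a fibre in each case, comparing the Seifert invariants (e.g.\ via $|H_1|=|p|$ and the Euler number / rational longitude data) forces the numerical match. Now there are three cases for how $\{m,n\}$ sits inside $\{r,s,|p-q rs|\}$: either $\{m,n\}=\{r,s\}$ (done immediately, and then $|p-qmn|=|p-qrs|$ is automatic), or one of $m,n$ equals $|p-q rs|$ while the other equals $r$ or $s$. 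In the latter cases I'd derive a Diophantine constraint: e.g.\ if $\{m,n\}=\{r,\,|p-q rs|\}$ then $|p-q mn|\in\{s,\,|p-q rs|\}$ — but $|p-q mn|=|p - q r|p-q rs||$, which for $\frac pq>\frac{30(r^2-1)(s^2-1)}{67}$ is far too large to equal $s$, and equal to $|p-qrs|$ only in degenerate configurations one can rule out by size. This is where the lower bound on the slope (inherited from Theorem~\ref{thm:TorusKnot}'s hypotheses, or argued directly) does the work.

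The main obstacle I anticipate is the bookkeeping in the mixed cases: one must be careful that ``the same $\frac pq$'' genuinely matches on both sides, since a priori a Seifert fibred space can be realized as surgery on a torus knot with different slope labels if one is cavalier about framings; handling this cleanly requires tracking the Seifert invariants (or equivalently, $H_1$ together with the linking form, or the Casson--Walker invariant via \eqref{eq:Delta''}) and not just the underlying manifold. A secondary subtlety is the edge case where $|p-qrs|$ happens to be small — but the hypothesis $\frac pq\notin\{rs\pm 1, rs\pm\tfrac12\}$ combined with $q\ge 1$ keeps $|p-qrs|\ge 2$, and when $q\ge 2$ it is automatically at least $2$ as well, so the base orbifold is always a genuine triangle orbifold with the rigidity we need. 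Once the base-orbifold multiset is matched and the size estimate kills the mixed cases, the conclusion $T_{m,n}=T_{r,s}$ follows (up to the standard $T_{a,b}=T_{b,a}$ identification).
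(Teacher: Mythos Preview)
Your approach via Moser's classification and matching base-orbifold cone points is the same starting point as the paper's, but there is a genuine gap: you import a large-slope hypothesis (``as $\frac pq$ is large and positive'', ``the lower bound on the slope inherited from Theorem~\ref{thm:TorusKnot}'') that is \emph{not} part of the proposition. Proposition~\ref{prop:Tmn} is stated for \emph{every} slope outside the four-element set $\{rs\pm1,rs\pm\tfrac12\}$, and your size estimate ``$|p-qr|p-qrs||$ is far too large to equal $s$'' simply fails when $\frac pq$ is close to $rs$. Relatedly, your claim that the hypothesis excludes the reducible case is wrong ($\frac pq=rs$ is not in the excluded set), and the lens-space case with $|p-qrs|=1$, $q\ge3$ is also not excluded; you must treat these separately rather than assume $Y$ is always a genuine three-cone-point Seifert space.

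The ingredient you are missing, and which the paper uses, is the Casson--Walker identity \eqref{eq:Delta''}: from $S^3_{p/q}(T_{m,n})\cong S^3_{p/q}(T_{r,s})$ one gets $\Delta''_{T_{m,n}}(1)=\Delta''_{T_{r,s}}(1)$, i.e.\ $(m^2-1)(n^2-1)=(r^2-1)(s^2-1)$. This single equation replaces your entire ``mixed-case'' size analysis: once the cone-point multisets force $\{r,s\}\cap\{|m|,|n|\}\ne\emptyset$, say $r=|m|$, the identity immediately gives $s=|n|$, with no assumption on the size of $\frac pq$. The reducible and lens-space cases are then handled by the same identity together with $rs=mn$ (reducible) or $|p-qrs|=|p-qmn|=1$ (lens space), and it is precisely in the lens-space analysis that the excluded slopes $rs\pm1,\,rs\pm\tfrac12$ arise (from $rsq=mnq\pm2$). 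You should restructure your argument around \eqref{eq:Delta''} rather than around a slope bound you do not have.
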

\begin{proof}
 By (\ref{eq:Delta''}) we have
\begin{equation}\label{eq:rsmn}
(r^2-1)(s^2-1)=(m^2-1)(n^2-1).
\end{equation}

If the  manifold $S^3_{p/q}(T_{m,n})\cong S^3_{p/q}(T_{r,s})$ is reducible,
then by \cite{Moser} the slope $p/q$ is $rs=mn$. Using (\ref{eq:rsmn}) we easily see that $T_{m,n}=T_{r,s}$.

If the  manifold $S^3_{p/q}(T_{m,n})\cong S^3_{p/q}(T_{r,s})$ is a lens space, it follows from \cite{Moser} that the slope $\frac pq$ satisfies
\begin{equation}\label{eq:Dist}
\Delta(\frac pq,rs)=|p-rsq|=\Delta(\frac pq,mn)=|p-mnq|=1
\end{equation}
From  (\ref{eq:Dist}), we have $rsq=mnq$ or $rsq=mnq\pm2$. If $rs=mn$, then using (\ref{eq:rsmn}) we get $T_{m,n}=T_{r,s}$. If $rsq=mnq\pm2$, then $|q|=1,2$ and $p=rs\pm\frac1q$.

So by \cite{Moser}, we may now assume that the  manifold $S^3_{p/q}(T_{m,n})\cong S^3_{p/q}(T_{r,s})$
is a Seifert fibred space whose base orbifold is $S^2$ with cone points, and the orders of the three
cone points  are
$$\{r,s,|p-rsq|\}=\{|m|,|n|,|p-mnq|\},$$
so $\{r,s\}\cap\{|m|,|n|\}\ne\emptyset$.
Without loss of generality, we may assume $r=|m|$, then $s=|n|$ by (\ref{eq:rsmn}). So $|p-rsq|=|p-mnq|$. If $T_{m,n}\ne T_{r,s}$, then $mn=-rs$ and so  we must have $p=0$. But clearly $S^3_0(T_{r,s})\ncong S^3_0(T_{r,-s})$. Hence  $T_{m,n}$ must be $T_{r,s}$.
\end{proof}

\begin{prop}\label{prop:Sat}
If $S^3_{p/q}(K)\cong S^3_{p/q}(T_{r,s})$ holds for a knot $K$
 and $$\frac pq>rs+\frac37\max\{r,s\}  \text{ with either } |p|>\frac{30(r^2-1)(s^2-1)}{67} \text{ or } |q|\ge3,$$ then $K$ is not a satellite knot.
\end{prop}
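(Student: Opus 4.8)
The plan is to rule out the satellite-knot case by analyzing the JSJ decomposition of the surgered manifold and using the fact that, under the hypothesis, the surgery slope is very large relative to $g(K)$. First I would recall the structure theory: if $K$ is a satellite knot, then the exterior $M_K = S^3 \setminus K$ contains an essential torus, and there is an innermost companion solid torus $V$ with core $K' $ and winding number $w$, so that $K$ lies inside $V$ as a knot that is not contained in a ball and is not the core. The key numerical input is the genus formula for satellites: $g(K) \geq g(\text{pattern in } V) + w\cdot g(K')$, and in particular, since the pattern is nontrivial in $V$, one has $w \le $ (some multiple of) $g(K)$ and also $g(K) \ge $ something forcing $2g(K)-1$ not too small. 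Combined with Proposition~\ref{prop:GenusBound}, which gives $2g(K)-1 \le \frac{(r^2-1)(s^2-1)}{24}$, we already have strong control on $g(K)$, hence on the winding number $w$ of $K$ in its innermost companion.

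**Next**, I would examine what the surgered manifold $S^3_{p/q}(K)$ looks like when $K$ is a satellite. The surgery is performed inside $V$, so $S^3_{p/q}(K) = (S^3 \setminus \mathrm{int}\, V) \cup V_{p/q}(K)$ where $V_{p/q}(K)$ denotes the result of the surgery inside the solid torus. Since $S^3_{p/q}(K) \cong S^3_{p/q}(T_{r,s})$ and the latter is a Seifert fibred space over $S^2$ with at most three exceptional fibres (or a lens space, or reducible) by \cite{Moser}, the manifold is "small" — its JSJ decomposition is essentially trivial. This forces $V_{p/q}(K)$ to be either a solid torus or itself Seifert fibred in a compatible way. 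The case $V_{p/q}(K) \cong$ solid torus is exactly the situation analyzed by Gabai and Berge: by Gabai's theorem on surgeries on knots in solid torus yielding solid tori, $K$ is either a $0$- or $1$-bridge braid in $V$, and in particular $w \ge 2$; moreover the surgery slope, measured in $S^3$, satisfies $p/q = w^2 \cdot (\text{slope inside } V) \pm$ correction, which ties $|p - w^2\, mn q|$ or similar to a bounded quantity. This is where the hypothesis $\frac pq > rs + \frac37\max\{r,s\}$, together with the side condition on $|p|$ or $|q|$, should produce the contradiction: the slope is simultaneously forced to be large and forced (via the solid-torus surgery description together with the Seifert structure of $S^3_{p/q}(T_{r,s})$) to satisfy a strong arithmetic constraint of the type $\Delta(p/q, w^2 a/b) \le$ bounded, which upon comparison with $g(K)$-bounds becomes impossible.

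**The remaining case** is that $V_{p/q}(K)$ is Seifert fibred but not a solid torus; then the essential torus $\partial V$ survives (or becomes compressible only after being swallowed into a Seifert piece), so $S^3_{p/q}(K)$ has a Seifert structure matching the one on $S^3_{p/q}(T_{r,s})$. Here I would use the classification of Dehn surgeries on knots in solid tori that produce Seifert-fibred spaces, together with the fact that the Seifert structure on $S^3_{p/q}(T_{r,s})$ has very specific multiplicities $\{r, s, |p - rsq|\}$. Matching multiplicities forces $w$ and the pattern's Seifert data into a rigid form, and again the inequality $\frac pq > rs + \frac37\max\{r,s\}$ kills the remaining possibilities — intuitively, $|p - rsq| = |q|(\frac pq - rs) > \frac37 |q|\max\{r,s\} \ge \frac37\max\{r,s\}$ so the third multiplicity is bounded below, which conflicts with the multiplicity coming from a cable/Seifert pattern of winding number $w \ge 2$ unless the data degenerates. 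One also has to separately treat the degenerate subcase $q = 1$ where the side condition demands $|p|$ large, which is handled by the $|p| > \frac{30(r^2-1)(s^2-1)}{67}$ clause via Proposition~\ref{prop:GenusBound} and Lemma~\ref{lem:pBound}-type estimates.

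**The main obstacle** I expect is the careful bookkeeping in the Seifert-fibred subcase: one must verify that no choice of winding number $w \ge 2$, no choice of the inner surgery slope, and no choice of companion $K'$ can reproduce both the Seifert invariants of $S^3_{p/q}(T_{r,s})$ and satisfy the genus bound from Proposition~\ref{prop:GenusBound}. Pinning down exactly where the constant $\frac37$ enters — it presumably comes from comparing $w^2 \ge 4$ against the ratio of $\max\{r,s\}$ to $rs$, i.e. from $\frac1{\min\{r,s\}}$-type terms, or from a bound like $w \le \frac{3}{7}$-something — requires threading the genus inequality, Gabai's $1$-bridge-braid structure, and Moser's classification together precisely. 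The solid-torus case, by contrast, should be relatively clean once Gabai's theorem is invoked.
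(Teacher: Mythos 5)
Your reduction to the solid--torus case is essentially the paper's: since $S^3_{p/q}(T_{r,s})$ is irreducible and atoroidal for $\frac pq>rs+1$, Gabai's theorem forces $V_{p/q}(K)$ to be a solid torus with $K$ a braid of winding number $w\ge2$ in $V$, so that $S^3_{p/q}(K)\cong S^3_{p/(qw^2)}(K')$ for the innermost companion $K'$. (Your separate ``$V_{p/q}(K)$ Seifert fibred but not a solid torus'' case is spurious: if $\partial V$ stayed incompressible the result would be toroidal, and the only other output of Gabai's theorem is a reducible manifold, both excluded; no classification of Seifert surgeries on knots in solid tori is needed.) The genuine gap is that you never confront the dichotomy for the companion: by innermost-ness $K'$ is either a torus knot or \emph{hyperbolic}, and your plan only addresses the torus-knot alternative (matching Seifert multiplicities $\{r,s,p-qrs\}=\{m,n,|p-qmnw^2|\}$). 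The hyperbolic-companion case is exactly where the hypothesis ``$|p|>\frac{30(r^2-1)(s^2-1)}{67}$ or $|q|\ge3$'' is consumed: $S^3_{p/(qw^2)}(K')$ is a non-hyperbolic surgery on a hyperbolic knot, so either $|p|$ is large and Lemma~\ref{lem:pBound} (applied to $K'$, using $g(K')\le g(K)$ and Proposition~\ref{prop:GenusBound}) gives $p\le\frac{30(r^2-1)(s^2-1)}{67}$, a contradiction with the first clause; or one falls back on $|q|\ge3$, whence the denominator $qw^2\ge12$ exceeds the Lackenby--Meyerhoff bound on denominators of exceptional slopes, again a contradiction. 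You instead attribute the $|p|$-clause to a ``degenerate subcase $q=1$,'' which misses the point; without the hyperbolic-companion argument the proof does not close.

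A secondary issue: in the torus-companion case you gesture at ``matching multiplicities kills the remaining possibilities'' but do not do the arithmetic that actually produces the constant $\frac37$. After discarding $\{r,s\}=\{m,n\}$ (which would give $S^3_{p/q}(T_{r,s})\cong S^3_{p/(qw^2)}(T_{r,s})$, impossible since these have different correction-term data), one is forced into $m=r$, $n=p-qrs$, $|p-qmnw^2|=s$, and solving $p-qr(p-qrs)w^2=\pm s$ yields
$$p=\frac{sq^2r^2w^2\mp s}{qrw^2-1}\le sqr+\tfrac37 s,$$
using $r\ge2$, $w\ge2$; this is what contradicts $\frac pq>rs+\frac37\max\{r,s\}$. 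Your heuristic for where $\frac37$ comes from ($w^2\ge4$ versus $qr\ge2$) is in the right direction, but the step from ``rigid Seifert data'' to this explicit inequality is the content of the case and is absent from your plan.
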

\begin{proof}
We may assume $p,q>0$.
As $\frac pq>rs+1$, $S^3_{p/q}(T_{r,s})$ is a Seifert fibred space  whose base orbifold is $S^2$ with three cone points of orders $r,s,  p-qrs$.

If $K$ is a satellite knot, let $R\subset S^3-K$ be the ``innermost'' essential torus, then $R$ bounds a solid torus $V\supset K$ in $S^3$. Let $K'$ be the core of $V$. The ``innermost'' condition means that $K'$ is not satellite, so it is either hyperbolic or a torus knot.
Note that $S^3_{p/q}(T_{r,s})$ is irreducible and does not contain incompressible torus.
So by \cite{GSurg} we know that $V_{p/q}(K)$ (the $p/q$-surgery on $V$ along $K$)
is a solid torus, $K$ is a braid in $V$, and  $S^3_{p/q}(K)\cong S^3_{p/(qw^2)}(K')$, where $w>1$ is the winding number of $K$ in $V$ and $\gcd(p,qw^2)=1$.

If $K'=T_{m,n}$ is a torus knot, since $K'$ admits a positive L-space surgery, we can choose $m,n>0$. Then
$S^3_{p/(qw^2)}(K')$ is a Seifert fibred space  whose base orbifold is $S^2$ with three cone points of orders $m,n,|p-qmnw^2|$. Now that $\{r,s,  p-qrs \}=\{m,n, |p-qmnw^2|\}$ and $w>1$. If $\{r,s\}=\{m,n\}$, then the $S^3_{p/q}(T_{r,s})\cong S^3_{p/(qw^2)}(T_{r,s})$, which is not possible.
So we may assume $m=r$, $n=p-qrs$ and $|p-qmnw^2|=s$. We get
$$p-qr(p-qrs)w^2=\pm s,$$
hence $$p=\frac{sq^2r^2w^2\mp s}{qrw^2-1}\le s(qr+\frac{qr+1}{4qr-1})\le sqr+\frac{3}7s,$$
where we use the fact that $r\ge2,w\ge2$. This contradicts the assumption that $\frac pq>rs+\frac37\max\{r,s\}$.

If $K'$ is hyperbolic, then $S^3_{p/(qw^2)}(K')$ is a non-hyperbolic  surgery.
Note that $g(K')\leq g(K)$. The argument in Corollary~\ref{cor:Hyp} implies that $p\le \frac{30(r^2-1)(s^2-1)}{67}$. Hence we should have $q\ge3$, then $qw^2\ge12$. It follows from \cite{LM} that $S^3_{p/(qw^2)}(K')$ is hyperbolic, a contradiction.
\end{proof}

\begin{proof}[Proof of Theorem~\ref{thm:TorusKnot}]Suppose that for a knot $K\subset S^3$
we have $S^3_{p/q}(K)=S^3_{p/q}(T_{r,s})$ for some slope $\frac pq>\frac{30(r^2-1)(s^2-1)}{67}$.
Note that a knot $K\subset S^3$ is either a hyperbolic knot, or a torus knot, or a satellite knot.
 Corollary~\ref{cor:Hyp} implies that $K$ is not hyperbolic. By \cite{OSz3141}, we may assume $T_{r,s}\ne T_{3,2}$, so $(r-1)(s-1)\ge4$. So $\frac pq>\frac{30(r^2-1)(s^2-1)}{67}>(r+1)(s+1)>rs+\frac37\max\{r,s\}$.
Proposition~\ref{prop:Sat} then implies that $K$ is not satellite. So we can apply Proposition~\ref{prop:Tmn}
to conclude that $K=T_{r,s}$.
\end{proof}

\section{Preliminaries on Heegaard Floer homology}\label{sect:Pre}

The rest of this paper is devoted to the special case of $T_{5,2}$. In order to study this case, we need to understand the Heegaard Floer homology in more details. In this section, we will give the necessary background on Heegaard Floer homology.

\subsection{Heegaard Floer homology and correction terms}

 Heegaard Floer homology, introduced by
  Ozsv\'ath and Szab\'o \cite{OSzAnn1}, is an invariant  for closed oriented $3$--manifolds $Y$ equipped
  with Spin$^c$ structures $\mathfrak s$,
  taking the form of a collection of related homology groups as  $\widehat{HF}(Y,\mathfrak s)$, $HF^{\pm}(Y,\mathfrak s)$, and $HF^\infty(Y,\mathfrak s)$.

\begin{rem}
For simplicity, throughout this paper we will use $\mathbb F=\mathbb Z/2\mathbb Z$ coefficients for
Heegaard Floer homology.
\end{rem}

There is an absolute Maslov $\mathbb Z/2\mathbb Z$--grading on the Heegaard Floer homology groups. When $\mathfrak s$ is torsion, there is an absolute Maslov $\mathbb Q$--grading on $HF^+(Y,\mathfrak s)$.
Let $J\co \spin(Y)\to\spin(Y)$ be the conjugation on $\spin(Y)$, then
\begin{equation}\label{eq:HFconj}
HF^+(Y,\mathfrak s)\cong HF^+(Y,J\mathfrak s)
\end{equation}
as ($\mathbb Z/2\mathbb Z$ or $\mathbb Q$) graded groups.

There is a $U$--action on $HF^+$, and the isomorphism (\ref{eq:HFconj}) respects the $U$--action.

For a rational homology three-sphere $Y$ with Spin$^c$ structure $\mathfrak s$, $HF^+(Y,\mathfrak s)$ can be decomposed as the direct sum of two groups: the first group is the image of $HF^\infty(Y,\mathfrak s)$ in $HF^+(Y,\mathfrak s)$,
which is isomorphic to $\mathcal T^+=\mathbb F[U,U^{-1}]/U\mathbb F[U]$, supported in the even $\mathbb Z/2\mathbb Z$--grading, and its minimal absolute  $\mathbb{Q}$--grading is an invariant of $(Y,\mathfrak s)$, denoted by $d(Y,\mathfrak s)$, the {\it correction term} \cite{OSzAbGr}; the second group is the quotient modulo the above image and is denoted by $HF_{\mathrm{red}}(Y,\mathfrak s)$.  Altogether, we have $$HF^+(Y,\mathfrak s)=\mathcal{T}^+\oplus HF_{\mathrm{red}}(Y,\mathfrak s).$$

The correction term satisfies
\begin{equation}\label{eq:CorrSymm}
d(Y,\mathfrak s)=d(Y,J\mathfrak s),\qquad d(-Y,\mathfrak s)=-d(Y,\mathfrak s).
\end{equation}

Let $L(p,q)$ be the lens space obtained by $\frac{p}q$--surgery on the unknot.
The correction terms for lens spaces can be computed inductively as follows:
\begin{eqnarray*}
d(S^3,0)&=&0,\\
d(-L(p,q),i)&=&\frac14-\frac{(2i+1-p-q)^2}{4pq}-d(-L(q,r),j),
\end{eqnarray*}
where $0\le i<p+q$,
$r$ and $j$ are the reductions modulo $q$ of $p$ and $i$,  respectively.

\subsection{The knot Floer complex}

Given a null-homologous knot $K\subset Y$, Ozsv\'ath--Szab\'o \cite{OSzKnot} and Rasmussen \cite{Ras} defined the knot Floer homology.
For knots in $S^3$, the knot Floer homology is a finitely generated bigraded group
$$\widehat{HFK}(K)=\bigoplus_{d,s\in\mathbb Z}\widehat{HFK}_d(K,s),$$
where $d$ is the Maslov grading and $s$ is the Alexander grading.
The Euler characteristic of the knot Floer homology is the Alexander polynomial. More precisely,
suppose
\begin{equation}\label{eq:Alexander}
\Delta_K(T)=\sum_{s\in\mathbb Z}a_sT^s
\end{equation}
is the normalized Alexander polynomial of $K$,
then $$a_s=\sum_{d\in\mathbb Z}(-1)^d\dim\widehat{HFK}_d(K,s).$$

Knot Floer homology is closely related to the topology of knots. It detects the Seifert genus of a knot \cite{OSzGenus}, and it determines whether the knot is fibred \cite{Gh,NiFibred}.

More information is contained in the knot Floer chain complex $$C=CFK^{\infty}(Y,K)=\bigoplus_{i,j\in\mathbb Z}C\{(i,j)\}.$$ The differential $\partial\co C\to C$  satisfies that
$$\partial^2=0,\quad\partial C\{(i_0,j_0)\}\subset C\{i\le i_0,j\le j_0\}.$$
Moreover, $H_*(C\{(i,j)\})\cong\widehat{HFK}_{*-2i}(Y,K,j-i)$, and there is a natural chain complex isomorphism $U\co C\{(i,j)\}\to C\{(i-1,j-1)\}$ which decreases the Maslov grading by $2$.
By \cite{Ras}, we can always assume
$$C\{(i,j)\}\cong\widehat{HFK}(Y,K,j-i).$$

There are quotient chain complexes
$$A^+_k=C\{i\ge0 \text{ or }j\ge k\},\quad k\in\mathbb Z$$
and $B^+=C\{i\ge0\}\cong CF^+(Y)$.
They satisfy that $A^+_k\cong A^+_{-k}$. As in \cite{OSzIntSurg}, there are chain maps
$$v_k,h_k\co A^+_k\to B^+.$$
Here $v_k$ is the natural vertical projection, and $h_k$ is more or less a horizontal projection. We often abuse the notation by calling the homology groups and induced maps on homology $A^+_k,B^+,v_k,h_k$.
Following \cite{NiWu}, let $A_k^T=U^nA^+_k$ for $n\gg0$, then $A_k^T\cong\mathcal T^+$. Let $A_{\mathrm{red},k}=A_k^+/A_k^T$.

When $Y=S^3$, $B^+\cong \mathcal T^+$.
The homogeneous map $v_k\co A_k^T\to B^+$ is $U$--equivariant, so it is equal to $U^{V_k}$ for some nonnegative integer $V_k$. Similarly, $h_k$ is equal to $U^{H_k}$ for some nonnegative integer $H_k$.
The numbers $V_k,H_k$ satisfy that
\begin{equation}\label{eq:VH}
V_k=H_{-k},\quad V_k\ge V_{k+1}\ge V_k-1.
\end{equation}

\begin{conv}\label{conv:Gr}
The groups $A^+_k$ will be relatively $\mathbb Z$--graded groups. For our convenience, we choose an absolute grading on $A^+_{k}$, such that $\mathbf 1\in A^T_k$ has grading $0$.
\end{conv}

For any positive integer $d$, define
$$\mathcal T_{d}=
\langle\mathbf 1,U^{-1},\dots,U^{1-d}\rangle\subset \mathcal T^+,
$$
and define
$$\mathcal T_0=0\subset \mathcal T^+.$$
Suppose $K\subset S^3$ has Alexander polynomial (\ref{eq:Alexander}), let
$$t_i(K)=\sum_{j=1}^{\infty}ja_{i+j},\quad i=0,1,2,\dots$$
Then the coefficients $a_s$ can be recovered by the formula
$$a_s=t_{s-1}-2t_s+t_{s+1}, \quad s=1,2,\dots$$

\begin{lem}\label{lem:kerV}
Suppose $K\subset S^3$.

(1) For any $k\ge0$, we have
$$\ker v_k\cong \mathcal T_{V_k}\oplus A_{\mathrm{red}, k}$$
and
$$\chi(\ker v_k)=t_k(K).$$
(2) Suppose $g=g(K)$ is the Seifert genus of $K$, then $\widehat{HFK}(K,g)\cong\ker v_{g-1}$ and
\begin{equation}\label{eq:Genus}
g-1=\max\{k|\ker v_k\ne0\}.
\end{equation}
\end{lem}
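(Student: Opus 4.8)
The plan is to analyze the structure of the mapping cone/projection $v_k\co A_k^+\to B^+$ directly from the definitions. For part (1), recall $A_k^+ = C\{i\geq 0 \text{ or } j\geq k\}$ and $B^+ = C\{i\geq 0\}$, with $v_k$ the natural vertical projection (quotient by the part with $i<0$, i.e.\ restriction to the $i\geq 0$ quadrant). First I would observe that $A_k^+$ splits, as a group, into the ``tower part'' $A_k^T\cong\mathcal T^+$ and $A_{\mathrm{red},k}=A_k^+/A_k^T$; since $v_k$ is $U$-equivariant and on the tower part equals $U^{V_k}$ (by definition of $V_k$), its kernel on the tower part is exactly $\mathcal T_{V_k}=\langle\mathbf 1,U^{-1},\dots,U^{1-V_k}\rangle$. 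The key point is that $v_k$ kills \emph{all} of $A_{\mathrm{red},k}$: elements of $A_{\mathrm{red},k}$ are $U$-torsion, and $B^+\cong\mathcal T^+$ has no $U$-torsion, so any map $A_{\mathrm{red},k}\to B^+$ is zero. Hence $\ker v_k = \mathcal T_{V_k}\oplus A_{\mathrm{red},k}$ as claimed. For the Euler characteristic, I would use that $\chi(A_k^+)$ and $\chi(B^+)$, suitably interpreted (these are infinitely generated, so one works with the truncated/reduced versions or with $\chi$ of $\ker v_k$ which \emph{is} finite once we note $A_{\mathrm{red},k}$ is finite and $\mathcal T_{V_k}$ is finite-dimensional), together with the large-surgery formula relating $H_*(A_k^+)$ to $HF^+$ of a large surgery, reduce the claim to a combinatorial identity. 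Concretely, $\chi(\mathcal T_{V_k}) = V_k$ and one checks $V_k = t_k(K)$ when $A_{\mathrm{red},k}$ has vanishing Euler characteristic; more robustly, $\chi(\ker v_k)$ computes $\chi$ of $C\{i<0, j\geq k\}$, which by $\widehat{HFK}$-identification equals $\sum_{i<0}\sum_{j\geq k}\chi\big(\widehat{HFK}(K,j-i)\big) = \sum_{i<0}\sum_{j\geq k} a_{j-i}$. Reindexing $\ell = j-i$ with $i<0$, $j\geq k$: for each $\ell$, the number of pairs is $\#\{(i,j): i\leq -1, j\geq k, j-i=\ell\}=\max(0,\ell-k)$ when... — here I would carefully do the bookkeeping to land exactly on $\sum_{m\geq 1} m\, a_{k+m} = t_k(K)$.

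For part (2), the equality $\widehat{HFK}(K,g)\cong\ker v_{g-1}$ follows by specializing the above: $A_{\mathrm{red},g-1}$ sits at the top Alexander grading, and by the genus-detection property of knot Floer homology \cite{OSzGenus} plus the identification $C\{(i,j)\}\cong\widehat{HFK}(K,j-i)$, the complex $C\{i<0,j\geq g-1\}$ collapses (for degree reasons, $j-i\geq g$ forces $j-i=g$ and $i=-1,j=g-1$ is the only contributing box up to the tower), so $\ker v_{g-1}$ is precisely $H_*$ of the column $C\{i=-1, j= g-1\}\cong\widehat{HFK}(K,g)$; I would spell out that the tower contribution $\mathcal T_{V_{g-1}}$ and the genuine $\widehat{HFK}(K,g)$ assemble correctly here, using that $\widehat{HFK}(K,g)\neq 0$ by \cite{OSzGenus}. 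For the genus formula \eqref{eq:Genus}, I would argue both inequalities: if $k\geq g$ then $A_k^+ = B^+$ (since $j\geq k\geq g$ is empty in the chain complex, as $\widehat{HFK}(K,s)=0$ for $s\geq g$ — wait, $s=g$ is allowed — so for $k\geq g$, the region $j\geq k$ only meets $\widehat{HFK}(K,s)$ with $s\leq 0$... I will be careful: the cleanest statement is that for $k\geq g$, $A_k^+\to B^+$ is an isomorphism, hence $\ker v_k = 0$), while for $k = g-1$ we just showed $\ker v_{g-1}\cong\widehat{HFK}(K,g)\neq 0$. Combined with monotonicity of $V_k$ and $A_{\mathrm{red},k}$ in $k$, this gives $g-1 = \max\{k : \ker v_k\neq 0\}$.

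I expect the main obstacle to be the careful bookkeeping in part (1): making precise the sense in which $\chi(\ker v_k)$ is well-defined (the groups in sight are infinitely generated, so one must either truncate to $A_k^{T'}$ for large power of $U$, or note that $\ker v_k$ itself is the finite-dimensional object $\mathcal T_{V_k}\oplus A_{\mathrm{red},k}$) and then matching the double sum over the third quadrant strip $\{i<0, j\geq k\}$ to the defining formula $t_k(K)=\sum_{j\geq 1} j\, a_{k+j}$ without an off-by-one error. The conceptual input — that $U$-torsion cannot map nontrivially into the $U$-torsion-free group $\mathcal T^+$, and the genus-detection theorem pinning down the top grading — is straightforward; the rest is a finite-dimensional Euler-characteristic computation.
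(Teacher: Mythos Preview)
Your proposal is correct and follows essentially the same route as the paper. The paper's proof is more compressed: it declares the decomposition $\ker v_k\cong\mathcal T_{V_k}\oplus A_{\mathrm{red},k}$ ``clear from the definitions,'' then uses the short exact sequence $0\to C\{i<0,j\ge k\}\to A_k^+\to B^+\to 0$ together with surjectivity of $v_k$ on homology to identify $\ker v_k$ with $H_*(C\{i<0,j\ge k\})$ and read off $\chi=t_k$; your $U$--torsion argument and explicit reindexing flesh out exactly these two steps, and your treatment of part~(2) (that $C\{i<0,j\ge g-1\}$ collapses to the single box $C\{(-1,g-1)\}$, and is empty for $k\ge g$) matches the paper verbatim. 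The one point you leave implicit is the surjectivity of $v_k$ on homology---but this is immediate from your own observation that $v_k|_{A_k^T}=U^{V_k}$ is onto $\mathcal T^+$, so there is no gap. The appeal to ``monotonicity of $V_k$ and $A_{\mathrm{red},k}$'' at the end is unnecessary: you have already shown directly that $\ker v_{g-1}\ne 0$ and $\ker v_k=0$ for $k\ge g$.
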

\begin{proof}
From the definition of $V_k$ and $A_{\mathrm{red}, k}$, it is clear that $\ker v_k\cong \mathcal T_{V_k}\oplus A_{\mathrm{red}, k}$.
The short exact sequence of chain complexes
$$
\begin{CD}
0@>>>C\{i<0,j\ge k\}@>>>A^+_k@>v_k>>B^+@>>>0
\end{CD}
$$
induces an exact triangle between the homology groups. Since $v_k$ is always surjective on the homology level, its kernel is the homology of $C\{i<0,j\ge k\}$, whose Euler characteristic is $t_k$.
In particular, $$\ker v_{g-1}\cong C\{(-1,g-1)\}\cong \widehat{HFK}(K,g)\ne0$$ and $\ker v_k=0$ when $k\ge g$.
\end{proof}

\subsection{The rational surgery formula}

The basic philosophy of knot Floer homology is, if we know all the information about it, then we can compute the Heegaard Floer homology of all the surgeries on $K$. Let us give more details about the surgery formula below, following \cite{OSzRatSurg}.

Let $$\mathbb A_i^+=\bigoplus_{s\in\mathbb Z}(s,A^+_{\lfloor\frac{i+ps}q\rfloor}(K)),\mathbb B_{i}^+=\bigoplus_{s\in\mathbb Z}(s,B^+).$$
Define maps
$$v_{\lfloor\frac{i+ps}q\rfloor}\co (s,A^+_{\lfloor\frac{i+ps}q\rfloor}(K))\to (s,B^+),\quad h_{\lfloor\frac{i+ps}q\rfloor}\co (s,A^+_{\lfloor\frac{i+ps}q\rfloor}(K))\to (s+1,B^+).$$
Adding these up, we get a chain map
$$D_{i,p/q}^+\co\mathbb A_i^+\to \mathbb B_i^+,$$
with
$$D_{i,p/q}^+\{(s,a_s)\}_{s\in\mathbb Z}=\{(s,b_s)\}_{s\in\mathbb Z},$$
where
$$b_s=v_{\lfloor\frac{i+ps}q\rfloor}(a_s)+h_{\lfloor\frac{i+p(s-1)}q\rfloor}(a_{s-1}).$$

In \cite{OSzRatSurg}, when $Y$ is an integer homology sphere there is an affine identification of $\mathrm{Spin}^c(Y_{p/q}(K))$ with $\mathbb Z/p\mathbb Z$, such that Theorem~\ref{thm:MC} below holds for each $i$ between $0$ and $p-1$. This identification can be made explicit by the procedure in \cite[Section~4, Section~7]{OSzRatSurg}. For our purpose in this paper, we only need to know that such an identification exists. We will use this identification throughout this paper.

We first recall the rational surgery formula \cite[Theorem~1.1]{OSzRatSurg}.

\begin{thm}[Ozsv\'ath--Szab\'o]\label{thm:MC}
Suppose that  $Y$ is an integer homology sphere, $K\subset Y$ is a knot. Let $\mathbb X^+_{i,p/q}$ be the mapping cone of $D_{i,p/q}^+$, then there is a graded isomorphism of groups
$$H_*(\mathbb X^+_{i,p/q})\cong HF^+(Y_{p/q}(K),i).$$
\end{thm}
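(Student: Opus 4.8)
This is Theorem~1.1 of Ozsv\'ath--Szab\'o \cite{OSzRatSurg}, so in the present paper nothing has to be proved beyond quoting it; what follows is a sketch of why it holds. The backbone of the argument is the \emph{large surgery formula} of \cite{OSzIntSurg}: for an integer $N$ sufficiently large (for instance $N\ge 2g(K)-1$) one has $HF^+(Y_N(K),[i])\cong H_*(A^+_i)$, with the labelling of $\spin(Y_N(K))$ coming from the two--handle cobordism. The plan is to take this, together with the surgery exact triangle, as the only geometric input and to derive the general formula by homological manipulation of mapping cones.

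First I would set up the cone $\mathbb X^+_{i,p/q}$ abstractly and observe that when $q=1$ it is exactly the integer--surgery mapping cone of \cite{OSzIntSurg}, so only $q>1$ needs the extra work. The maps $v_k$ and $h_k$ arise as the cobordism maps attached to the two $2$--handles that appear when $Y_{p/q}(K)$ is compared with a large integer surgery; on the tower $A_k^T\cong\mathcal T^+$ they are $U$--equivariant and hence of the form $U^{V_k}$, $U^{H_k}$, while on $A_{\mathrm{red},k}$ their behaviour is read off from the Heegaard diagram. So the next step is to identify the map $D_{i,p/q}^+$ with the map induced by these cobordisms.

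The heart of the argument is a \emph{truncation}. The cone $\mathbb X^+_{i,p/q}$ is infinite, but for $s\gg 0$ one has $\lfloor (i+ps)/q\rfloor\ge g$, so the corresponding $v$ is an isomorphism on homology, and symmetrically for $s\ll 0$ the corresponding $h$ is; cancelling these acyclic pieces leaves a finite subquotient complex with the same homology. One then recognises this finite complex as an iterated mapping cone assembled from copies of the surgery exact triangle -- the relevant triangle relating $Y_{a/b}(K)$, $Y_{a'/b'}(K)$, $Y_{a''/b''}(K)$ for three slopes pairwise at distance one -- and an induction on $q$ (equivalently on the length of a continued fraction expansion of $p/q$) collapses everything onto the $q=1$ case, hence onto \cite{OSzIntSurg}. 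Alternatively, one can present $Y_{p/q}(K)$ as an integral surgery on a chain link $K\cup O_1\cup\cdots\cup O_m$, apply the link surgery formula, and contract the unknotted components.

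The last thing to pin down is the absolute $\mathbb Q$--grading and the identification of $\spin(Y_{p/q}(K))$ with $\mathbb Z/p\mathbb Z$: the grading shift on each summand $(s,A^+_{\lfloor(i+ps)/q\rfloor})$ is forced by the degree of the corresponding cobordism map, and the matching of $\spin$ structures is the explicit recipe of \cite[Sections~4 and~7]{OSzRatSurg}. I expect the hard part to be precisely this bookkeeping -- keeping the grading shifts and the $\spin$ identifications coherent through the truncation and the inductive step -- since all of the underlying geometric ingredients are already available from \cite{OSzIntSurg} and \cite{OSzKnot}.
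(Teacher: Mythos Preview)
Your proposal is correct: the paper does not prove this theorem at all but simply quotes it as \cite[Theorem~1.1]{OSzRatSurg}, exactly as you say in your first sentence. The sketch you add of the Ozsv\'ath--Szab\'o argument (large surgery formula plus truncation and induction on $q$ via the exact triangle, with the grading and $\spin$ bookkeeping from \cite[Sections~4 and~7]{OSzRatSurg}) is a fair outline of the original proof, but none of it appears in the present paper.
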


The absolute $\mathbb Q$--grading on $\mathbb X^+_{i,p/q}$ can be determined as follows. We first require $D_{i,p/q}^+$ drops the absolute $\mathbb Q$--grading by $1$, then an appropriate absolute $\mathbb Q$--grading on $\mathbb B_i^+$ will determine the absolute grading of the mapping cone. We choose the absolute $\mathbb Q$--grading on $(s,B^+)$ such that the corresponding absolute grading on the mapping cone gives us the right grading on $HF^+(Y_{p/q}(\mathrm{unknot}),i)=HF^+(Y\#L(p,q),i)$. The absolute $\mathbb Z/2\mathbb Z$--grading can also be determined in this way.

More concretely, when $Y=S^3$ and $p,q>0$, the computation of $HF^+(S^3_{p/q}(\mathrm{unknot}),i)$ using Theorem~\ref{thm:MC} (see \cite{NiWu}) shows that the absolute $\mathbb Q$--grading on $\mathbb B_i^+$ is determined as follows.

Let $$
s_i=\left\{
\begin{array}{ll}
0, &\text{if }V_{\lfloor\frac{i}q\rfloor}\ge H_{\lfloor\frac{i+p(-1)}q\rfloor},\\
-1, &\text{if }V_{\lfloor\frac{i}q\rfloor}< H_{\lfloor\frac{i+p(-1)}q\rfloor}.
\end{array}
\right.
$$
Then
\begin{eqnarray}\label{eq:GrB}
\mathrm{gr}(s_i,\mathbf 1)&=&d(L(p,q),i)-1,\nonumber\\
\mathrm{gr}(s+1,\mathbf 1)&=&\mathrm{gr}(s,\mathbf 1)+2\lfloor\frac{i+ps}q\rfloor,\quad \text{for any }s\in\mathbb Z.\label{eq:GrOnB}
\end{eqnarray}

When $Y=S^3$, we can use the above theorem to compute $HF^+(S^3_{p/q}(K),i)\cong\mathcal T^+\oplus HF_{\mathrm{red}}(S^3_{p/q}(K),i)$. For the part that is isomorphic to $\mathcal T^+$, we only need to know its absolute $\mathbb Q$--grading, which is encoded in the correction term.
We recall the following proposition from \cite{NiWu}.

\begin{prop}\label{prop:Corr}
Suppose $K\subset S^3$, and $p,q>0$ are relatively prime integers. Then for any $0\le i\le p-1$ we have
$$d(S^3_{p/q}(K),i)=d(L(p,q),i)-2\max\{V_{\lfloor\frac{i}q\rfloor},H_{\lfloor\frac{i-p}q\rfloor}\}.$$
\end{prop}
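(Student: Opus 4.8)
The plan is to extract $d(S^3_{p/q}(K),i)$ directly from the rational surgery mapping cone of Theorem~\ref{thm:MC}. That theorem identifies $HF^+(S^3_{p/q}(K),i)$ with $H_*(\mathbb X^+_{i,p/q})$, the homology of the mapping cone of $D^+_{i,p/q}\co\mathbb A^+_i\to\mathbb B^+_i$, carrying the absolute $\mathbb Q$--grading normalized so that $D^+_{i,p/q}$ drops grading by $1$ and the grading of $(s,\mathbf 1)\in(s,B^+)$ is prescribed by (\ref{eq:GrOnB}); this normalization is chosen precisely so that when $K$ is the unknot the resulting tower has bottom grading $d(L(p,q),i)$. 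Since $d(S^3_{p/q}(K),i)$ is the minimal grading of a nonzero element of $HF^+(S^3_{p/q}(K),i)$ lying in $\bigcap_{n\ge0}\mathrm{Im}\,U^n$ -- that is, the bottom grading of the $\mathcal T^+$--tower -- the task is to locate this bottom generator inside $H_*(\mathbb X^+_{i,p/q})$ and compare its grading with the unknot value $d(L(p,q),i)$.

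First I would truncate the mapping cone. By Lemma~\ref{lem:kerV}, $v_k$ is an isomorphism of towers once $k\ge g(K)$ (when $V_k=0$), and dually, via $A^+_k\cong A^+_{-k}$, so is $h_k$ once $k\le-g(K)$ (when $H_k=0$); cancelling the resulting acyclic summands leaves a quasi-isomorphic finite mapping cone involving only finitely many of the $A^+_{\lfloor\frac{i+ps}q\rfloor}$ and $(s,B^+)$. Since $B^+\cong\mathcal T^+$ is a single tower and $v_k,h_k$ restrict to the surjections $U^{V_k},U^{H_k}$ on $A^T_k$, the reduced summands $A_{\mathrm{red},k}=A^+_k/A^T_k$ contribute only to $HF_{\mathrm{red}}$ and may be discarded when computing the $\mathcal T^+$--tower; this replaces each $A^+_{\lfloor\frac{i+ps}q\rfloor}$ by $A^T_{\lfloor\frac{i+ps}q\rfloor}\cong\mathcal T^+$. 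What remains is a finite zig-zag of copies of $\mathcal T^+$ joined by the maps $U^{V_{\lfloor\frac{i+ps}q\rfloor}}$ and $U^{H_{\lfloor\frac{i+ps}q\rfloor}}$, whose homology is a single $\mathcal T^+$; the absolute grading on each $(s,A^T_{\lfloor\frac{i+ps}q\rfloor})$ is pinned down by requiring $v$ and $h$ to drop grading by $1$.

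Then I would compute the bottom grading of this surviving $\mathcal T^+$. By the monotonicity relations (\ref{eq:VH}), $V_{\lfloor\frac{i+ps}q\rfloor}$ is nonincreasing and $H_{\lfloor\frac{i+ps}q\rfloor}$ is nondecreasing in $s$, so that $V_{\lfloor\frac iq\rfloor}=\max_{s\ge0}V_{\lfloor\frac{i+ps}q\rfloor}$ and $H_{\lfloor\frac{i-p}q\rfloor}=\max_{s\le-1}H_{\lfloor\frac{i+ps}q\rfloor}$. A direct inspection of the finite zig-zag then shows that the most restrictive constraint on how far down the surviving tower reaches comes exactly from the two arrows entering the $s=0$ copy of $B^+$, namely $v_{\lfloor\frac iq\rfloor}=U^{V_{\lfloor\frac iq\rfloor}}$ out of $(0,A^T_{\lfloor\frac iq\rfloor})$ and $h_{\lfloor\frac{i-p}q\rfloor}=U^{H_{\lfloor\frac{i-p}q\rfloor}}$ out of $(-1,A^T_{\lfloor\frac{i-p}q\rfloor})$, pushing the bottom of the tower down by exactly $2\max\{V_{\lfloor\frac iq\rfloor},H_{\lfloor\frac{i-p}q\rfloor}\}$ relative to the unknot. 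This gives $d(S^3_{p/q}(K),i)=d(L(p,q),i)-2\max\{V_{\lfloor\frac iq\rfloor},H_{\lfloor\frac{i-p}q\rfloor}\}$. As a sanity check, for the unknot and $0\le i\le p-1$ one has $\lfloor\frac iq\rfloor\ge0$ and $\lfloor\frac{i-p}q\rfloor\le-1$, so $V_{\lfloor\frac iq\rfloor}=H_{\lfloor\frac{i-p}q\rfloor}=0$ and the formula returns $d(L(p,q),i)$, consistent with the grading normalization.

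The hard part is the absolute $\mathbb Q$--grading bookkeeping: one must carry the grading faithfully through the truncation and all the cancellations, through the case split $s_i\in\{0,-1\}$ built into (\ref{eq:GrOnB}), and through the identification of the surviving tower, and then verify that the downward shift of its bottom is controlled precisely by $\max\{V_{\lfloor\frac iq\rfloor},H_{\lfloor\frac{i-p}q\rfloor}\}$, and not by some other $v_k$ or $h_k$ occurring along the zig-zag -- this is exactly where the monotonicity (\ref{eq:VH}) is essential, guaranteeing that the $s=0$ summand of $\mathbb B^+_i$ is where the constraint is tightest. All remaining manipulations are formal properties of mapping cones over $\mathbb F[U]$, and the statement in this form is recorded in \cite{NiWu}.
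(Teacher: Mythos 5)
Your approach is correct, and it is essentially the only one available: this proposition is recalled in the paper from \cite{NiWu} without proof, and your truncate--discard-the-reduced-part--locate-the-tower argument is exactly the argument given there. It is also the same mechanism the authors do write out in the proof of Proposition~\ref{prop:CompRed}: the tower in $\ker D^+_{i,p/q}$ is the image of the explicit $U$--equivariant section $\rho\co\mathcal T^+\to\ker D^T_{i,p/q}$, whose bottom element $\rho(\mathbf 1)$ is supported at $s=s_i\in\{0,-1\}$ with value $\mathbf 1\in A^T_{\lfloor (i+ps_i)/q\rfloor}$; the monotonicity (\ref{eq:VH}) is what makes the propagation exponents in $\rho$ nonnegative, which is precisely your claim that the binding constraint sits at the two arrows into the $s=0$ copy of $B^+$. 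The one delicate point, which you correctly flag but do not carry out, is the grading of $\rho(\mathbf 1)$: it should be read through the arrow landing in $(0,B^+)$ --- namely $v_{\lfloor i/q\rfloor}=U^{V_{\lfloor i/q\rfloor}}$ when $s_i=0$ and $h_{\lfloor (i-p)/q\rfloor}=U^{H_{\lfloor (i-p)/q\rfloor}}$ when $s_i=-1$ --- which in both cases gives a drop of exactly $2\max\{V_{\lfloor i/q\rfloor},H_{\lfloor (i-p)/q\rfloor}\}$ below the unknot value $d(L(p,q),i)$, whereas reading it through the other arrow returns the same answer only after invoking the identity $H_k=V_k+k$. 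There is no gap in the ideas; only this routine but genuinely fiddly bookkeeping is left implicit.
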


Using (\ref{eq:VH}) and the fact that $-\lfloor\frac{i-p}q\rfloor=\lfloor\frac{p+q-1-i}q\rfloor$, we have $H_{\lfloor\frac{i-p}q\rfloor}=V_{\lfloor\frac{p+q-1-i}q\rfloor}$.
To simplify the notation, let
$$\delta_i=\delta_i(K)=\max\{V_{\lfloor\frac{i}q\rfloor},H_{\lfloor\frac{i-p}q\rfloor}\}=\max\{V_{\lfloor\frac{i}q\rfloor},V_{\lfloor\frac{p+q-1-i}q\rfloor}\}.$$
We note that $\delta_i$ can also be written as
\begin{equation}\label{eq:DeltaMin}
\delta_i=\min\left\{V_{\lfloor\frac{i+ps_i}q\rfloor},H_{\lfloor\frac{i+ps_i}q\rfloor}\right\}.
\end{equation}

Let $D_{i,p/q}^T$ be the restriction of $D_{i,p/q}^+$ on
$$\mathbb A_i^T=\bigoplus_{s\in\mathbb Z}(s,A^T_{\lfloor\frac{i+ps}q\rfloor}(K)).$$
When $Y=S^3$ and $p/q>0$, the map $D_{i,p/q}^T\co \mathbb A_i^T\to \mathbb B_i$ is onto \cite[Lemma~2.9]{NiWu}, hence $D_{i,p/q}^+\co \mathbb A_i^+\to \mathbb B_i$ is surjective.
So $HF^+(S^3_{p/q}(K),i)$ is isomorphic to $\ker D_{i,p/q}^+$.

The reduced part $HF_{\mathrm{red}}(S^3_{p/q}(K),i)$ comes in two parts. One part is the contribution of
$$\mathbb A_{\mathrm{red},i}=\bigoplus_{s\in\mathbb Z}(s,A_{\mathrm{red},\lfloor\frac{i+ps}q\rfloor}(K)).$$
The other part is a subgroup of  $\ker D_{i,p/q}^T$, which can be computed from the integers $V_k,H_k$.

\begin{prop}\label{prop:CompRed}Suppose that $K$ is a knot in $S^3$, $p,q>0$ are relatively prime integers.
Under the identification in Theorem~\ref{thm:MC}, the group
\begin{equation}\label{eq:RedGroup}
\mathbb A_{\mathrm{red},i}\oplus\left(\bigoplus_{s\in\mathbb Z}(s,\mathcal T_{\min\{V_{\lfloor \frac{i+ps}q\rfloor},H_{\lfloor \frac{i+ps}q\rfloor}\}})\right)/(s_i,\mathcal T_{\delta_i})
\end{equation}
is identified with
$HF_{\mathrm{red}}(S^3_{p/q}(K),i)$. Here it follows from (\ref{eq:DeltaMin}) that $(s_i,\mathcal T_{\delta_i})$ is a summand in the direct sum in (\ref{eq:RedGroup}).
\end{prop}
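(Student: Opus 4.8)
The plan is to read $HF_{\mathrm{red}}(S^3_{p/q}(K),i)$ off the mapping cone of Theorem~\ref{thm:MC}, peeling its two pieces apart one at a time. Write $k_s=\lfloor(i+ps)/q\rfloor$ throughout. Since $p,q>0$, the map $D^+_{i,p/q}\co\mathbb A^+_i\to\mathbb B_i$ is onto \cite[Lemma~2.9]{NiWu}, so $HF^+(S^3_{p/q}(K),i)\cong\ker D^+_{i,p/q}$ as graded groups. Using the splitting $A^+_k\cong\mathcal T^+\oplus A_{\mathrm{red},k}$ of graded $\mathbb F[U]$--modules (in which $A^T_k$ is the $\mathcal T^+$--summand) we get $\mathbb A^+_i=\mathbb A^T_i\oplus\mathbb A_{\mathrm{red},i}$ with $D^+_{i,p/q}|_{\mathbb A^T_i}=D^T_{i,p/q}$. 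As both $D^+_{i,p/q}$ and $D^T_{i,p/q}$ are onto $\mathbb B_i$, the inclusion $\mathbb A^T_i\hookrightarrow\mathbb A^+_i$ and the projection $\mathbb A^+_i\to\mathbb A_{\mathrm{red},i}$ yield a short exact sequence of graded groups
\[
0\to\ker D^T_{i,p/q}\to\ker D^+_{i,p/q}\to\mathbb A_{\mathrm{red},i}\to0
\]
(right-exactness uses surjectivity of $D^T_{i,p/q}$), which splits over $\mathbb F$. Hence $HF^+(S^3_{p/q}(K),i)\cong\ker D^T_{i,p/q}\oplus\mathbb A_{\mathrm{red},i}$; since $\mathbb A_{\mathrm{red},i}$ is finite, this already exhibits $\mathbb A_{\mathrm{red},i}$ as one of the two summands in (\ref{eq:RedGroup}) and reduces the proposition to the identification, as graded groups,
\[
\ker D^T_{i,p/q}\ \cong\ \mathcal T^+\ \oplus\ \bigoplus_{s\ne s_i}\bigl(s,\ \mathcal T_{\min\{V_{k_s},H_{k_s}\}}\bigr),
\]
with each torsion summand sitting in its $(s,A^T_{k_s})$--slot and the free $\mathcal T^+$ in the $s_i$--slot; granting this, (\ref{eq:DeltaMin}) rewrites the right-hand side as $\bigl(\bigoplus_s(s,\mathcal T_{\min\{V_{k_s},H_{k_s}\}})\bigr)/(s_i,\mathcal T_{\delta_i})$, as claimed.

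To prove this last identification one truncates the mapping cone to a finite zig-zag as in \cite[Section~4]{OSzRatSurg}, \cite[Section~2]{NiWu}: after canceling the pairs on which $v$ or $h$ is an isomorphism, $\mathbb A^T_i$ becomes $\bigoplus_{s_-\le s\le s_+}(s,\mathcal T^+)$, $\mathbb B_i$ becomes $\bigoplus_{s_-<s\le s_+}(s,\mathcal T^+)$, and the component of $D^T_{i,p/q}$ at the $s$--th $\mathbb B$--slot is $U^{V_{k_s}}a_s+U^{H_{k_{s-1}}}a_{s-1}$. The elementary inequalities $k_0\ge0$, $k_{-1}\le-1$, $k_s\ge0$ for $s\ge1$, $k_s\le-1$ for $s\le-1$, together with $k_s$ nondecreasing, $V_k\ge V_{k+1}\ge V_k-1$ and $H_k=V_{-k}$, give $V_{k_s}\le H_{k_s}$ for $s\ge0$ and $H_{k_s}\le V_{k_s}$ for $s\le0$, and show that the relation at the $s$--th $\mathbb B$--slot satisfies $V_{k_s}\ge H_{k_{s-1}}$ for $s\le s_i$ and $V_{k_s}\le H_{k_{s-1}}$ for $s\ge s_i+1$ — so $s_i$, defined through the comparison of $V_{\lfloor i/q\rfloor}$ with $H_{\lfloor(i-p)/q\rfloor}$, is exactly the index at which the zig-zag turns. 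Then one solves the kernel equations recursively outward from the $s_i$--slot: $a_{s_i}$ is free, ranging over a $U$--equivariant copy of $\mathcal T^+$ (the ``coherent tower''); for $s>s_i$ the relation at the $s$--th slot determines $a_s$ from $a_{s-1}$ with indeterminacy $\ker U^{V_{k_s}}=\mathcal T_{V_{k_s}}=\mathcal T_{\min\{V_{k_s},H_{k_s}\}}$ (the last equality since $k_s\ge0$), and for $s<s_i$ it determines $a_{s-1}$ from $a_s$ with indeterminacy $\ker U^{H_{k_{s-1}}}=\mathcal T_{H_{k_{s-1}}}=\mathcal T_{\min\{V_{k_{s-1}},H_{k_{s-1}}\}}$. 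An induction on $|s-s_i|$ then shows these indeterminacies split off as internal direct summands supported in a single slot — here one uses $\mathcal T_{V_{k_s}}\subseteq\mathcal T_{H_{k_s}}$ for $s>s_i$ (resp.\ $\mathcal T_{H_{k_{s-1}}}\subseteq\mathcal T_{V_{k_{s-1}}}$ for $s<s_i$) to see that such an element is a cycle — yielding the stated direct sum decomposition.

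Combining the two steps gives $HF_{\mathrm{red}}(S^3_{p/q}(K),i)\cong\mathbb A_{\mathrm{red},i}\oplus\bigl(\bigoplus_s(s,\mathcal T_{\min\{V_{k_s},H_{k_s}\}})\bigr)/(s_i,\mathcal T_{\delta_i})$, which is exactly (\ref{eq:RedGroup}).

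\textbf{The main obstacle} is the second step carried out with the absolute $\mathbb Q$--grading in hand. One must check that the turning index of the zig-zag coincides with the $s_i$ appearing in the surgery formula; that $\min\{V_{k_s},H_{k_s}\}$ collapses to the ``vertical'' $V_{k_s}$ for $s>s_i$ and the ``horizontal'' $H_{k_s}$ for $s<s_i$, so the two families of indeterminacies merge into the single uniform expression in (\ref{eq:RedGroup}); that the recursion really produces a direct sum of graded pieces and not a non-split extension; and — most delicately — that each torsion summand acquires precisely the grading it inherits from $(s,A^T_{k_s})$ under the conventions (\ref{eq:GrB})--(\ref{eq:GrOnB}). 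All of this is elementary but bookkeeping-heavy; it is reassuring that the grading of the surviving tower, namely the bottom degree $d(L(p,q),i)-2\delta_i$, is independently pinned down by Proposition~\ref{prop:Corr}.
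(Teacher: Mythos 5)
Your proposal is correct and takes essentially the same route as the paper: the paper likewise isolates the tower in $\ker D^T_{i,p/q}$ as the coherent element propagated outward from the $s_i$--slot (an explicit map $\rho\co\mathcal T^+\to\ker D^T_{i,p/q}$, with the key properties cited from the proof of \cite[Proposition~1.6]{NiWu}) and identifies the complement with $\bigoplus_{s\ne s_i}(s,\mathcal T_{\min\{V,H\}})$ by subtracting $\rho(\eta_{s_i})$ from an arbitrary kernel element and running exactly the inequality bookkeeping you describe. Your short-exact-sequence handling of the $\mathbb A_{\mathrm{red},i}$ summand is a slightly more careful packaging of the paper's remark that the group (\ref{eq:RedGroup}) ``is clearly in the kernel of $D^+_{i,p/q}$,'' but the underlying argument is the same.
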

\begin{proof}
Since the sequence $V_k=H_{-k}$ ($k\in\mathbb Z$) is nonincreasing, we have
\begin{equation}\label{eq:si}
\begin{array}{ll}
H_{\lfloor \frac{i+p(s-1)}q\rfloor}\ge H_0=V_0\ge V_{\lfloor \frac{i+ps}q\rfloor} &\text{if }s>0,\\
H_{\lfloor \frac{i+p(s-1)}q\rfloor}\le H_0=V_0\le V_{\lfloor \frac{i+ps}q\rfloor} &\text{if }s<0.
\end{array}
\end{equation}

Given $\xi\in\mathcal T^+$, define
$$\rho(\xi)=\{(s,\xi_s)\}_{s\in\mathbb Z}$$
as follows. If
$$
V_{\lfloor\frac{i}q\rfloor}\ge H_{\lfloor\frac{i+p(-1)}q\rfloor},
$$
let
$$\xi_{-1}=U^{V_{\lfloor\frac{i}q\rfloor}-H_{\lfloor\frac{i+p(-1)}q\rfloor}}\xi,\quad \xi_{0}=\xi;$$
if
$$
V_{\lfloor\frac{i}q\rfloor}< H_{\lfloor\frac{i+p(-1)}q\rfloor},
$$
let
$$\quad\xi_{-1}=\xi,\quad \xi_{0}=U^{H_{\lfloor\frac{i+p(-1)}q\rfloor}-V_{\lfloor\frac{i}q\rfloor}}\xi.$$
For other $s$, using (\ref{eq:si}), let
$$
\xi_s=\bigg\{
\begin{array}{ll}
U^{H_{\lfloor\frac{i+p(s-1)}q\rfloor}-V_{\lfloor\frac{i+ps}q\rfloor}}\xi_{s-1},&\text{if }s>0,\\
U^{V_{\lfloor\frac{i+p(s+1)}q\rfloor}-H_{\lfloor\frac{i+ps}q\rfloor}}\xi_{s+1},&\text{if }s<-1.
\end{array}
$$

As in the proof of \cite[Proposition~1.6]{NiWu}, $\rho$ maps $\mathcal T^+$ injectively into $\ker D^T_{i,p/q}$ and $U\rho(\mathbf 1)=0$. Hence $\rho(\mathcal T^+)$ is the part of $\ker D^T_{i,p/q}$ which is isomorphic to $\mathcal T^+$.

Suppose $\eta=\{(s,\eta_s)\}_{s\in\mathbb Z}\in\ker D_{i,p/q}^T$. Let $\zeta=\eta-\rho(\eta_{s_i})\in\ker D_{i,p/q}^T$, then $\zeta_{s_i}=0$. Using (\ref{eq:si}), we can check $\zeta_s\in \mathcal T_{\min\{V_{\lfloor \frac{i+ps}q\rfloor},H_{\lfloor \frac{i+ps}q\rfloor}\}}$ for any $s\ne s_i$. So $\zeta$ is contained in the group (\ref{eq:RedGroup}). On the other hand, the group (\ref{eq:RedGroup}) is clearly in the kernel of $D_{i,p/q}^+$, so our conclusion holds.
\end{proof}

\begin{cor}\label{cor:RankRed}
Suppose $K\subset S^3$, and $p,q>0$ are relatively prime integers. Then
the rank of $HF_{\mathrm{red}}(S^3_{p/q}(K))$ is equal to
$$
q\big(\dim A_{\mathrm{red},0}+V_0+2\sum_{k=1}^{\infty}(\dim A_{\mathrm{red},k}+V_k)\big)-\sum_{i=0}^{p-1}\delta_i.
$$
\end{cor}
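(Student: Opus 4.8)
The plan is to sum the per-$\spin$-structure computation of Proposition~\ref{prop:CompRed} over all $p$ Spin$^c$ structures, and then carry out two elementary bookkeeping simplifications: a reindexing of the pairs $(i,s)$, and the symmetries of $A_{\mathrm{red},k}$ and of $V_k,H_k$.

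First I would fix $i$ with $0\le i\le p-1$. Proposition~\ref{prop:CompRed} identifies $HF_{\mathrm{red}}(S^3_{p/q}(K),i)$ with the group displayed there; taking $\mathbb F$--dimensions, using $\dim_{\mathbb F}\mathcal T_d=d$ (and $\mathcal T_0=0$), and using that $(s_i,\mathcal T_{\delta_i})$ is one of the summands of that direct sum, I obtain
$$\mathrm{rank}\,HF_{\mathrm{red}}(S^3_{p/q}(K),i)=\sum_{s\in\mathbb Z}\Big(\dim A_{\mathrm{red},\lfloor\frac{i+ps}q\rfloor}+\min\{V_{\lfloor\frac{i+ps}q\rfloor},H_{\lfloor\frac{i+ps}q\rfloor}\}\Big)-\delta_i.$$
Only finitely many terms contribute: by Lemma~\ref{lem:kerV}(2), $A_{\mathrm{red},k}=0$ and $V_k=0$ once $k\ge g(K)$, while $\dim A_{\mathrm{red},k}=\dim A_{\mathrm{red},-k}$ and $\min\{V_k,H_k\}=0$ for $|k|$ large, so all the sums below are finite.

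Next I would sum over $i=0,\dots,p-1$. The contribution $-\sum_{i=0}^{p-1}\delta_i$ is already in the desired form, so the work is in the remaining double sum. The key observation is that $(i,s)\mapsto i+ps$ is a bijection from $\{0,\dots,p-1\}\times\mathbb Z$ onto $\mathbb Z$, under which $\lfloor\frac{i+ps}q\rfloor$ becomes $\lfloor n/q\rfloor$; since for each fixed $k\in\mathbb Z$ there are exactly $q$ integers $n$ with $\lfloor n/q\rfloor=k$, we get $\sum_{i=0}^{p-1}\sum_{s\in\mathbb Z}f(\lfloor\frac{i+ps}q\rfloor)=q\sum_{k\in\mathbb Z}f(k)$ for any finitely supported $f$. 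Hence the double sum equals $q\sum_{k\in\mathbb Z}\big(\dim A_{\mathrm{red},k}+\min\{V_k,H_k\}\big)$.

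Finally I would fold the sum over $\mathbb Z$ into a sum over $k\ge0$: since $A^+_k\cong A^+_{-k}$ we have $\dim A_{\mathrm{red},k}=\dim A_{\mathrm{red},-k}$, and since $H_k=V_{-k}$ by (\ref{eq:VH}) while the sequence $(V_j)$ is nonincreasing, $\min\{V_k,H_k\}=\min\{V_k,V_{-k}\}=V_{|k|}$. Therefore $\sum_{k\in\mathbb Z}\big(\dim A_{\mathrm{red},k}+\min\{V_k,H_k\}\big)=\dim A_{\mathrm{red},0}+V_0+2\sum_{k=1}^{\infty}(\dim A_{\mathrm{red},k}+V_k)$, and substituting into the previous paragraph yields the stated formula. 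I do not expect a real obstacle here; the only points demanding care are the multiplicity-$q$ count in the reindexing step and checking that the $(s_i,\mathcal T_{\delta_i})$--summand is subtracted exactly once in each Spin$^c$ structure, neither of which is subtle.
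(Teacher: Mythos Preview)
Your proof is correct and follows essentially the same approach as the paper's own argument: sum Proposition~\ref{prop:CompRed} over all $\spin$ structures, reindex the double sum via the bijection $(i,s)\mapsto i+ps$ to obtain a factor of $q$, and then fold over $k\ge0$ using the symmetries $\dim A_{\mathrm{red},k}=\dim A_{\mathrm{red},-k}$ and $H_k=V_{-k}$. Your write-up is slightly more explicit about the finiteness of the sums and the identity $\min\{V_k,H_k\}=V_{|k|}$, but there is no substantive difference.
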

\begin{proof}
By Proposition~\ref{prop:CompRed}, the rank of $HF_{\mathrm{red}}(S^3_{p/q}(K))$ can be computed by
\begin{eqnarray*}
&&\sum_{i=0}^{p-1}\dim \mathbb A_{\mathrm{red},i}+\sum_{i=0}^{p-1}\sum_{s\in\mathbb Z}\min\{V_{\lfloor \frac{i+ps}q\rfloor},H_{\lfloor \frac{i+ps}q\rfloor}\}-\sum_{i=0}^{p-1}\delta_i\\
&=&\sum_{n\in\mathbb Z}\left(\dim A_{\mathrm{red},\lfloor\frac nq\rfloor}+\min\{V_{\lfloor \frac{n}q\rfloor},V_{\lfloor \frac{q-1-n}q\rfloor}\}\right)-\sum_{i=0}^{p-1}\delta_i\\
&=&q\left(\sum_{s\in\mathbb Z}\dim A_{\mathrm{red},s}+V_0+2\sum_{k=1}^{\infty}V_k\right)-\sum_{i=0}^{p-1}\delta_i.
\end{eqnarray*}
Since $\dim A_{\mathrm{red},s}(K)=\dim A_{\mathrm{red},-s}(K)$, we get our conclusion.
\end{proof}

\section{The genus bound in the $T_{5,2}$ case}\label{sect:GenusBound}

The purpose of this section is to show the following theorem.

\begin{thm}\label{thm:Fibered}
Suppose $K\subset S^3$, $\frac pq\in \mathbb Q$, and $S^3_{p/q}(K)\cong S^3_{p/q}(T_{5,2})$. Then one of the following two cases happens:

1) $K$ is a genus $(n+1)$ fibred knot for some $n\ge1$ with
\begin{equation}\label{eq:DeltaKn}
\Delta_K(T)=(T^{n+1}+T^{-(n+1)})-2(T^n+T^{-n})+(T^{n-1}+T^{1-n})+(T+T^{-1})-1.
\end{equation}

2) $K$ is a genus $1$ knot with $\Delta_K(T)=3T-5+3T^{-1}$.

Moreover,   if $$\frac pq\in\left\{\frac pq>1\right\}\cup \left\{
\frac pq<-6, |q|\ge2\right\},$$ then the number $n$ in the first case must be $1$.
\end{thm}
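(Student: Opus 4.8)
The plan is to combine the Casson--Walker constraint already extracted in Section~\ref{sect:TorusKnot} with the behaviour of knot Floer homology under the surgery formula. First I would record what the hypothesis $S^3_{p/q}(K)\cong S^3_{p/q}(T_{5,2})$ forces on the Alexander polynomial. Arguing as in the proof of Proposition~\ref{prop:GenusBound} via the surgery formula for the Casson--Walker invariant $\lambda$, one gets $\Delta_K''(1)=\Delta_{T_{5,2}}''(1)$. Since $\Delta_{T_{5,2}}(T)=T^3-T^2+1-T^{-2}+T^{-3}$, this says $\Delta_K''(1)=\frac{(25-1)(4-1)}{12}=6$. A finer numerical invariant is also available: comparing $HF_{\mathrm{red}}$ of the two surgeries (using Proposition~\ref{prop:CompRed} or Corollary~\ref{cor:RankRed}), one extracts constraints on the quantities $V_k(K)$, $\dim A_{\mathrm{red},k}(K)$, and hence on the $t_i(K)$ and the $a_s$ (via $a_s=t_{s-1}-2t_s+t_{s+1}$). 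The upshot I would aim for is that $\sum_s |s|\,a_s$, or equivalently the relevant combination of the $V_k$, is pinned down, which together with $\Delta_K''(1)=6$ leaves only finitely many possibilities for the Alexander polynomial as a function of $g=g(K)$.

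Next I would split into the fibred and non-fibred cases. The key dichotomy is whether $S^3_{p/q}(K)$ is an L-space. If $\frac pq\ge 2g(K)-1$ and the surgery is an L-space, then by \cite{OSzLens} $\Delta_K$ has the alternating-coefficient form $(-1)^k+\sum_{i=1}^k(-1)^{k-i}(T^{n_i}+T^{-n_i})$, and by \cite{NiFibred} $K$ is fibred; plugging the constraint $\Delta_K''(1)=6$ and $a_s$-positivity constraints into this form should force $k=3$ with $(n_1,n_2,n_3)=(1,n,n+1)$, giving exactly the polynomial~(\ref{eq:DeltaKn}), and simultaneously force $g(K)=n+1$. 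If the surgery is not an L-space, I expect $HF_{\mathrm{red}}(S^3_{p/q}(K))$ to be nontrivial, and the rank formula of Corollary~\ref{cor:RankRed} applied to $T_{5,2}$ (whose $V_k,A_{\mathrm{red},k}$ are explicitly computable, $g(T_{5,2})=3$, with $S^3_N(T_{5,2})$ an L-space for $N\ge 9$) should pin the rank of $HF_{\mathrm{red}}$ so tightly that the only non-L-space possibility is $g(K)=1$ with $\Delta_K=3T-5+3T^{-1}$ (note $\Delta''(1)=6$ here as well). I would also need that fibredness (via \cite{Gh,NiFibred}, detecting fibredness from $\widehat{HFK}$ in top Alexander grading $\cong\ker v_{g-1}$, Lemma~\ref{lem:kerV}) holds in case 1) — this is automatic once we know the L-space form of $\Delta_K$.

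For the final sharpening: assume $\frac pq>1$, or $\frac pq<-6$ with $|q|\ge2$, and suppose we are in case 1) with $n\ge2$, so $g(K)=n+1\ge 3$. The strategy is to contradict this using the surgery formula more carefully. For $\frac pq>0$ we may assume $p,q>0$; then $S^3_{p/q}(T_{5,2})$ is an L-space only when $\frac pq\ge 5$, and in general the correction terms $d(S^3_{p/q}(K),i)$ and $d(S^3_{p/q}(T_{5,2}),i)$ must agree as unordered sets (up to the identification in Theorem~\ref{thm:MC}), which by Proposition~\ref{prop:Corr} translates into $\sum_i\delta_i(K)=\sum_i\delta_i(T_{5,2})$, i.e.\ into an equality of the $\sum\max\{V_{\lfloor i/q\rfloor},V_{\lfloor(p+q-1-i)/q\rfloor}\}$; since the $V_k$ for $T_{5,2}$ are known ($V_0=1$, $V_1=V_2=\cdots=0$ from $g=3$... more precisely the small values forced by (\ref{eq:DeltaKn}) with $n=1$), and $K$ with $g\ge3$ would have a strictly different $V$-profile (in particular $V_{g-1}\ge 1$ forces larger partial sums when $q\ge2$), we reach a contradiction. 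For the branch $\frac pq<-6$, $|q|\ge2$ I would pass to $-S^3_{p/q}(K)\cong S^3_{-p/q}(\overline K)$, use $d(-Y,\mathfrak s)=-d(Y,\mathfrak s)$, and run the same $V_k$ comparison with the positive slope $-p/q>6$; the hypothesis $|q|\ge2$ is exactly what makes the $\delta_i$-sums sensitive enough to detect $g(K)$. The main obstacle I anticipate is this last step: making the comparison of the $\sum\delta_i$ (equivalently, of correction-term multisets) genuinely force $g(K)=2$ rather than merely bounding it, which requires a careful monotonicity analysis of $V_k(K)$ versus $V_k(T_{5,2})$ across all $\mathrm{Spin}^c$ structures and uses the slope restrictions in an essential, non-cosmetic way; the case $\frac pq>1$ with small $q$ (where the surgery need not be an L-space) is likely the delicate sub-case and may need the explicit form of $HF_{\mathrm{red}}$ from Proposition~\ref{prop:CompRed} rather than just ranks.
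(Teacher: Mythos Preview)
Your outline has the right ingredients at the level of ``use the surgery formula, compare $HF_{\mathrm{red}}$ and correction terms'', but it misses the key technical step that makes the paper's argument go through, and the case split you propose (L-space versus non-L-space) does not actually separate the possibilities in the theorem.

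The crucial point you are missing is the \emph{even-grading argument} (Proposition~\ref{prop:ts} in the paper). From Lemma~\ref{lem:kerV} one only knows $t_s(K)=\chi(\ker v_s)$, so a priori $t_s$ can be negative; then the rank identity from Corollary~\ref{cor:RankRed} does not pin down the $t_s$. What the paper observes is that for $K_0=T_{5,\pm2}$ every $A_s(K_0)$ is supported in the even $\mathbb Z/2\mathbb Z$--grading, hence $HF_{\mathrm{red}}(S^3_{p/q}(K_0))$ is, hence via Proposition~\ref{prop:CompRed} every $\ker v_s(K)$ is as well, and therefore $t_s(K)=\dim\ker v_s(K)\ge0$. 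Once you have $t_s\ge0$ together with $t_0+2\sum_{k>0}t_k=3$, the classification into $t_0=1$ (giving (\ref{eq:DeltaKn}) and fibredness via $\widehat{HFK}(K,g)\cong\ker v_{g-1}\cong\mathbb F$) or $t_0=3$ (giving the genus-one case) is immediate. Your L-space branch cannot replace this: for $\frac pq<3$ the manifold $S^3_{p/q}(T_{5,2})$ is not an L-space, so the Ozsv\'ath--Szab\'o alternating-coefficient constraint on $\Delta_K$ is unavailable; yet the theorem must still yield case~1) with arbitrary $n$ there (note that for $n\ge2$ the polynomial (\ref{eq:DeltaKn}) is not of L-space type, so these are genuine non-L-space outcomes, not something the rank alone can rule out).

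You also have several factual slips about $T_{5,2}$ that would derail the numerics: $g(T_{5,2})=2$, not $3$; the normalized Alexander polynomial is $T^2-T+1-T^{-1}+T^{-2}$, not the degree-$3$ expression you wrote; the L-space threshold is $2g-1=3$, not $9$; and $V_0(T_{5,2})=V_1(T_{5,2})=1$ with $V_k=0$ for $k\ge2$, not $V_1=0$. With the correct values, the paper's argument for $\frac pq>1$ is a one-line comparison: from $V_k(K)\le t_k(K)$ one gets $V_0(K)=1\ge V_1(K)$, hence $\delta_i(K)\le\delta_i(T_{5,2})$; equality of the sums forces equality termwise, and $\delta_q(K)=\delta_q(T_{5,2})=1$ then gives $V_1(K)=1$, so $t_1(K)=1$, i.e.\ $n=1$.

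For the branch $\frac pq<-6$, $|q|\ge2$, your sketch (mirror and compare $\delta_i$) is not what the paper does and would not work as stated: after mirroring to $K_0=T_{5,-2}$ one has $V_k(K_0)=0$ for all $k\ge0$, so the $\delta_i$-sum carries no information and the correction terms agree with those of $L(p,q)$ regardless of $K$. The paper instead analyses the \emph{distribution} of $HF_{\mathrm{red}}$ over $\spin$ structures: $A_{\mathrm{red},\pm n}(K)\cong\mathbb F$ contributes to $\HFtildered$ in a block of $q$ consecutive Spin$^c$ structures, and an affine-isomorphism lemma (Lemma~\ref{lem:AffMatch}) together with conjugation-equivariance (Lemma~\ref{lem:Conjugation}) forces this block to be $\{q,\dots,2q-1\}$ or $\{p-q,\dots,p-1\}$, which in turn forces $n\equiv\pm1\pmod p$; a grading computation then excludes $n>1$ when $q\ge2$. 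This is a genuinely different (and more delicate) mechanism than the $\delta_i$-comparison you propose.
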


Suppose $S^3_0(K)=S^3_{0}(T_{5,2})$, then $K$ is a genus 2 fibred knot by Gabai \cite{G3}. Clearly, it must have the same Alexander polynomial as $T_{5,2}$. From now on we consider the case $\frac pq\ne0$.

\begin{figure}
\begin{center}
\begin{picture}(275,105)
%T_{5,2}
\put(-20,40){\vector(1,0){125}} \put(102,32){$i$}

\put(40,-10){\vector(0,1){115}} \put(44,103){$j$}

\put(0,40){\circle*{4}}
\put(20,40){\circle*{4}}
\put(18,40){\textcolor{red}{\vector(-1,0){15}}}
\put(20,38){\textcolor{red}{\vector(0,-1){15}}}

\put(20,20){\circle*{4}}

\put(40,20){\circle*{4}}
\put(38,20){\textcolor{red}{\vector(-1,0){15}}}
\put(40,18){\textcolor{red}{\vector(0,-1){15}}}

\put(40,0){\circle*{4}}

\put(20,60){\circle*{4}}
\put(40,60){\circle*{4}}
\put(38,60){\textcolor{red}{\vector(-1,0){15}}}
\put(40,58){\textcolor{red}{\vector(0,-1){15}}}

\put(40,40){\circle*{4}}

\put(60,40){\circle*{4}}
\put(58,40){\textcolor{red}{\vector(-1,0){15}}}
\put(60,38){\textcolor{red}{\vector(0,-1){15}}}

\put(60,20){\circle*{4}}

\put(40,80){\circle*{4}}
\put(60,80){\circle*{4}}
\put(58,80){\textcolor{red}{\vector(-1,0){15}}}
\put(60,78){\textcolor{red}{\vector(0,-1){15}}}

\put(60,60){\circle*{4}}

\put(80,60){\circle*{4}}
\put(78,60){\textcolor{red}{\vector(-1,0){15}}}
\put(80,58){\textcolor{red}{\vector(0,-1){15}}}

\put(80,40){\circle*{4}}

%T_{5,-2}

\put(150,40){\vector(1,0){125}} \put(272,32){$i$}

\put(210,-10){\vector(0,1){115}} \put(214,103){$j$}

\put(170,40){\circle*{4}}
\put(170,38){\textcolor{red}{\vector(0,-1){15}}}

\put(170,20){\circle*{4}}

\put(190,20){\circle*{4}}
\put(188,20){\textcolor{red}{\vector(-1,0){15}}}
\put(190,18){\textcolor{red}{\vector(0,-1){15}}}

\put(190,0){\circle*{4}}

\put(210,0){\circle*{4}}
\put(208,0){\textcolor{red}{\vector(-1,0){15}}}

\put(190,60){\circle*{4}}
\put(190,58){\textcolor{red}{\vector(0,-1){15}}}

\put(190,40){\circle*{4}}

\put(210,40){\circle*{4}}
\put(208,40){\textcolor{red}{\vector(-1,0){15}}}
\put(210,38){\textcolor{red}{\vector(0,-1){15}}}

\put(210,20){\circle*{4}}

\put(230,20){\circle*{4}}
\put(228,20){\textcolor{red}{\vector(-1,0){15}}}

\put(210,80){\circle*{4}}
\put(210,78){\textcolor{red}{\vector(0,-1){15}}}

\put(210,60){\circle*{4}}

\put(230,60){\circle*{4}}
\put(228,60){\textcolor{red}{\vector(-1,0){15}}}
\put(230,58){\textcolor{red}{\vector(0,-1){15}}}

\put(230,40){\circle*{4}}

\put(250,40){\circle*{4}}
\put(248,40){\textcolor{red}{\vector(-1,0){15}}}

\end{picture}

\caption{\label{fig:T52} The knot Floer complexes of $T_{5,2}$ and $T_{5,-2}$, where a black dot stands for a copy of $\mathbb F$, and the arrows indicate the differential.}
\end{center}
\end{figure}
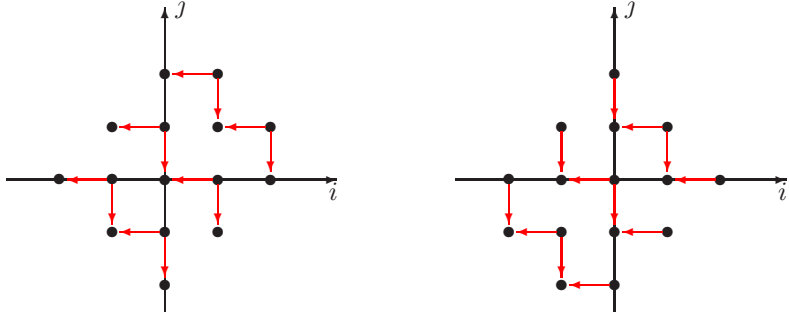

\begin{prop}\label{prop:ts}
Let $K_0$ be either $T_{5,2}$ or $T_{5,-2}$. Suppose $K\subset S^3$ is a knot with $S^3_{p/q}(K)\cong S^3_{p/q}(K_0)$ for $\frac pq>0$, then $$t_s(K)=\dim A_{\mathrm{red},s}(K)+V_s(K)=\dim\ker v_s$$ for any $s\ge0$.
\end{prop}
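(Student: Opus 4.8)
The plan is to convert the homeomorphism $S^3_{p/q}(K)\cong S^3_{p/q}(K_0)$ into numerical constraints using three homeomorphism invariants (the correction terms, the total rank of $HF_{\mathrm{red}}$, and the Casson--Walker invariant), push these through Corollary~\ref{cor:RankRed}, and then combine them with the elementary inequality $\dim\ker v_s\ge t_s(K)$ and an equality that holds for $K_0$ itself. We may assume $p,q>0$.

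First I would record three facts. (a) An orientation-preserving homeomorphism carries the multiset of pairs (Spin$^c$ structure, correction term) of $S^3_{p/q}(K)$ onto that of $S^3_{p/q}(K_0)$; in particular $\sum_{i=0}^{p-1}d(S^3_{p/q}(K),i)=\sum_{i=0}^{p-1}d(S^3_{p/q}(K_0),i)$ irrespective of how Spin$^c$ structures are matched. Writing $d(S^3_{p/q}(\cdot),i)=d(L(p,q),i)-2\delta_i(\cdot)$ (Proposition~\ref{prop:Corr}) and cancelling the lens-space terms gives $\sum_{i=0}^{p-1}\delta_i(K)=\sum_{i=0}^{p-1}\delta_i(K_0)$. (b) $\mathrm{rank}\,HF_{\mathrm{red}}(S^3_{p/q}(K))=\mathrm{rank}\,HF_{\mathrm{red}}(S^3_{p/q}(K_0))$. (c) Exactly as in the proof of Proposition~\ref{prop:GenusBound}, the Casson--Walker surgery formula applied to $K$ and to $K_0$, together with $\lambda(S^3_{p/q}(K))=\lambda(S^3_{p/q}(K_0))$, yields $\Delta_K''(1)=\Delta_{K_0}''(1)$; this step uses only $p/q\ne0$, not the size hypothesis of Proposition~\ref{prop:GenusBound}.

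Next, for a knot $K'\subset S^3$ set $W(K'):=\dim A_{\mathrm{red},0}(K')+V_0(K')+2\sum_{k\ge1}(\dim A_{\mathrm{red},k}(K')+V_k(K'))$, a finite sum since $\ker v_k=0$ for $k\ge g(K')$; by Lemma~\ref{lem:kerV}(1) this equals $\dim\ker v_0(K')+2\sum_{k\ge1}\dim\ker v_k(K')$. Corollary~\ref{cor:RankRed} reads $\mathrm{rank}\,HF_{\mathrm{red}}(S^3_{p/q}(K'))=q\,W(K')-\sum_{i=0}^{p-1}\delta_i(K')$, so (a) and (b) give $W(K)=W(K_0)$. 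On the other hand $\ker v_s$ is a finite-dimensional $\mathbb Z/2\mathbb Z$-graded $\mathbb F$-space with Euler characteristic $t_s(K')$, hence $\dim\ker v_s\ge t_s(K')$ for every $s\ge0$; summing, $W(K')\ge t_0(K')+2\sum_{k\ge1}t_k(K')$, and a direct computation from $a_s=t_{s-1}-2t_s+t_{s+1}$ shows the right-hand side equals $\tfrac12\Delta_{K'}''(1)$. For $K_0$ this last inequality is an equality: when $K_0=T_{5,2}$ the knot is an L-space knot, so every $A^+_k\cong\mathcal T^+$, whence $A_{\mathrm{red},k}=0$ and $\ker v_k\cong\mathcal T_{V_k}$ with $V_k=t_k$; when $K_0=T_{5,-2}$ one reads $\dim\ker v_k(K_0)=t_k(K_0)$ directly off the knot Floer complex in Figure~\ref{fig:T52}. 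Thus $W(K_0)=\tfrac12\Delta_{K_0}''(1)$.

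Finally, chaining these, $W(K)=W(K_0)=\tfrac12\Delta_{K_0}''(1)=\tfrac12\Delta_K''(1)=t_0(K)+2\sum_{k\ge1}t_k(K)\le W(K)$ by (c) and the computation above, so equality holds throughout. Since $W(K)-\big(t_0(K)+2\sum_{k\ge1}t_k(K)\big)=\big(\dim\ker v_0(K)-t_0(K)\big)+2\sum_{k\ge1}\big(\dim\ker v_k(K)-t_k(K)\big)$ is a sum of nonnegative quantities with positive coefficients that vanishes, each summand is zero: $\dim\ker v_s(K)=t_s(K)$ for all $s\ge0$, which together with Lemma~\ref{lem:kerV}(1) is the assertion. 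The step I expect to require the most care is the correction-term bookkeeping in (a) — making sure the sum of correction terms is genuinely insensitive to the permutation of Spin$^c$ structures induced by the homeomorphism — and, secondarily, carrying out the $K_0=T_{5,-2}$ computation cleanly; everything else is formal once Corollary~\ref{cor:RankRed} and Lemma~\ref{lem:kerV} are in hand.
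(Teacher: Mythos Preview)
Your argument is correct, but it takes a genuinely different route from the paper's proof. The paper argues purely via the absolute $\mathbb Z/2\mathbb Z$--grading: from Figure~\ref{fig:T52} one sees that every $A^+_s(K_0)$ is supported in even grading, so by Proposition~\ref{prop:CompRed} the group $HF_{\mathrm{red}}(S^3_{p/q}(K_0))$ is supported in even grading; the homeomorphism transfers this to $S^3_{p/q}(K)$, and Proposition~\ref{prop:CompRed} applied to $K$ then forces each $A_{\mathrm{red},s}(K)$ (hence each $\ker v_s(K)$) to be supported in even grading, giving $\dim\ker v_s=\chi(\ker v_s)=t_s(K)$ immediately. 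Your approach instead converts everything to numbers: you match $\sum_i\delta_i$ via correction terms, match the total rank of $HF_{\mathrm{red}}$, and bring in the Casson--Walker invariant to pin down $\Delta_K''(1)$; the squeeze $W(K)=W(K_0)=\tfrac12\Delta_{K_0}''(1)=\tfrac12\Delta_K''(1)\le W(K)$ then forces termwise equality. The paper's proof is shorter and avoids the extra Casson--Walker input, while yours has the appeal of being entirely numerical and making the role of the identity $t_0+2\sum_{k\ge1}t_k=\tfrac12\Delta''(1)$ explicit. Both the bookkeeping in your step~(a) and the $T_{5,-2}$ check are fine; no gap.
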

\begin{proof}
The knot Floer chain complexes of $T_{5,2}$ and $T_{5,-2}$ are illustrated in Figure~\ref{fig:T52}. It is easy to check that all $A_s(K_0)$ on homology level are supported in the even $\mathbb Z/2\mathbb Z$--grading. So $HF_{\mathrm{red}}(S^3_{p/q}(K_0))$ is supported in the even grading by Proposition~\ref{prop:CompRed}. Now it follows from Proposition~\ref{prop:CompRed} that $\ker v_s(K)$ is supported in the even grading for all $s\ge0$. By Lemma~\ref{lem:kerV}, $t_s(K)=\chi(\ker v_s)=\dim\ker v_s=\dim A_{\mathrm{red},s}(K)+V_s(K)$.
\end{proof}

\begin{prop}\label{prop:TwoCase}
Conditions are as in Proposition~\ref{prop:ts}. Then either $K$ is a genus $(n+1)$ fibred knot for some $n\ge1$, and $\Delta_K(T)$ is given by (\ref{eq:DeltaKn}), or $K$ is a genus $1$ knot with $\Delta_K(T)=3T-5+3T^{-1}$.
\end{prop}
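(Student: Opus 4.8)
The plan is to distill the homeomorphism $S^3_{p/q}(K)\cong S^3_{p/q}(K_0)$ into a single numerical identity among the torsion coefficients $t_s(K)$ and then run an elementary enumeration. Normalize so that $p,q>0$. Two quantities are preserved by an orientation-preserving homeomorphism of rational homology spheres: the total correction term $\sum_{\mathfrak s}d(\,\cdot\,,\mathfrak s)$, and the rank of $HF_{\mathrm{red}}$. By Proposition~\ref{prop:Corr}, $d(S^3_{p/q}(K),i)=d(L(p,q),i)-2\delta_i(K)$ under the identification $\mathrm{Spin}^c\cong\mathbb Z/p$, so summing over all $i$ and cancelling the common term $\sum_i d(L(p,q),i)$ yields $\sum_{i=0}^{p-1}\delta_i(K)=\sum_{i=0}^{p-1}\delta_i(K_0)$. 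Only total sums enter here, so the labelling of $\mathrm{Spin}^c$ structures is immaterial.

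Next I would substitute this into Corollary~\ref{cor:RankRed}. By Proposition~\ref{prop:ts} applied to $K$, and by Lemma~\ref{lem:kerV} together with the even-gradedness of the relevant groups for $T_{5,\pm2}$ (cf.\ the proof of Proposition~\ref{prop:ts}), one has $\dim A_{\mathrm{red},k}+V_k=t_k=\dim\ker v_k$ for each of $K$ and $K_0$, so Corollary~\ref{cor:RankRed} reads $\mathrm{rk}\,HF_{\mathrm{red}}(S^3_{p/q}(L))=q\big(t_0(L)+2\sum_{k\ge1}t_k(L)\big)-\sum_i\delta_i(L)$ for $L\in\{K,K_0\}$. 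Since the two ranks agree and the two $\delta$-sums agree, and $q>0$, we get $t_0(K)+2\sum_{k\ge1}t_k(K)=t_0(K_0)+2\sum_{k\ge1}t_k(K_0)$. Both $T_{5,2}$ and its mirror $T_{5,-2}$ have Alexander polynomial $T^2-T+1-T^{-1}+T^{-2}$, hence $t_0(K_0)=t_1(K_0)=1$ and $t_s(K_0)=0$ for $s\ge2$, so the right-hand side equals $3$.

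Now the combinatorics: since each $t_k(K)\ge0$ and $t_0(K)+2\sum_{k\ge1}t_k(K)=3$, the integer $t_0(K)$ is odd, so either $t_0(K)=3$ with $t_k(K)=0$ for all $k\ge1$, or $t_0(K)=1$ with exactly one $t_n(K)=1$ for some $n\ge1$ and all other $t_k(K)=0$. In each case I would recover $\Delta_K(T)$ from $a_s=t_{s-1}-2t_s+t_{s+1}$ for $s\ge1$, from $\sum_{s\ge1}a_s=t_0-t_1$ (a telescoping identity), and from $a_0=1-2\sum_{s\ge1}a_s$ coming from $\Delta_K(1)=1$: the first possibility gives $\Delta_K(T)=3T-5+3T^{-1}$, and the second gives precisely the polynomial in (\ref{eq:DeltaKn}), where the small values $n=1,2$ should be verified directly so that the stated formula is literally correct. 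The genus is then read off from Lemma~\ref{lem:kerV}: since $\dim\ker v_k=t_k$ for $k\ge0$, the largest $k$ with $\ker v_k\ne0$ is $0$ in the first case and $n$ in the second, giving $g(K)=1$, respectively $g(K)=n+1$.

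Finally, fibredness in the second case follows because $\widehat{HFK}(K,g(K))\cong\ker v_{g(K)-1}=\ker v_n$ by Lemma~\ref{lem:kerV}(2), which has dimension $t_n(K)=1$; thus $\widehat{HFK}(K,g(K))\cong\mathbb F$ and $K$ is fibred by \cite{NiFibred}. The point requiring the most care is the bookkeeping behind the two invariance inputs — that the $d(L(p,q),i)$ contributions genuinely cancel, and that Corollary~\ref{cor:RankRed} is applied to $K$ and $K_0$ with the very same $p,q$ — but once the single identity $t_0+2\sum_{k\ge1}t_k=3$ is secured, everything else is the elementary analysis above. Note that the Casson--Walker computation $\Delta_K''(1)=\Delta_{T_{5,2}}''(1)=6$ of Proposition~\ref{prop:GenusBound} is consistent with all of these polynomials but is not needed for this proposition.
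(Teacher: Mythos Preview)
Your argument is correct and follows essentially the same route as the paper: equate $\sum_i\delta_i$ via the correction terms, equate the total reduced ranks via Corollary~\ref{cor:RankRed}, conclude $t_0+2\sum_{k\ge1}t_k=3$, and then do the obvious case split together with Lemma~\ref{lem:kerV} and \cite{NiFibred}. The paper is terser about the bookkeeping you spell out (cancelling the $d(L(p,q),i)$'s and applying the $t_k=\dim\ker v_k$ identity to $K_0$), but there is no substantive difference.
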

\begin{proof}
By Proposition~\ref{prop:Corr},
\begin{equation}\label{eq:SumDelta}
\sum_{i=0}^{p-1}\delta_i(K)=\sum_{i=0}^{p-1}\delta_i(K_0).
\end{equation}
By Corollary~\ref{cor:RankRed} and Proposition~\ref{prop:ts}, we also have
\begin{equation}\label{eq:Sum=3}
t_0(K)+2\sum_{k>0}t_k(K)=t_0(K_0)+2\sum_{k>0}t_k(K_0)=3,
\end{equation}
and $t_k(K)\ge0$,
so $t_0(K)$ is equal to either $1$ or $3$.

If $t_0(K)=1$, then by (\ref{eq:Sum=3}) there is exactly one $n>0$ such that $\dim\ker v_n=t_n=1$, and $\dim\ker v_k=t_k$ is $0$ for $k>0$ and $k\ne n$. By Lemma~\ref{lem:kerV}, $g(K)=n+1$ and
(\ref{eq:DeltaKn}) holds.
Lemma~\ref{lem:kerV} also implies that $\widehat{HFK}(K,n+1)\cong\ker v_n\cong\mathbb F$, so $K$ is fibred \cite{NiFibred}.

If $t_0(K)=3$, then $t_s(K)=0$ for all $s>0$. It follows from Lemma~\ref{lem:kerV} and Proposition~\ref{prop:ts} that $g(K)=1$. Clearly, in this case $\Delta_K(T)=3T-5+3T^{-1}$.
\end{proof}

This finishes the proof of the first part of Theorem~\ref{thm:Fibered}.

\subsection{The case when $K_0=T_{5,2}$ and $\frac pq>1$}\label{subsect:>1}

In this subsection we will consider the case when $S^3_{p/q}(K)\cong S^3_{p/q}(K_0)$, where $K_0=T_{5,2}$, $p>q>0$, and $t_0(K)=t_n(K)=1$.
We can compute $V_0(K_0)=V_1(K_0)=1$, and $V_k(K_0)=0$ when $k\ge2$. Using (\ref{eq:VH}) and (\ref{eq:SumDelta}), we see that $V_0(K)>0$. Since $V_k\le t_k$, $V_0(K)=1\ge V_1(K)$ and $V_k(K)=0$ when $k\ge2$.

Now we have $V_k(K)\le V_k(K_0)$ for any $k\ge0$, so $\delta_i(K)\le\delta_i(K_0)$ for all $0\le i\le p-1$. In light of (\ref{eq:SumDelta}) we must have $\delta_i(K)=\delta_i(K_0)$ for all $0\le i\le p-1$.
If $V_1(K)=0$,
since $p>q$, we have $\delta_q(K)=\max\{V_1(K),V_{\lfloor\frac{p-1}q\rfloor}(K)\}=0$, but $\delta_q(K_0)=\max\{V_1(T_{5,2}),V_{\lfloor\frac{p-1}q\rfloor}(T_{5,2})\}=1$, a contradiction.
This shows that $V_1(K)=1$ hence $t_1(K)=1$. By Proposition~\ref{prop:TwoCase}, we have $g(K)=2$ and $\Delta_K(T)=(T^2+T^{-2})-(T+T^{-1})+1$.

\subsection{The case when $K_0=T_{5,-2}$ and $\frac pq>6$}\label{subsect:<-6}

In this subsection we will consider the case when $S^3_{p/q}(K)\cong S^3_{p/q}(K_0)$, where $K_0=T_{5,-2}$, $p>6q>0$, and $t_0(K)=t_n(K)=1$.

For the knot $K_0$, we have $V_k(K_0)=0$ when $k\ge0$. It then follows from (\ref{eq:SumDelta}) that $V_k(K)=0$ when $k\ge0$. By Proposition~\ref{prop:Corr}, we have
\begin{equation}\label{eq:NegCorr}
d(S^3_{p/q}(K),i)=d(S^3_{p/q}(K_0),i)=d(L(p,q),i), \quad i=0,1,\dots,p-1.
\end{equation}
By Lemma~\ref{lem:kerV}, we see that $A_{\mathrm{red},k}(K)\cong\mathbb F$ when $k=0,\pm n$, and $A_{\mathrm{red},k}(K)=0$ for all other $k$. Following Convention~\ref{conv:Gr}, $A_{\mathrm{red},k}(K)$ is absolutely graded. We assume $A_{\mathrm{red},0}(K)\cong\mathbb F_{(d_0)}$, $A_{\mathrm{red},\pm n}(K)\cong \mathbb F_{(d_n)}$, where $\mathbb F_{(d)}$ means a copy of $\mathbb F$ supported in grading $d$.

By Proposition~\ref{prop:CompRed}, $$HF_{\mathrm{red}}(S^3_{p/q}(K),i)\cong\mathbb A_{\mathrm{red},i}$$
for any $i\in\mathbb Z/p\mathbb Z$.

\begin{lem}\label{lem:AffMatch}
(1) Suppose that $\frac p2>q\ge2$, and let $N_q=\{0,1,\dots,q-1\}$. If $\phi\co\mathbb Z/p\mathbb Z\to\mathbb Z/p\mathbb Z$ is an affine isomorphism such that
$\phi(N_q)=N_q$, then either $\phi$ is the identity or $\phi(i)\equiv q-1-i\pmod p$.

(2) Suppose that $\frac p6>q\ge3$, and let $N^-_q=\{p-q,\dots,p-1\}$, $N^+_q=\{q,q+1,\dots,2q-1\}$. If $\phi\co\mathbb Z/p\mathbb Z\to\mathbb Z/p\mathbb Z$ is an affine isomorphism such that $$\phi(N_q)\subset N^-_q\cup N^+_q,$$
then either $\phi(N_q)=N^-_q$ or $\phi(N_q)=N^+_q$.
\end{lem}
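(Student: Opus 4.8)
The plan is to treat both parts by the same elementary device: an affine isomorphism $\phi\co\mathbb Z/p\mathbb Z\to\mathbb Z/p\mathbb Z$ has the form $\phi(i)\equiv ai+b\pmod p$ with $a$ a unit mod $p$, and the key invariant to track is how $\phi$ acts on \emph{gaps}, i.e. on the cyclic distances between consecutive elements of the image set. For part (1), the set $N_q=\{0,1,\dots,q-1\}$ is, as a subset of the cyclic group $\mathbb Z/p\mathbb Z$, a single ``arc'' of $q$ consecutive residues; since $\phi(N_q)=N_q$ and $\phi$ is affine (hence maps arithmetic progressions with common difference $a$ to arithmetic progressions with common difference $1$ only if the internal spacing is respected), I would argue that $\phi$ must send the arc $N_q$ to itself either preserving or reversing the cyclic order. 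Concretely: the two ``boundary'' elements of the arc $N_q$ are $0$ and $q-1$, characterized inside $N_q$ as the unique elements whose cyclic neighbours outside $N_q$ (namely $-1$ and $q$) lie outside $N_q$; here we need $\frac p2>q$, equivalently $q<p-q$, so that $N_q$ is a \emph{proper} arc and its complement is the longer arc, making the notion of ``boundary of the arc'' well-defined. An affine map permuting $N_q$ must permute $\{0,q-1\}$; if $\phi(0)=0$ then $\phi$ fixes the arc pointwise-compatibly and comparing $\phi(1)=a$ with the requirement $\{a+b,\dots\}=N_q$ forces $a=1,b=0$; if instead $\phi(0)=q-1$, then $b=q-1$ and $\phi(q-1)=0$ gives $a(q-1)+q-1\equiv 0$, i.e. $a\equiv -1\pmod p$ after using $\gcd(q-1,p)$ considerations, yielding $\phi(i)\equiv q-1-i$.

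For part (2) the target is the disjoint union of two arcs $N^-_q$ and $N^+_q$, each of length $q$, and the source $N_q$ is a single arc of length $q$. The hypothesis $\frac p6>q\ge3$ guarantees that the three relevant arcs $N_q$, $N^+_q$, $N^-_q$ together occupy fewer than $p/2$ residues and are pairwise ``well-separated'' in the cycle, so no two of them can be merged by $\phi$ into a longer arc. The strategy: since $\phi(N_q)$ is the affine image of a $q$-term arithmetic progression of step $1$, it is a $q$-term arithmetic progression of step $a$ in $\mathbb Z/p\mathbb Z$; I want to show this progression must be an honest arc (consecutive residues) and hence, being contained in $N^-_q\cup N^+_q$ which has no arc of length $>q$ other than the two pieces themselves, must equal one of them. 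To see $\phi(N_q)$ is an arc: if $a\not\equiv\pm1$, the progression $\{b,a+b,\dots,(q-1)a+b\}$ with $q\ge3$ has all ``gaps'' of cyclic length $\min(|a|,p-|a|)\ge2$, so its elements are spread out and cannot fit inside a union of two arcs each of length $q$ without some gap being forced down to $1$ — a contradiction after a counting/pigeonhole argument comparing total gap length $p$ against the $2q$-element ambient set. So $a\equiv\pm1$, $\phi(N_q)$ is a genuine arc of length $q$ inside $N^-_q\cup N^+_q$, and since those are the only length-$q$ arcs there (again using the separation from $\frac p6>q$), we get $\phi(N_q)=N^-_q$ or $\phi(N_q)=N^+_q$.

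I expect the main obstacle to be the case analysis in part (2) showing that $a\equiv\pm1$: one must rule out all other units $a$ cleanly, and the cleanest route is probably not brute pigeonhole but rather: the image $\phi(N_q)$ contains two elements $\phi(0),\phi(1)$ at cyclic distance $\min(|a|,p-|a|)$, and it contains the full progression, so if $|a|\ge2$ the ``convex hull'' (shortest arc containing $\phi(N_q)$) has length $\ge (q-1)\cdot 2 - \text{(overlaps)}$, which already exceeds the length of either $N^\pm_q$; one then has to check $\phi(N_q)$ cannot straddle both arcs, which is where the gap between $N^-_q$ (near $p-1$) and $N^+_q$ (near $2q-1$) — of size roughly $p-3q$, positive since $p>6q$ — does the work, because an arithmetic progression of step $\pm2$ cannot jump across a gap of that size while landing only in the two small arcs. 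I would write this out via the explicit inequalities $q<p-q$ (part 1) and $2q<p-q$, $3q<p-3q$ (part 2) coming from the stated hypotheses, and keep the bookkeeping minimal by reducing everything to statements about lengths of arcs and of the gaps between them.
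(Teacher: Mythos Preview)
Your argument for part~(1) is circular. You characterize $\{0,q-1\}$ as the ``boundary'' of the arc $N_q$ --- the elements having a cyclic neighbour in the complement --- and then assert that an affine $\phi$ with $\phi(N_q)=N_q$ must permute this pair. But the notion of cyclic neighbour is the relation $i\sim i\pm1$, and $\phi(i)=ai+b$ respects this relation precisely when $a\equiv\pm1\pmod p$, which is what you are trying to prove. There is a second gap: even granting $\phi(0)=q-1$ and $\phi(q-1)=0$, your deduction of $a\equiv-1$ from $(a+1)(q-1)\equiv0\pmod p$ needs $\gcd(q-1,p)=1$, which is not part of the hypotheses (e.g.\ $p=15$, $q=4$). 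The paper avoids both problems by looking at difference sets: for $i\in N_q$ set $D_i=\{j-i:j\in N_q,\,j\ne i\}$; since $\phi|_{N_q}$ is a bijection, $a=\phi(\phi^{-1}(i)+1)-i\in D_i$ whenever $\phi^{-1}(i)\ne q-1$, so $a$ lies in all but exactly one $D_i$; as $D_0\cap D_{q-1}=\emptyset$ (here $p>2q$ is used), the missing index is $0$ or $q-1$, and hence $a\in D_1\cap\cdots\cap D_{q-2}=\{-1,1\}$.

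For part~(2), your plan to first prove $a\equiv\pm1$ is stronger than required, and the sketched convex-hull bound does not survive wraparound: once $(q-1)\min(|a|,p-|a|)\ge p$ the shortest arc containing $\phi(N_q)$ is all of $\mathbb Z/p\mathbb Z$, and nothing in your hypotheses prevents this. The paper proceeds differently. After normalizing $0<a\le\frac p2$, it computes the three difference sets $D^+$ (from $N^-_q$ to $N^+_q$), $D^-$ (from $N^+_q$ to $N^-_q$), and $D$ (within one arc), and makes a dichotomy on $a$: if $a\le q$ then $a\notin D^+\cup D^-$, so each step $\phi(i)\to\phi(i+1)$ stays within one arc and $\phi(N_q)$ is entirely $N^+_q$ or $N^-_q$; if $q<a\le\frac p2<p-3q$ then $a\notin D^-\cup D$, so every step is forced to go from $N^-_q$ to $N^+_q$, which is impossible for $q\ge3$ since $\phi(1)$ would have to lie in both arcs.
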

\begin{proof}
(1) The result is obvious if $q=2$, now we assume $q>2$.
Suppose $\phi(i)\equiv ai+b\pmod p$.
For any $i\in N_q$, let $D_i=\{j-i|j\in N_q,\: j\ne i\}\subset \mathbb Z/p\mathbb Z$. Then the number $a$ satisfies that $a$ is contained in all but exactly one $D_i$. Noticing that $D_0\cap D_{q-1}=\emptyset$, so $a$ must be contained in $D_1\cap D_2\cap\cdots\cap D_{q-2}=\{-1,1\}$. If $a=1$, then $\phi=\mathrm{id}$. If $a=-1$, then $\phi(i)\equiv q-1-i\pmod p$.

(2) Suppose there exists such an affine isomorphism $\phi(i)\equiv ai+b\pmod p$ for some integers $a,b$. Without loss of generality, we can suppose $0<a\le \frac p2$.

Let $D^{\pm}\subset\mathbb Z/p\mathbb Z$ be the set of the differences between any element in $N^{\pm}_q$ and any element in $N^{\mp}_q$, and let $D\subset\mathbb Z/p\mathbb Z$ be the set of the differences between any two elements in $N^{\pm}_q$. Then
\begin{eqnarray*}
D^+&=&\{q+1,q+2,\dots,3q-1\},\\
D^-&=&\{p-3q+1,\dots,p-q-1\} ,\\
D&=&\{1,2,\dots,q-1\}\cup\{p-q+1,\dots,p-1\}.
\end{eqnarray*}

If $a\le q$, then $a\notin D^+\cup D^-$. So $\phi(N_q)$ must be either $N^-(q)$ or $N^+(q)$.

If $a>q$, we also have $a\le \frac p2< p-3q$, so $a\notin D^-\cup D$. This means that if $0\le i\le q-2$, then $\phi(i)\in N^-_q,\phi(i+1)\in N^+_q$, which forces $q\le2$, a contradiction.
\end{proof}

Given a rational homology sphere $Y$ and a Spin$^c$ structure $\mathfrak s$, let $\HFtildered(Y,\mathfrak s)$ be the group $HF_{\mathrm{red}}(Y,\mathfrak s)$, with the absolute grading shifted down by $d(Y,\mathfrak s)$.

\begin{lem}\label{lem:Conjugation}
Suppose $p>3q>0$, then the conjugation in $\spin(S^3_{p/q}(K))$ is given by $$J(i)\equiv q-1-i\pmod p.$$
\end{lem}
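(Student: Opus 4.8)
The plan is to pin down the conjugation $J$ on $\spin(S^3_{p/q}(K))$ by combining two pieces of information: its action must preserve the correction terms $d(S^3_{p/q}(K),i)$ by \eqref{eq:CorrSymm}, and it must preserve the absolute gradings of $HF_{\mathrm{red}}$ by \eqref{eq:HFconj}. First I would recall that for the lens space $L(p,q)$ the conjugation acts by $i\mapsto q-1-i\pmod p$; since $d(S^3_{p/q}(K),i)=d(L(p,q),i)$ by \eqref{eq:NegCorr}, the function $i\mapsto d(S^3_{p/q}(K),i)$ and the function $i\mapsto d(L(p,q),i)$ agree. The conjugation $J$ on $\spin(S^3_{p/q}(K))$ is an affine involution of $\mathbb Z/p\mathbb Z$ (this is the standard structure on the $\spin$ action of conjugation under the identification of Theorem~\ref{thm:MC}), so $J(i)\equiv \varepsilon i+c\pmod p$ with $\varepsilon=\pm1$. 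I would rule out $\varepsilon=+1$ first: an affine involution with $\varepsilon=1$ is $i\mapsto i+c$ with $2c\equiv0$; if $c\ne0$ this is a free involution on $\mathbb Z/p\mathbb Z$ which forces $p$ even and $c=p/2$, and I would exclude this by showing it is incompatible with the reduced-homology grading data — only the three $\spin$ structures with nontrivial $HF_{\mathrm{red}}$ (namely those indexed by $0,\pm n$ under the identification) can be permuted among themselves, and $\{0,n,-n\}$ is not a union of orbits of $i\mapsto i+p/2$ unless $2n\equiv0$ or $n\equiv p/2$, cases which one checks directly cannot occur for $p>3q$ given that $t_0=t_n=1$ and the explicit form of $\delta_i(K_0)$. (If $c=0$ then $J=\mathrm{id}$, which is absurd since $p>1$.)

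With $\varepsilon=-1$ we have $J(i)\equiv -i+c\pmod p$, and it remains to identify $c$. Here I would use that $J$ fixes exactly the $\spin$ structures $i$ with $2i\equiv c$, and that $J$ must carry the reduced part to the reduced part preserving gradings: the group $HF_{\mathrm{red}}(S^3_{p/q}(K),i)\cong\mathbb A_{\mathrm{red},i}$ is nonzero precisely for the $p$-periodic pattern of $s$ with $\lfloor(i+ps)/q\rfloor\in\{0,\pm n\}$, i.e. for $i$ in certain residue classes determined by $q$. The simplest such constraint: $\mathbb A_{\mathrm{red},i}$ contains the $A_{\mathrm{red},0}$ summand exactly when some $\lfloor(i+ps)/q\rfloor=0$, which happens for $i\in N_q=\{0,1,\dots,q-1\}$. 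Since the grading of the $A_{\mathrm{red},0}$ contribution differs from the grading of the $A_{\mathrm{red},\pm n}$ contribution (these land in different Maslov gradings because $d_0$ and $d_n$ are generically distinct, and when they coincide one separates them by the $U$-action / by the ambient grading shifts \eqref{eq:GrOnB}), the grading-preserving map $J$ must send $N_q$ to itself. Now Lemma~\ref{lem:AffMatch}(1) applies (we are in the range $p>3q>0\supset p>2q>0$, $q\ge2$; the case $q=1$ is trivial since then $N_q=\{0\}$ and $J(0)=0$ forces $c=0$, contradicting $J\ne\mathrm{id}$, so in fact $q\ge2$ here — or one handles $q=1$ by noting there is a unique $\spin$ structure fixed pattern): either $J=\mathrm{id}$, excluded, or $J(i)\equiv q-1-i\pmod p$, which is the claim.

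The main obstacle I anticipate is the bookkeeping needed to be sure the gradings of the three reduced summands are genuinely distinguishable, so that ``$J$ preserves reduced Floer homology with its grading'' really does force $J(N_q)=N_q$ rather than some coarser condition. Concretely, one must show that the grading shift \eqref{eq:GrOnB} applied to the $(s,A_{\mathrm{red},0})$ and $(s',A_{\mathrm{red},\pm n})$ summands inside a single $\mathbb A_{\mathrm{red},i}$ produces distinct absolute $\mathbb Q$-gradings (or distinct graded pieces after also using the $\mathbb Z/2\mathbb Z$-grading and the $U$-module structure), uniformly in $i$. This is where the hypothesis $p>3q$ does real work — it guarantees the relevant floor values $\lfloor(i+ps)/q\rfloor$ separating the $0$-pattern from the $\pm n$-pattern are spread far enough apart that the grading shifts cannot accidentally coincide. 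Once that separation is established, everything else is an application of \eqref{eq:CorrSymm}, \eqref{eq:HFconj}, \eqref{eq:NegCorr}, and Lemma~\ref{lem:AffMatch}(1).
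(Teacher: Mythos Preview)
Your proposal has a real gap that you yourself flag as the ``main obstacle'' but do not resolve. Working directly with $K$, you need $J(N_q)=N_q$, and for that you need the contributions of $A_{\mathrm{red},0}(K)$ and $A_{\mathrm{red},\pm n}(K)$ to be distinguishable by their absolute gradings. But $d_0$ and $d_n$ are \emph{unknown} at this stage (indeed, the whole purpose of this subsection is to pin down $n$), and for general $n$ the $q$-block of Spin$^c$ structures receiving the $A_{\mathrm{red},\pm n}$ contribution can land anywhere in $\mathbb Z/p\mathbb Z$; the hypothesis $p>3q$ says nothing about its location. So you cannot rule out that $J$ swaps $N_q$ with the block associated to $n$ or to $-n$, and Lemma~\ref{lem:AffMatch}(1) never gets off the ground. (Your $q=1$ discussion is also garbled: with $\varepsilon=-1$ and $J(0)=0$ you get $c=0$, hence $J(i)\equiv -i\equiv q-1-i$, which is exactly the desired conclusion, not a contradiction. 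And your orbit argument for $\varepsilon=+1$ speaks of ``three Spin$^c$ structures with nontrivial $HF_{\mathrm{red}}$'', but there are $3q$ of them, in three blocks of $q$.)

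The paper avoids all of this by working with $K_0=T_{5,-2}$ rather than $K$. For $K_0$ everything is explicit: formula \eqref{eq:K0Surg} shows the block $N_q$ carries $\mathbb F_{(0)}$ while $N_q^{\pm}$ carry $\mathbb F_{(2)}$, and these gradings are visibly different. Then $J(N_q)=N_q$ follows at once from \eqref{eq:HFconj}, Lemma~\ref{lem:AffMatch}(1) gives $J(i)\equiv q-1-i\pmod p$ for $q\ge2$, and $q=1$ is handled by also reading off $J(1)=-1$ from \eqref{eq:K0Surg}. Finally --- and this is the step you essentially stated in your opening line but then abandoned --- the identification of $\spin(S^3_{p/q}(\,\cdot\,))$ with $\mathbb Z/p\mathbb Z$ in Theorem~\ref{thm:MC} is purely homological, so the formula for $J$ obtained on $S^3_{p/q}(K_0)$ transfers verbatim to $S^3_{p/q}(K)$. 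You had this ingredient in hand (you recall the conjugation formula for $L(p,q)$ at the outset), but then pursued a direct attack on $K$ that cannot be completed with the information available.
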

\begin{proof}
When $p>3q$, we can see that
\begin{equation}\label{eq:K0Surg}
\HFtildered(S^3_{p/q}(K_0),i)\cong\left\{
\begin{array}{ll}
\mathbb F_{(0)}, &\text{when }0\le i\le q-1,\\
\mathbb F_{(2)}, &\text{when }q\le i\le 2q-1\text{ or }p-q\le i\le p-1,\\
0, &\text{otherwise.}
\end{array}
\right.
\end{equation}
The conjugation $J$ on $\spin(S^3_{p/q}(K_0))$ is an affine involution on $\mathbb Z/p\mathbb Z$. By Equation (\ref{eq:HFconj}), we have $J(N_q)=N_q$. Since $p>2$, $J$ is not the identity map. If $q\ge2$, it follows from Lemma~\ref{lem:AffMatch} that $J(i)\equiv q-1-i\pmod p$ for $S^3_{p/q}(K_0)$. If $q=1$, by (\ref{eq:K0Surg}) we have $$J(0)=0,\quad J(1)=-1,$$ so we must have $J(i)\equiv -i\pmod p$.
Since the identification of $\spin(S^3_{p/q}(K))$ with $\mathbb Z/p\mathbb Z$ is purely homological, $J(i)\equiv q-1-i\pmod p$ should also be true for $S^3_{p/q}(K)$.
\end{proof}

In fact, the above lemma is true for any $p,q>0$ and $\frac pq$ surgery on any knot in homology spheres. This can be proved by examining the proof of \cite[Proposition~4.8]{OSzAbGr}.

\begin{lem}\label{lem:SameSpin}
Suppose that $\frac p6>q>0$, and $S^3_{p/q}(K)\cong S^3_{p/q}(K_0)$. Then
$$HF^+(S^3_{p/q}(K),i)\cong HF^+(S^3_{p/q}(K_0),i)$$
as $\mathbb Q$--graded groups
for any $i\in\mathbb Z/p\mathbb Z$, and the isomorphism respects the $U$--action.
\end{lem}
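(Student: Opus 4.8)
Write $Y=S^3_{p/q}(K)$ and $Y_0=S^3_{p/q}(K_0)$, and fix an orientation--preserving homeomorphism $f\co Y\to Y_0$. Since the identification of $\spin(S^3_{p/q}(\cdot))$ with $\mathbb Z/p\mathbb Z$ supplied by Theorem~\ref{thm:MC} is purely homological (as noted right after Lemma~\ref{lem:Conjugation}), $f$ carries it to an affine isomorphism $\phi\co\mathbb Z/p\mathbb Z\to\mathbb Z/p\mathbb Z$, $\phi(i)=ai+b$ with $\gcd(a,p)=1$, and induces graded isomorphisms of $\mathbb F[U]$--modules
\[
HF^+(Y,i)\;\cong\;HF^+(Y_0,\phi(i)),\qquad i\in\mathbb Z/p\mathbb Z .
\]
The plan is to show that $\phi=\mathrm{id}$ or $\phi(i)\equiv q-1-i\pmod p$; in the latter case (\ref{eq:HFconj}), together with $d(Y_0,\mathfrak s)=d(Y_0,J\mathfrak s)$ from (\ref{eq:CorrSymm}), identifies $HF^+(Y_0,\phi(i))$ with $HF^+(Y_0,i)$, so the lemma follows either way.

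First, only the reduced parts need to be compared: by (\ref{eq:NegCorr}) we have $d(Y,i)=d(L(p,q),i)$, and the same equality holds for $Y_0$ because $V_k(K_0)=0$ for all $k\ge0$ (Proposition~\ref{prop:Corr}); thus the correction terms of $Y$ and of $Y_0$ agree identically as functions of $i$, and the claim reduces to matching $HF_{\mathrm{red}}$ spin$^c$ by spin$^c$. As recorded in \S\ref{subsect:<-6}, Proposition~\ref{prop:CompRed} gives $HF_{\mathrm{red}}(Y,i)\cong\mathbb A_{\mathrm{red},i}$, while $A_{\mathrm{red},k}(K)\cong\mathbb F$ exactly for $k\in\{0,n,-n\}$ (with $g(K)=n+1$), of absolute gradings $d_0$ for $k=0$ and $d_n$ for $k=\pm n$. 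Hence $HF_{\mathrm{red}}(Y,\cdot)$ is supported on three length--$q$ arcs $I_0=[0,q)$ and $I_\pm$, the images in $\mathbb Z/p\mathbb Z$ of the integer intervals $[\pm nq,\pm nq+q)$, and these arcs must be pairwise disjoint: otherwise some $HF_{\mathrm{red}}(Y,i)$ would have rank $\ge2$, whereas by (\ref{eq:K0Surg}) every $HF_{\mathrm{red}}(Y_0,j)$ has rank $\le1$ and $f$ matches these ranks through $\phi$. A direct computation of the mapping--cone gradings from (\ref{eq:GrB}) (using $s_i=0$ for all $i$, which follows from $V_k(K)=0$ for $k\ge0$) shows that $\HFtildered(Y,i)$ --- that is, $HF_{\mathrm{red}}(Y,i)$ with grading shifted down by $d(Y,i)=d(L(p,q),i)$ --- is \emph{constant} on each of the three arcs, equal to $d_0$ on $I_0$ and to $d_n$ on $I_+\cup I_-$ (that the values on $I_+$ and $I_-$ coincide also follows from the conjugation invariance above, since $i\mapsto q-1-i$ interchanges $I_+$ and $I_-$).

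Now apply (\ref{eq:K0Surg}) to $Y_0$ itself, legitimate since $p>3q$: $\HFtildered(Y_0,\cdot)$ is supported on $N_q=[0,q)$, $N_q^+=[q,2q)$ and $N_q^-=[p-q,p)$, with value $0$ on $N_q$ and value $2$ on $N_q^+\cup N_q^-$. Through $\phi$, the isomorphisms induced by $f$ preserve the set of gradings in which $\HFtildered$ is non-zero, so $\{d_0,d_n\}=\{0,2\}$; they also send the set of spin$^c$ structures on which $\HFtildered(Y,\cdot)$ attains its minimal grading to the corresponding set for $Y_0$, namely $N_q$, which has exactly $q$ elements. Were $d_0=2$ and $d_n=0$, this minimal--grading locus for $Y$ would be $I_+\cup I_-$, with $2q$ elements --- impossible since $\phi$ is a bijection. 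Hence $d_0=0$, $d_n=2$, and $\phi(I_0)=N_q$; but $I_0=[0,q)=N_q$, so $\phi(N_q)=N_q$. For $q\ge2$, Lemma~\ref{lem:AffMatch}(1) applies (we have $\tfrac p2>q$) and gives $\phi=\mathrm{id}$ or $\phi(i)\equiv q-1-i\pmod p$, as desired. For $q=1$, $\phi(N_q)=N_q$ only forces $b\equiv0$, so $\phi(i)=ai$; but $\phi$ must also preserve the function $i\mapsto d(L(p,1),i)=\tfrac{(2\overline{i}-p)^2-p}{4p}$ (with $\overline{i}\in\{0,\dots,p-1\}$), which requires $\overline{ai}\in\{\overline{i},\,p-\overline{i}\}$ for every $i$ and hence $a\equiv\pm1\pmod p$; since $q-1-i\equiv-i$ when $q=1$, once more $\phi=\mathrm{id}$ or $\phi(i)\equiv q-1-i$.

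The crux, and the step needing the most bookkeeping, is the second paragraph: reading off from Proposition~\ref{prop:CompRed} and the grading recursion (\ref{eq:GrB}) the precise support together with the constancy and values of the $\HFtildered$--gradings of $Y$, and exploiting the rank--$\le1$ property of $HF_{\mathrm{red}}(Y_0,\cdot)$ to force the three supporting arcs of $Y$ to be disjoint so that the $\HFtildered$--profiles of $Y$ and $Y_0$ can be matched spin$^c$ by spin$^c$ under the affine map $\phi$. The degenerate case $q=1$, in which Lemma~\ref{lem:AffMatch}(1) carries no content, is disposed of separately using the rigidity of the correction terms of $L(p,1)$.
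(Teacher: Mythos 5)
Your overall strategy is close to the paper's: reduce to pinning down the affine map $\phi$, use the support and gradings of $\HFtildered$ together with Lemma~\ref{lem:AffMatch}, and treat $q=1$ (and in the paper also $q=2$) separately via the rigidity of $d(L(p,q),\cdot)$. But the step you yourself flag as the crux contains a genuine error: it is \emph{not} true in general that $\HFtildered(S^3_{p/q}(K),\cdot)$ is constant on the arcs $I_\pm$. The contribution of $A_{\mathrm{red},n}\cong\mathbb F_{(d_n)}$ to $\mathbb A_{\mathrm{red},\mathfrak i(j)}$ sits at the position $(s,A^+_n)$ with $\mathfrak i(j)+ps=nq+j$, and by (\ref{eq:GrOnB}) its grading relative to $d(L(p,q),\mathfrak i(j))$ is $d_n+\sum_{t=0}^{s-1}2\lfloor\frac{\mathfrak i(j)+pt}{q}\rfloor$. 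When $s=0$ (which is why constancy \emph{does} hold on $I_0$) this is just $d_n$, but when $nq\ge p$ one has $s\ge1$ and the sum genuinely depends on $j$: over $j=0,\dots,q-1$ each term $\lfloor\frac{\mathfrak i(0)+pt+j}{q}\rfloor$ jumps by $1$ at some $j$, so the profile on $I_+$ is nonconstant whenever $s\ge1$ and $q\ge2$. Indeed, the paper's Proposition~\ref{prop:>6} exploits precisely this nonconstancy (compared against the constant value $2$ in (\ref{eq:K0Surg})) to force $n=1$; assuming constancy at the stage of Lemma~\ref{lem:SameSpin} essentially presupposes the conclusion of that later proposition.

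Without constancy on $I_\pm$ your counting of the minimal-grading locus does not close. What survives is that the multiset of gradings of $Y$ must match $\{0\}^{q}\sqcup\{2\}^{2q}$, i.e. $\{d_0\}^q\sqcup S\sqcup S$ where $S$ is the grading multiset on $I_+$ (equal to that on $I_-$ by conjugation). If $d_0=0$ you are fine, but for $q$ even the case $d_0=2$ with $S=\{0\}^{q/2}\sqcup\{2\}^{q/2}$ is not excluded by cardinality, and then $\phi^{-1}(N_q)$ need not be $I_0$. The paper sidesteps this by arguing only with the arc $N_q$, where the grading really is constantly $d_0$: this forces either $\phi(N_q)=N_q$ or $\phi(N_q)\subset N_q^+\cup N_q^-$, and the second option is killed by Lemma~\ref{lem:AffMatch}(2) (which needs $q\ge3$, whence the separate $q=2$ case) combined with the fact that $\phi$ commutes with $J$ while $J(N_q^{\pm})=N_q^{\mp}$. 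You would need to add an argument of this kind to repair the even-$q$ case; your $q=1$ discussion, and the $q$ odd case of your counting once corrected, are fine.
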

\begin{proof}
Since $S^3_{p/q}(K)\cong S^3_{p/q}(K_0)$, there is an affine isomorphism $\phi\co \mathbb Z/p\mathbb Z\to\mathbb Z/p\mathbb Z$ such that
$$HF^+(S^3_{p/q}(K),i)\cong HF^+(S^3_{p/q}(K_0),\phi(i))$$
as $\mathbb Q$--graded $\mathbb F[U]$--modules
for any $i\in\mathbb Z/p\mathbb Z$.

If $q=1$, by (\ref{eq:NegCorr}) we have
$$d(S^3_{p/q}(K),i)=d(S^3_{p/q}(K_0),i)=d(L(p,1),i)=-\frac14+\frac{(2i-p)^2}{4p}.$$
It is easy to check that the only affine isomorphisms $\phi\co \mathbb Z/p\mathbb Z\to\mathbb Z/p\mathbb Z$ satisfying that $d(S^3_{p/q}(K),i)=d(S^3_{p/q}(K_0),\phi(i))$
are $\phi(i)=i$ and $\phi(i)=J(i)$. Our conclusion holds by (\ref{eq:HFconj}).

If $q=2$,
$$d(L(p,2),i)=-\frac14+\frac{(2i-p-1)^2}{8p}+\left\{\begin{array}{ll}-\frac14,&i\text{ even,}\\ \frac14,&i\text{ odd.}\end{array}\right.$$
We can check that $d(L(p,2),i)$ attains its maximal value if and only if $i=0,1$.
If there is an affine isomorphism $\phi\co \mathbb Z/p\mathbb Z\to \mathbb Z/p\mathbb Z$ such that $d(L(p,2),i)=d(L(p,2),\phi(i))$, then $\phi$ must be either $\mathrm{id}$ or $J$.
We get our conclusion as in the last paragraph.

Now consider the case when $q\ge 3$. The group $A_{\mathrm{red},0}\cong\mathbb F_{(d_0)}$ contributes to each $\HFtildered(S^3_{p/q}(K),i)$ when $i\in N_q$. Comparing (\ref{eq:K0Surg}),
we must have
$$\HFtildered(S^3_{p/q}(K),i)\cong \mathbb F_{(d_0)},$$
and either $\phi(N_q)=N_q$ or $\phi(N_q)\subset N^+_q\cup N^-_q$.
If $\phi(N_q)=N_q$, then Lemma~\ref{lem:AffMatch} implies that $\phi=\mathrm{id}$ or $J$, hence our conclusion holds.
If $\phi(N_q)\subset N^+_q\cup N^-_q$, then Lemma~\ref{lem:AffMatch} implies that $\phi(N_q)=N_q^+$ or $\phi(N_q)=N_q^-$. However, as an isomorphism of Spin$^c$ structures induced by a homeomorphism, $\phi$ must commute with $J$. Since $J(N_q^{\pm})=N_q^{\mp}$ and $J(N_q)=N_q$, we get a contradiction.
\end{proof}

\begin{prop}\label{prop:>6}
Suppose that $\frac p6>q\ge2$, and $S^3_{p/q}(K)\cong S^3_{p/q}(K_0)$. Then $g(K)=2$ and $\Delta_K(T)=(T^2+T^{-2})-(T+T^{-1})+1$.
\end{prop}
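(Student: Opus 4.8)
The plan is to combine the $\mathrm{Spin}^c$--refined isomorphism of Lemma~\ref{lem:SameSpin} with the explicit computation of $HF_{\mathrm{red}}(S^3_{p/q}(K),i)$ from Proposition~\ref{prop:CompRed}. Recall from the setup of this subsection that $V_k(K)=0$ for all $k\ge 0$, that $A_{\mathrm{red},0}(K)\cong\mathbb{F}_{(d_0)}$ and $A_{\mathrm{red},\pm n}(K)\cong\mathbb{F}_{(d_n)}$ while $A_{\mathrm{red},k}(K)=0$ for all other $k$, and hence $HF_{\mathrm{red}}(S^3_{p/q}(K),i)\cong\mathbb{A}_{\mathrm{red},i}$ for every $i\in\mathbb{Z}/p\mathbb{Z}$; if instead $K$ has genus $1$ then $A_{\mathrm{red},0}(K)\cong\mathbb{F}^3$ and all other $A_{\mathrm{red},k}(K)$ vanish. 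Since $\frac p6>q$ gives $p>3q$, formula (\ref{eq:K0Surg}) applies, so by Lemma~\ref{lem:SameSpin} the group $\HFtildered(S^3_{p/q}(K),i)$ equals $\mathbb{F}_{(0)}$ when $i\in N_q$, equals $\mathbb{F}_{(2)}$ when $i\in N^+_q\cup N^-_q$, and vanishes otherwise.

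First I would rule out the genus--$1$ possibility. There $\mathbb{A}_{\mathrm{red},i}=\bigoplus_s(s,A_{\mathrm{red},\lfloor(i+ps)/q\rfloor}(K))$ is nonzero precisely when $\lfloor(i+ps)/q\rfloor=0$ for some $s$, i.e.\ exactly for the $q$ classes $i\in N_q$, and there it has rank $3$; but by Lemma~\ref{lem:SameSpin} and (\ref{eq:K0Surg}) the manifold $S^3_{p/q}(K)$ has nonzero $HF_{\mathrm{red}}$ in $3q$ distinct $\mathrm{Spin}^c$ structures, each of rank $1$ --- a contradiction, since the number of $\mathrm{Spin}^c$ structures carrying nonzero reduced Floer homology is an invariant of the oriented homeomorphism type. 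So $K$ is a genus--$(n+1)$ fibred knot with $\Delta_K$ given by (\ref{eq:DeltaKn}), and it remains to force $n=1$.

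For the core step, Proposition~\ref{prop:CompRed} shows that $A_{\mathrm{red},n}(K)\cong\mathbb{F}$ contributes a copy of $\mathbb{F}$ to $\mathbb{A}_{\mathrm{red},i}$ exactly for those $i$ with $\lfloor(i+ps)/q\rfloor=n$ for some $s\in\mathbb{Z}$; this set $B_n$ is the image in $\mathbb{Z}/p\mathbb{Z}$ of the $q$ consecutive integers $nq,nq+1,\dots,nq+q-1$. Likewise $A_{\mathrm{red},-n}(K)$ contributes over $B_{-n}=\{-nq,\dots,-nq+q-1\}\bmod p$, and $A_{\mathrm{red},0}(K)$ over $N_q$ (via $s=0$ only). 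Since every $\mathbb{A}_{\mathrm{red},i}$ has rank at most $1$, the three blocks $N_q,B_n,B_{-n}$ are pairwise disjoint; each has $q$ elements and their union is the $3q$--element set $N_q\cup N^+_q\cup N^-_q$, so $B_n\sqcup B_{-n}=N^+_q\cup N^-_q$. Because $p>3q$, the three blocks $N_q,N^+_q=\{q,\dots,2q-1\},N^-_q=\{p-q,\dots,p-1\}$ are mutually separated by genuine gaps in $\mathbb{Z}/p\mathbb{Z}$, so a set of $q$ consecutive residues contained in $N^+_q\cup N^-_q$ and disjoint from $N_q$ must be all of $N^+_q$ or all of $N^-_q$. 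Hence $\{B_n,B_{-n}\}=\{N^+_q,N^-_q\}$, which forces $nq\equiv\pm q\pmod p$, i.e.\ $n\equiv\pm1\pmod p$.

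Finally I would exclude $n\ge p-1$, the only alternative to $n=1$. The point is that when $n>1$ the copy of $\mathbb{F}$ that $A_{\mathrm{red},n}(K)$ contributes to its block $B_n\in\{N^+_q,N^-_q\}$ is carried by the summand $(s,A^+_n)$ of the mapping cone with $s\ne 0$ (one sees, by solving $\lfloor(i+ps)/q\rfloor=n$ for $i$ in that block, that $s$ is of order $q$); then by Convention~\ref{conv:Gr} and the grading rule (\ref{eq:GrOnB}), $\mathrm{gr}(s,\mathbf{1})$ exceeds $\mathrm{gr}(0,\mathbf{1})$ by $\sum_{t=0}^{s-1}2\lfloor(i+pt)/q\rfloor$, and using $p>6q$ this quantity exceeds $2$, so the contributed class lies in $\HFtildered$--grading $>2$, contradicting the value $2$ forced in the first paragraph (when $n=1$ the relevant summand is $(0,A^+_1)$ and no contradiction arises). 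Therefore $n=1$, and Proposition~\ref{prop:TwoCase} gives $g(K)=2$ and $\Delta_K(T)=(T^2+T^{-2})-(T+T^{-1})+1$. I expect the main obstacle to be exactly this last point: when the genus $n+1$ is large compared with $p/q$, the blocks $B_{\pm n}$ wind around $\mathbb{Z}/p\mathbb{Z}$ and can still match $N^{\pm}_q$ as \emph{sets}, so one genuinely needs the grading--shift estimate (rather than mere $\mathrm{Spin}^c$--bookkeeping) to eliminate them.
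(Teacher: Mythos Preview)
Your argument tracks the paper's through the identification $\{B_n,B_{-n}\}=\{N^+_q,N^-_q\}$ and hence $n\equiv\pm1\pmod p$. (The genus--$1$ paragraph is superfluous: the proposition lives inside the subsection whose standing hypothesis is $t_0(K)=t_n(K)=1$, so we are already in case~1) of Proposition~\ref{prop:TwoCase}. But no harm done.)

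The genuine gap is in your final grading step. Unwinding Convention~\ref{conv:Gr} and (\ref{eq:GrOnB}), the $\HFtildered$--grading of the class that $A_{\mathrm{red},n}\cong\mathbb{F}_{(d_n)}$ contributes from the summand $(mq,A^+_n)$ is
\[
d_n\;+\;\sum_{t=0}^{mq-1}2\Big\lfloor\frac{i+pt}{q}\Big\rfloor,
\]
not the sum alone. The internal grading $d_n$ is an unknown constant depending on $K$, and nothing you have established prevents it from being very negative. So the observation that the sum exceeds $2$ (which is true) does \emph{not} imply that the $\HFtildered$--grading exceeds $2$; the inference ``so the contributed class lies in $\HFtildered$--grading $>2$'' is unjustified.

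The paper circumvents this by using that the grading must equal $2$ \emph{for every} $j\in\{0,\dots,q-1\}$ simultaneously (writing $i=q+j$ in the $N^+_q$ case). Since $d_n$ is independent of $j$, the sums $\sum_{t=0}^{mq-1}2\lfloor(q+j+pt)/q\rfloor$ would all have to coincide; but for $m\ge1$ and $q\ge2$ they do not --- pick $\bar t\in\{0,\dots,q-1\}$ with $q\mid(q+1+p\bar t)$ to see the $j=1$ sum strictly exceeds the $j=0$ sum. This forces $m=0$, hence $n=1$, without ever knowing $d_n$. Your instinct that the last step requires a grading argument is exactly right; the point is that it must be a \emph{comparison} of gradings across the $q$ Spin$^c$ structures in the block, not an absolute estimate.
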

\begin{proof}
The group $A_{\mathrm{red},k}$ is nontrivial only for $k=0,\pm n$, and the group $A_{\mathrm{red},0}$ contributes to $\mathbb A_{\mathrm{red},i}$ for each $i\in N_q$.
 The group $A_{\mathrm{red},n}$ contributes to $\mathbb A_{\mathrm{red},i}$ if and only if there exists an $s\in\mathbb Z$ such that $\lfloor\frac{i+ps}q\rfloor=n$ for some $s\in\mathbb Z$. For any $j\in\{0,1,\dots,q-1\}$, let $\mathfrak i(j)\in\{0,1,\dots,p-1\}$ satisfy that $\mathfrak i(j)+ps=nq+j$ for some $s\in\mathbb Z$. Then $\mathfrak i(0),\mathfrak i(1),\dots,\mathfrak i(q-1)$ are consecutive numbers in $\mathbb Z/p\mathbb Z$. By Lemma~\ref{lem:SameSpin} and (\ref{eq:K0Surg}), they should be either the numbers in $N^+_q$ or the numbers in $N^-_q$.

If these numbers are in $N^+_q$, then $\mathfrak i(j)=q+j$, and there exists a nonnegative integer $m$ such that $n=mp+1$. Then $\mathbb A_{\mathrm{red},\mathfrak i(j)}$ is supported in $(mq,A_{\mathrm{red},mp+1})$, and its absolute grading (see (\ref{eq:GrOnB})) is given by
$$d_n+1+\mathrm{gr}(mq,\mathbf 1)=d_n+d(L(p,q),q+j)+\sum_{s=0}^{mq-1}2\lfloor\frac{q+j+ps}q\rfloor,$$
where $(mq,\mathbf 1)$ is the lowest element in $(mq,B^+)\subset\mathbb B^+_{p/q,i}$.
Comparing with (\ref{eq:K0Surg}), we have
$$2=d_n+\sum_{s=0}^{mq-1}2\lfloor\frac{q+j+ps}q\rfloor,\text{ for any }j\in\{0,1,\dots,q-1\}.$$
This is impossible if $m\ge1,q\ge2$. In fact, there exists $\bar s\in\{0,1,\dots,q-1\}$ such that $q|(q+1+p\bar s)$, which implies that
$$\sum_{s=0}^{mq-1}2\lfloor\frac{q+ps}q\rfloor<\sum_{s=0}^{mq-1}2\lfloor\frac{q+1+ps}q\rfloor, \qquad \text{when }m\ge1,q\ge2.$$
So if $q\ge2$ we must have $m=0$, which implies $g(K)=n+1=mp+2=2$.

If these numbers are in $N^-_q$, then $\mathfrak i(j)=p-q+j$, and there exists a nonnegative integer $m$ such that $n=mp-1$. We can get a contradiction as before.
\end{proof}

\subsection{Finishing the proof of Theorem~\ref{thm:Fibered}}

If $\frac pq>0$, let $K_0=T_{5,2}$. If $\frac pq<0$, let $K_0=T_{5,-2}$, then the $-\frac pq$ surgery on the mirror of $K$ is homeomorphic to $S^3_{-p/q}(K_0)$ via an orientation preserving homeomorphism.
So we may always assume $\frac pq>0$ and $S^3_{p/q}(K)\cong S^3_{p/q}(K_0)$ for $K_0=T_{5,2}$ or $T_{5,-2}$.

If $t_0(K)=3$, the proof of Proposition~\ref{prop:TwoCase} shows that $g(K)=1$ and $\Delta_K(T)=3T-5+3T^{-1}$. If $t_0(K)=1$, then $g(K)=n+1$ and $\Delta_K(T)$ is given by (\ref{eq:DeltaKn}).

If $t_0(K)=1$, $K_0=T_{5,2}$ and $\frac pq>1$, then the result in Subsection~\ref{subsect:>1} shows that $K$ is a genus $2$ fibred knot with $\Delta_K(T)=(T^2+T^{-2})-(T+T^{-1})+1$.

If $t_0(K)=1$, $K_0=T_{5,-2}$ and $\frac pq>6$, then Proposition~\ref{prop:>6} implies $K$ is a genus $2$ fibred knot with $\Delta_K(T)=(T^2+T^{-2})-(T+T^{-1})+1$ unless $|q|=1$.

This finishes the proof of Theorem~\ref{thm:Fibered}.

\begin{rem}\label{addendum}
We have the following addendum to Theorem~\ref{thm:Fibered}:
\newline
(a) If $p$ is even, then case 2) of Theorem \ref{thm:Fibered} cannot happen and in case 1) of Theorem \ref{thm:Fibered}, the number $n$ must be odd.
\newline
(b) If $p$ is divisible by $3$, then case 2) cannot happen and in case 1), the number $n$ is not divisible by $3$.
\end{rem}

\begin{proof}
For a knot $K$ in $S^3$, let $M_K$ be its exterior and $\{\m,\l\}$ be the standard meridian-longitude basis on $\p M_K$.
For an integer $j>1$, let  $f_j:\widetilde M^j_K\ra M_K$ be the unique $j$-fold
free cyclic covering. On $\p \widetilde M_K^j$ we choose the basis $\{\tilde \m_j,\tilde \l_j\}$ such that
$f_j(\tilde \m_j)=\m^j$ and $f(\tilde \l_j)=\l$.
We also use $M_K(p/q)$ to denote $S^3_{p/q}(K)$, and use $\widetilde M^j_K(p/q)$ to denote the Dehn filling of $\widetilde M_K^j$
with slope $p/q$ with respect to the basis $\{\tilde \m_j, \tilde \l_j\}$.
Note that $\widetilde M_K^j(1/0)$ is the unique $j$-fold cyclic branched cover of $S^3$ branched over $K$.
By \cite[Theorem~8.21]{BurdeZieschang}, if no root of $\D_K(T)$ is a $j$-th root of unity, then  the order of the first homology of  $\widetilde M_K^j(1/0)$ is
 $$|H_1(\widetilde M_K^j(1/0))|=|\Pi_{i=1}^j\D(\xi_i)|,$$ where $\xi_1,...,\xi_j$ are the $j$ roots of
 $x^j=1$, in which case it follows from \cite[Proposition~8.19]{BurdeZieschang}
 that $$|H_1(\widetilde M_K^j(p/q))|=|\z/p\z\oplus H_1(\widetilde M_K^j(1/0))|
 =|p|\cdot|\Pi_{i=1}^j\D(\xi_i)|.$$

(a) Suppose $p$ is even. Let $\tilde p=p/2$. Then $\widetilde M_K^2(\tilde p/q)$ is the unique free double cover of $M_K(p/q)$.
If $\D_K(T)=3(T+T^{-1})-5$ or if $\D_K(T)=(T^{(n+1)}+T^{-(n+1)})-2(T^n+T^{-n})+(T^{(n-1)}+T^{-(n-1)})+(T+T^{-1})-1$ for $n$ even,
then  $|H_1(\widetilde M_K^2(\tilde p/q))|=|\tilde p||\D_K(-1)|=11|\tilde p|$. But
 $|H_1(\widetilde M_{T_{5,2}}^2(\tilde p/q))|=5|\tilde p|$.
Hence case 2) cannot happen and $n$ must be odd.

(b)
Suppose $p$ is divisible by $3$. The argument  is similar.
Let $\tilde  p=p/3$. Then $\widetilde M_K^3(\tilde p/q)$ is the unique free $3$-fold cyclic cover of $M_K(p/q)$. If  $\D_K(T)=3(T+T^{-1})-5$ or
$\D_K(T)=(T^{(n+1)}+T^{-(n+1)})-2(T^n+T^{-n})+(T^{(n-1)}+T^{-(n-1)})+(T+T^{-1})-1$ for $n$ divisible
by $3$, then
 $|H_1(\widetilde M_K^3(\tilde p/q))|=64|\tilde p|$. But  $|H_1(\widetilde M_{T_{5,2}}^3(\tilde p/q))|
=|\tilde p|$. Hence case 2) cannot happen and $n$ is not divisible by $3$.
\end{proof}

\section{Finishing the proof of Theorem~\ref{thm:T52}}\label{sect:T52}

\begin{prop}\label{prop:q9}
Suppose that $S^3_{p/q}(K)\cong S^3_{p/q}(T_{5,2})$  and $|q|\ge9$, then $K=T_{5,2}$.
\end{prop}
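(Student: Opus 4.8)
The plan is to combine the structural information from Theorem~\ref{thm:Fibered} with the satellite/hyperbolic trichotomy used in Section~\ref{sect:TorusKnot}, exploiting that $|q|\ge9$ is large enough to rule out most possibilities. First I would observe that by Theorem~\ref{thm:Fibered} the knot $K$ is either a genus~$1$ knot with $\Delta_K(T)=3T-5+3T^{-1}$, or a genus~$(n+1)$ fibred knot with Alexander polynomial given by (\ref{eq:DeltaKn}); in either case $2g(K)-1\le\frac{(r^2-1)(s^2-1)}{24}$ is \emph{not} directly available since we no longer assume $\frac pq\ge rs-r-s$, so instead I would work with the genus bound coming from $q$: when $|q|\ge9$, the inequality $\Delta(\frac pq,\frac mn)$ being small forces strong constraints, and more importantly Lemma~\ref{lem:pBound} together with the Lackenby--Meyerhoff input already used in Proposition~\ref{prop:Sat} can be invoked. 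Concretely I would first handle the case that $\frac pq$ is such that $S^3_{p/q}(T_{5,2})$ is a Seifert fibred space over $S^2$ with three exceptional fibres (the generic case), a lens space, or reducible, using Moser's classification~\cite{Moser}.

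The main steps, in order. (i) Apply Theorem~\ref{thm:Fibered}: $K$ has genus $1$ or is fibred of genus $n+1$ with a known Alexander polynomial; in particular $g(K)\le n+1$ and $\Delta_K(\pm1)$ is explicitly $\pm11$ or $\pm1$ depending on parity, via Remark~\ref{addendum}. (ii) Show $K$ is not hyperbolic: if it were, $S^3_{p/q}(K)$ would be hyperbolic by Lackenby--Meyerhoff~\cite{LM} since $|q|\ge9\ge\text{(the needed bound)}$ once we also know $S^3_{p/q}(K)$ is not hyperbolic because it equals $S^3_{p/q}(T_{5,2})$, which is a graph manifold (Seifert fibred or reducible). (iii) Show $K$ is not a satellite: run the argument of Proposition~\ref{prop:Sat}; the companion $K'$ is hyperbolic or a torus knot, $S^3_{p/(qw^2)}(K')\cong S^3_{p/q}(T_{5,2})$ with $w\ge2$ so $|qw^2|\ge36$, which forces $K'$ hyperbolic to give a hyperbolic filling (contradiction), and if $K'$ is a torus knot one matches Seifert invariants as in Proposition~\ref{prop:Sat} to get a numerical contradiction with $|q|\ge9$. (iv) Hence $K$ is a torus knot, and Proposition~\ref{prop:Tmn} applies provided $\frac pq\notin\{10\pm1,10\pm\frac12\}$, i.e. $\frac pq\notin\{9,11,\frac{19}2,\frac{21}2\}$, which is automatic since $|q|\ge9>2$; so $K=T_{5,2}$.

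The delicate point is step (ii)/(iii): one must check that $|q|\ge9$ (hence $|qw^2|\ge 9$ or $\ge 36$ in the satellite case) really is in the range where \cite{LM} guarantees that $q$-surgery on a hyperbolic knot is hyperbolic — the Lackenby--Meyerhoff bound is on $\Delta$(slope, meridian)$=|q|$ and the threshold there is small, so this should go through, but I would state it carefully. A secondary subtlety is that Theorem~\ref{thm:Fibered}'s genus conclusion bounds $g(K)$ only in terms of $n$, which a priori is unbounded; however, for the non-hyperbolic cases the genus is pinned down because $K$ must be a torus knot or the core of a solid torus, and those have genus controlled by the Seifert data of $S^3_{p/q}(T_{5,2})$, so the argument closes without needing an a priori genus bound. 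I expect step (iii), matching the Seifert invariants when the companion is itself a torus knot and extracting a contradiction from $|q|\ge9$, to be the main computational obstacle.
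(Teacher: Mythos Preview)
Your overall architecture is correct and matches the paper: invoke \cite{LM} to rule out $K$ hyperbolic since $|q|\ge9$, then run the torus/satellite dichotomy, and in the satellite case again use \cite{LM} on the companion (with $|qw^2|\ge36$) to force the companion to be a torus knot. Step (iv) via Proposition~\ref{prop:Tmn} is fine (the paper instead reads off directly from the Alexander polynomial of Theorem~\ref{thm:Fibered} that a torus knot $K$ must be $T_{5,\pm2}$, but your route also works since the excluded slopes have $|q|\le2$).

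The genuine gap is in step (iii), the case where the companion $K'$ is a torus knot. You propose to ``match Seifert invariants as in Proposition~\ref{prop:Sat}'', but that argument hinged on the inequality $\frac pq>rs+\frac37\max\{r,s\}$, which is entirely absent here (indeed $\frac pq$ can be arbitrarily close to $0$ when $|q|\ge9$). So the numerical contradiction of Proposition~\ref{prop:Sat} does not transfer, and a fresh argument is needed; you flag this as ``the main computational obstacle'' but do not supply one. The paper resolves this case by a different route: since $|q|>1$, Gabai's $1$--bridge braid theorem \cite{G1bridge} upgrades ``$K$ is a braid in $V$'' to ``$K$ is a $(\pm a,b)$--cable of $K'$''. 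Then both pattern and companion are torus knots, so $\Delta_K$ is monic and one can compare $\Delta_K(T)=\Delta_{K'}(T^b)\Delta_{T_{a,b}}(T)$ term--by--term against the explicit form (\ref{eq:DeltaKn}) from Theorem~\ref{thm:Fibered}; the leading two coefficients already give a contradiction. This Alexander--polynomial argument replaces the Seifert--invariant bookkeeping entirely and is what makes the satellite case close without any hypothesis on $\frac pq$.
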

\begin{proof}
Since $|q|\ge9$, the result of \cite{LM} implies that $K$ is not hyperbolic.

If $K$ is a torus knot, then the computation of $\Delta_K(T)$ in Theorem~\ref{thm:Fibered} implies that $K=T_{5,2}$ or $T_{5,-2}$, and now it is easy to see that $K=T_{5,2}$.

If $K$ is a satellite knot, as in the proof of Proposition~\ref{prop:Sat}, we may assume the companion knot $K'$ is not a satellite knot. Since $S^3_{p/q}(K)\cong S^3_{p/q}(T_{5,2})$ does not contain any incompressible tori and $|q|\ge9>1$, it follows from \cite{GSurg,G1bridge}
that $K$ is a $(\pm a, b)$--cable of $K'$ for some integers $a,b$ with $a>0,b>1$, and $S^3_{p/q}(K)\cong S^3_{p/(qb^2)}(K')$. Again, by \cite{LM} $K'$ is not hyperbolic, so $K'$ is a torus knot.

Let us recall the formula for the Alexander polynomial of a satellite knot \cite[Proposition 8.23]{BurdeZieschang}. If $S$ is a satellite knot with pattern knot $P$ and companion knot $C$ and the winding number $w$ of $S$ in a regular neighborhood of $C$, then the Alexander polynomials of $S$, $C$ and $P$ satisfy
the following relation
\begin{equation}\label{polynomial relation}
\Delta_S(T)=\Delta_C(T^w)\cdot\Delta_P(T).
\end{equation}
In our present case, we see immediately that $\Delta_K(T)$ is monic since both the pattern knot and the companion knot of $K$ have monic Alexander polynomials. So $\Delta_K(T)$ is given by
(\ref{eq:DeltaKn}). Now if $a=1$, then the pattern knot is trivial, so $\Delta_K(T)=\Delta_{K'}(T^b)$. The right hand side of (\ref{eq:DeltaKn}) is not of this form, so this case does not happen.

Now we have $a>1$, and $K'=T_{\pm c, d}$ for $c,d>1$. The pattern knot is $T_{\pm a, b}$, whose Alexander polynomial has the form
$$T^{g_{a,b}}-T^{g_{a,b}-1}+\text{lower order terms},\quad \text{where } g_{a,b}=\frac{(a-1)(b-1)}2.$$
Similarly, $$\Delta_{K'}(T^b)=T^{bg_{c,d}}-T^{bg_{c,d}-b}+\text{lower order terms}.$$
Hence by (\ref{polynomial relation}) $$\Delta_K(T)=T^{bg_{c,d}+g_{a,b}}-T^{bg_{c,d}+g_{a,b}-1}+\text{lower order terms},$$ which could be equal to the right hand side of (\ref{eq:DeltaKn}) only when $n=1$. However, in this case the degrees of the highest terms of the two polynomials do not match, so this case does not happen.
\end{proof}

\begin{lem}\label{lem:SorT}
Suppose that $K$ is a satellite knot or a torus knot, $g(K)\le2$,  $K$ is fibred when $g(K)=2$,
 and $S^3_{p/q}(K)$ is homeomorphic to $S^3_{p/q}(T_{5,2})$ (not necessarily orientation preserving) for a nontrivial slope $p/q\ne0$. Then $K=T_{5,2}$.
\end{lem}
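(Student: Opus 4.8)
The plan is to split, as the hypothesis invites, into the case $K$ a torus knot and the case $K$ a satellite knot, exploiting in both that, by Moser's classification \cite{Moser}, $S^3_{p/q}(T_{5,2})$ is always Seifert fibred --- over $S^2$ with at most three exceptional fibres, a lens space, or a connected sum of two lens spaces. In particular $S^3_{p/q}(T_{5,2})$ is atoroidal for every $p/q$, and it is irreducible unless $p/q=10$.

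\emph{Satellite case.} If $K$ is a satellite, let $R\subset S^3\setminus K$ be the innermost essential torus; it bounds a solid torus $V\supset K$ whose core $K'$ is a nontrivial, non-satellite knot. Since $R$ cannot remain incompressible after $\frac pq$-surgery (that would contradict atoroidality of $S^3_{p/q}(T_{5,2})$; the one reducible slope $p/q=10$ is handled by the cabling conjecture, which is known for satellite knots), $V_{p/q}(K)$ is a solid torus, so by \cite{GSurg} and \cite{G1bridge} the pattern is a $0$- or $1$-bridge braid; a $1$-bridge braid would have winding number $\ge3$ and hence $g(K)\ge3$, so in fact $K$ is a $(\pm a,b)$-cable of $K'$ with winding number $b\ge2$ and $S^3_{p/q}(K)=S^3_{p/(qb^2)}(K')$. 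From $g(K)=b\,g(K')+g(T_{a,b})\le2$ with $g(K')\ge1$ we get $b=2$, $g(K')=1$, and $g(T_{a,2})=0$, so $|a|=1$ and $K=C_{1,2}(K')$. As $g(K)=2$, the hypothesis gives that $K$ is fibred, hence $K'$ is fibred of genus $1$, i.e. $K'\in\{T_{2,3},\overline{T_{2,3}},4_1\}$. Now $S^3_{p/q}(K)=S^3_{p/(4q)}(K')$ with $\gcd(p,4q)=1$, so $\frac p{4q}$ is never an integer: if $K'=4_1$ then $S^3_{p/(4q)}(4_1)$ is hyperbolic (the figure-$8$ has only integral exceptional slopes), which is absurd since $S^3_{p/q}(T_{5,2})$ is Seifert fibred; if $K'=T_{\pm2,3}$ then $S^3_{p/(4q)}(T_{\pm2,3})$ is, by \cite{Moser}, Seifert fibred over $S^2$ with exceptional-fibre orders $\{2,3,|p\mp24q|\}$, and an elementary number-theoretic check shows this multiset never equals $\{5,2,|p-10q|\}$, while in the slopes where $S^3_{p/q}(T_{5,2})$ is a lens space or reducible one has $|p\mp24q|\ge3$, so $S^3_{p/(4q)}(T_{\pm2,3})$ has noncyclic fundamental group and is neither a lens space nor reducible. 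In every case we reach a contradiction, so $K$ is not a satellite.

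\emph{Torus-knot case.} If $K=T_{m,n}$, then $g(K)=\tfrac12(|m|-1)(|n|-1)\le2$ with $|m|,|n|\ge2$ coprime forces $\{|m|,|n|\}\in\{\{2,3\},\{2,5\}\}$, so $K\in\{T_{5,2},\overline{T_{5,2}},T_{2,3},\overline{T_{2,3}}\}$. By \cite{Moser} each $S^3_{p/q}$ of these is Seifert fibred over $S^2$, and, when the third exceptional fibre has order $\ge2$, its (orientation-independent) multiset of fibre orders is $\{5,2,|p-10q|\}$, $\{5,2,|p+10q|\}$, $\{2,3,|p-6q|\}$, $\{2,3,|p+6q|\}$, respectively. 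Matching against $\{5,2,|p-10q|\}$ rules out $\overline{T_{2,3}}$ (one would need $|p-10q|=3$, $|p+6q|=5$, with no integer solution) and $\overline{T_{5,2}}$ (one would need $|p-10q|=|p+10q|$, i.e. $pq=0$), and forces $\frac pq=\pm\tfrac{17}2$ in the case $K=T_{2,3}$; the finitely many slopes with $|p-10q|\le1$ (where $S^3_{p/q}(T_{5,2})$ is a lens space or reducible) are dismissed as before because the competing third fibre order is $\ge3$. The remaining case is $K=T_{2,3}$, $\frac pq=\pm\tfrac{17}2$: both $S^3_{17/2}(T_{2,3})$ and $S^3_{17/2}(T_{5,2})$ are Seifert fibred over $S^2(2,3,5)$ with $H_1\cong\mathbb Z/17\mathbb Z$, and there are precisely two such spherical space forms, mirror images of one another. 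The Casson--Walker surgery formula gives
$$\lambda(S^3_{17/2}(T_{2,3}))-\lambda(S^3_{17/2}(T_{5,2}))=\frac1{17}\left(\Delta''_{T_{2,3}}(1)-\Delta''_{T_{5,2}}(1)\right)=\frac{2-6}{17}\ne0,$$
so the two are not orientation-preservingly homeomorphic; and a direct computation of $\lambda(L(17,2))$ shows $\lambda(S^3_{17/2}(T_{5,2}))\ne0$, so this space form is chiral and the two are not homeomorphic at all. Hence in all cases $K=T_{5,2}$.

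\emph{Main obstacle.} Outside the slopes $\pm\tfrac{17}2$ everything reduces to Moser's classification, the atoroidality/irreducibility of the model manifolds, the Dehn-surgery structure theory for satellite knots, and the additivity of genus under cabling --- all standard, modulo a handful of small finite checks. The genuinely delicate point is $K=T_{2,3}$ at $\frac pq=\pm\tfrac{17}2$, where Moser's data, $|H_1|$, and the base orbifold all coincide, so that one must actually evaluate the Casson--Walker invariant (equivalently, the sum of correction terms, both manifolds being $L$-spaces) and invoke the chirality of the relevant icosahedral space form. A secondary bookkeeping burden is carrying non-orientation-preserving homeomorphisms through every step.
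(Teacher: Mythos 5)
Your overall architecture is sound and, in the satellite case, essentially reproduces the paper's argument (innermost torus, Gabai/Scharlemann forcing a low--winding-number braid pattern, genus additivity forcing a $(\pm1,2)$--cable of a genus-one fibred companion, Gordon's cable-surgery formula, and a Seifert-invariant mismatch); the paper dispatches the torus-knot case with ``it is easy to see,'' whereas you correctly isolate $K=T_{2,3}$ at $p/q=17/2$ as the one slope where Moser's data fail to separate the two surgeries. The gap is in your resolution of exactly that case, and it is fatal: the sentence ``$\lambda(S^3_{17/2}(T_{5,2}))\ne0$, so this space form is chiral and the two are not homeomorphic at all'' is a non sequitur. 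If the two manifolds are $M$ and $-M$ for a chiral $M$, they are still homeomorphic as unoriented manifolds (they share an underlying space); chirality only rules out an orientation-\emph{preserving} identification, which you had already excluded. Worse, the unoriented identification actually occurs. The Euler number of $S^3_{p/q}(T_{r,s})$ is $-p/\bigl(rs(p-qrs)\bigr)$, so $e(S^3_{17/2}(T_{2,3}))=-17/30$ while $e(S^3_{17/2}(T_{5,2}))=+17/30$; since a Seifert fibred space over $S^2(2,3,5)$ is determined up to orientation-preserving homeomorphism by its Euler number alone (the congruence $\sum\beta_i/\alpha_i\equiv-e\pmod 1$ determines each $\beta_i$ modulo $\alpha_i$ by CRT, the $\alpha_i$ being pairwise coprime) and orientation reversal negates $e$, we get $S^3_{17/2}(T_{2,3})\cong -S^3_{17/2}(T_{5,2})$. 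Your own Casson--Walker numbers confirm this: $\lambda(L(17,2))=-\tfrac12 s(2,17)=-\tfrac{4}{17}$, whence $\lambda(S^3_{17/2}(T_{2,3}))=-\tfrac2{17}=-\lambda(S^3_{17/2}(T_{5,2}))$, which is exactly the relation forced by an orientation-reversing homeomorphism; the multisets of correction terms of the two manifolds are likewise exact negatives of one another.

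Consequently the statement as written appears to be false: $K=T_{2,3}$ with $p/q=17/2$ satisfies every hypothesis of Lemma~\ref{lem:SorT} (a torus knot of genus $1\le2$, the two surgeries homeomorphic, not necessarily preserving orientation), yet $K\ne T_{5,2}$. The paper's one-line treatment of the torus-knot case overlooks this as well. The lemma survives, and your proof goes through, if ``homeomorphic (not necessarily orientation preserving)'' is strengthened to the orientation-preserving $\cong$ used everywhere else in the paper --- which is the only form in which the lemma is ever invoked in the proof of Theorem~\ref{thm:T52}, and never at the slope $17/2$, so the main results are unaffected. Under that stronger hypothesis your Casson--Walker comparison (or the correction terms) finishes the $17/2$ case, and no appeal to chirality is needed.
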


\begin{proof}
If $K$ is a torus knot, then from $g(K)\le 2$ we know that $K$ is one of $T_{5,\pm2}$ or $T_{3,\pm2}$.
Now it is easy to see that   $K=T_{5,2}$.

So suppose that  $K$ is a satellite knot, with companion knot $C$,   pattern knot $P$ and winding number  $w$
recalled as above. Since $S^3_{p/q}(K)$ does not contain any incompressible tori, it follows from \cite{GSurg} and \cite{S} that $w>1$ and $P$ is a $w$--braid in the solid torus.

By \cite[Proposition 2.10]{BurdeZieschang}, $g(K)\ge wg(C)+g(P)\ge w>1$.
So $K$ is fibred of genus $2$, $g(C)=1$, $g(P)=0$, $w=2$.

The companion knot $C$ must also be fibred (see, e.g. \cite{HMS}).
Therefore $C$ is either the trefoil knot or the figure-$8$ knot, and $K$ is a $(\pm1, 2)$ cable
on $C$.
It follows from  \cite[Lemma 7.2 and Corollary 7.3]{Go} that either
$p/q=\pm2$ and $S^3_{\pm2}(K)=S^3_{\pm1/2}(C)\# L(2,1)$
or  $p/q=(\pm2q+\varepsilon)/q$
and $S^3_{(\pm2q+\varepsilon)/q}(K)=S^3_{(\pm2q+\varepsilon)/(4q)}(C)$, where $\varepsilon$ is $1$ or $-1$.
It follows that  $C$ cannot be the figure-8 knot since all non-integral surgery on the
figure-8 knot yields hyperbolic manifolds.
So  $C$ is the trefoil knot.
As   $S^3_{\pm1/2}(C)$ can  never  be a lens space of order $5$,
we have $p/q=(\pm2q+\varepsilon)/q$
and $S^3_{(\pm2q+\varepsilon)/q}(K)=S^3_{(\pm2q+\varepsilon)/(4q)}(C)$.
Now $S^3_{(\pm2q+\varepsilon)/q}(T_{5,2})$ is Seifert fibred over $S^2(2,5, |\pm2q+\varepsilon-10q|)$
while  $S^3_{(\pm2q+\varepsilon)/(4q)}(C)$ is Seifert fibred over
$S^2(2,3, |\pm2q+\varepsilon-24q|)$ or $S^2(2,3, |\pm2q+\varepsilon+24q|)$
depending on $C$ is right-hand or left hand trefoil.
Hence $|\pm2q+\varepsilon-10q|=3$ and $|\pm2q+\varepsilon-24q|=5$ (or $|\pm2q+\varepsilon+24q|=5$),
which is not possible.
\end{proof}

\begin{cor}\label{cor:p>=33}
Suppose that $S^3_{p/q}(K)\cong S^3_{p/q}(T_{5,2})$  with $|p|\ge33$ and $g(K)\le2$, then $K=T_{5,2}$.
\end{cor}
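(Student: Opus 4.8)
The plan is to use the standard trichotomy that $K$ is hyperbolic, a torus knot, or a satellite knot: I would eliminate the hyperbolic case by the genus bound, and then reduce the remaining cases to Lemma~\ref{lem:SorT}.

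First I would rule out $K$ being hyperbolic. Since $S^3_{p/q}(T_{5,2})$ is obtained by surgery on a torus knot, it is reducible, a lens space, or a small Seifert fibred space \cite{Moser}; in particular it is never a hyperbolic manifold. Hence, if $K$ were hyperbolic, Lemma~\ref{lem:pBound} would apply and give $|p|\le\frac{36}{3.35}(2g(K)-1)\le\frac{36}{3.35}\cdot 3<33$, contradicting the hypothesis $|p|\ge33$. Therefore $K$ is a torus knot or a satellite knot.

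Next I would verify the remaining hypotheses of Lemma~\ref{lem:SorT}. The slope $p/q$ is nontrivial and nonzero since $|p|\ge33$ forces $q\ne0$ and $p\ne0$. We are given $g(K)\le2$, and I would argue that $K$ is fibred when $g(K)=2$: indeed, Theorem~\ref{thm:Fibered}, applied to the given homeomorphism $S^3_{p/q}(K)\cong S^3_{p/q}(T_{5,2})$, puts $K$ into case~1 or case~2 of that theorem; case~2 has genus $1$, so if $g(K)=2$ we are in case~1 (necessarily with $n=1$), where $K$ is fibred. With these checks in place, Lemma~\ref{lem:SorT} applies and yields $K=T_{5,2}$.

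This corollary is essentially an assembly of the results already established, so I do not expect a serious obstacle. The only two points that require a word of justification are that torus knot surgeries are never hyperbolic (so that Lemma~\ref{lem:pBound} can legitimately be invoked) and that Theorem~\ref{thm:Fibered} upgrades the genus-$2$ alternative to a fibred knot (so that the fibredness hypothesis of Lemma~\ref{lem:SorT} is met).
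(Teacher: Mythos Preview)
Your proof is correct and follows essentially the same route as the paper's own proof: rule out the hyperbolic case via Lemma~\ref{lem:pBound} using $g(K)\le2$ and $|p|\ge33$, upgrade the genus-$2$ case to fibred via Theorem~\ref{thm:Fibered}, and finish with Lemma~\ref{lem:SorT}. Your write-up simply supplies more of the routine justifications (why torus knot surgeries are non-hyperbolic, why the slope is nontrivial and nonzero) than the paper spells out.
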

\begin{proof}
By Lemma~\ref{lem:pBound}, $K$ is not a hyperbolic knot. Theorem~\ref{thm:Fibered} implies that $K$ is fibred if $g(K)=2$. Our conclusion follows from Lemma~\ref{lem:SorT}.
\end{proof}

\begin{lem}\label{lamination}
Suppose that $K\subset S^3$ is a hyperbolic fibred knot.
 Then $S^3_{p/q}(K)$ is hyperbolic if    $|q|\geq 3$ and $1\leq |p|\leq 2|q|-3$.
 \end{lem}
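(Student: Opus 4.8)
The statement concerns a hyperbolic fibred knot $K$ and asserts that $S^3_{p/q}(K)$ remains hyperbolic whenever $|q| \geq 3$ and $1 \leq |p| \leq 2|q|-3$. The natural approach is to use the work of Gabai and others on taut foliations and essential laminations: a fibred knot $K$ has an essential lamination (indeed a taut foliation) in its exterior coming from the fibration, and such a lamination persists as an essential lamination in many Dehn fillings. The key fact I would invoke is the result (due to Gabai--Oertel, and refined by Gabai, Roberts, etc.) that if a knot exterior $M$ fibres over $S^1$ with fibre a Seifert surface $F$ of genus $g$, then the suspension foliation can be modified near the boundary so that all fillings of sufficiently large distance from the boundary slope of the fibration (the longitude $\lambda$, slope $0$) carry an essential lamination, hence are irreducible with infinite $\pi_1$ and contain no essential sphere or (after Perelman's geometrization plus the absence of essential tori/Seifert pieces) are hyperbolic.

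First I would recall the precise branched-surface statement: Gabai's theorem (see also Brittenham, or the version in Roberts' work on taut foliations from fibrations) says the fibration of $M$ gives rise to an essential branched surface $B$ carrying a lamination, and the set of filling slopes $p/q$ which are \emph{not} carried — the ``degeneracy slopes'' — is governed by the geometry of the branched surface at the cusp. For a fibred knot the relevant bound is that $S^3_{p/q}(K)$ contains an essential lamination (hence is irreducible, $\pi_1$-infinite, with no essential torus besides those already present, and therefore hyperbolic by geometrization since $K$ hyperbolic) provided the distance $\Delta(p/q, 0) = |p|$ is small relative to $q$ — more precisely the standard estimate gives essential laminations persist when $|p| \leq 2|q| - 1$ or similar, and the slightly weaker $|p| \leq 2|q|-3$ here is chosen to comfortably clear the finitely many exceptional slopes. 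I would cite the relevant theorem (this is where a precise reference — Gabai--Oertel ``Essential laminations in $3$--manifolds'', or Roberts, or the survey — must be pinned down) and then check that the hypotheses $|q| \geq 3$, $1 \leq |p| \leq 2|q|-3$ put $p/q$ in the allowed range.

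The remaining steps are bookkeeping: (i) $S^3_{p/q}(K)$ is irreducible and not $S^1 \times S^2$ because it carries an essential lamination; (ii) it is not a small Seifert fibred space or a lens space for the same reason (essential laminations obstruct these when the lamination is genuinely essential, i.e. not a fibration of a Seifert piece — here one uses that the lamination comes from the fibre surface, a Seifert surface, which after the surgery is no longer closed-leaf fibred since $p/q \neq 0$); (iii) $S^3_{p/q}(K)$ is atoroidal because any essential torus would have to be carried by or transverse to the lamination, and the fibred structure forces $K$ to be a small knot in the relevant sense — more carefully, one would argue that a hyperbolic fibred knot whose $p/q$-surgery is toroidal forces $p/q$ to be integral or to satisfy $|p| \leq$ a bound coming from the theory of exceptional surgeries (Gordon--Luecke type bounds, or the Gabai--type constraint that toroidal surgeries on hyperbolic knots have $\Delta(p/q,0)$ bounded); and finally (iv) invoke Thurston's hyperbolic Dehn surgery / Perelman geometrization to conclude hyperbolicity from irreducible + atoroidal + not Seifert fibred + infinite $\pi_1$.

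\textbf{Main obstacle.} The hard part is controlling the non-hyperbolic exceptional fillings precisely enough to see they all lie outside the stated range $\{|q|\geq 3,\ 1\leq |p|\leq 2|q|-3\}$. The lamination machinery cleanly rules out reducible and $\pi_1$-finite fillings for all small-distance slopes, but toroidal fillings are more delicate: one must argue that an essential torus in $S^3_{p/q}(K)$ for $K$ a hyperbolic fibred knot cannot occur in this slope range. The cleanest route is probably to combine (a) the fibredness, which via the fact that the fibre surface caps off to give either a horizontal or vertical surface constraint forces strong restrictions, with (b) the known classification/bounds on toroidal surgeries — e.g. that for a hyperbolic knot, a toroidal $p/q$-surgery with $|q| \geq 2$ has very limited possibilities (Gordon--Luecke, Eudave-Mu\~noz), and in particular for $|q| \geq 3$ the surgery is never toroidal unless it is one of finitely many explicitly understood cases. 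I would need to check that none of those exceptional toroidal cases is both fibred and lies in the range $1 \leq |p| \leq 2|q|-3$; I expect the authors handle this by citing the relevant exceptional-surgery literature rather than by a self-contained argument.
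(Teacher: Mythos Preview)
Your overall instinct---essential laminations coming from the fibration---is the right one, and the paper's proof is indeed a short lamination argument. But two key pieces are misidentified in your plan, and fixing them collapses the whole argument into three lines.

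\textbf{The degenerate slope is not the longitude.} You write that the lamination persists ``provided the distance $\Delta(p/q,0)=|p|$ is small relative to $q$'', implicitly taking the degeneracy slope to be $0$. That is not what happens. By Gabai--Oertel (Theorem~5.3 of \cite{GO}), the fibration on a hyperbolic fibred knot complement gives an essential lamination whose degenerate slope $\gamma_0$ is either the meridian $1/0$ or an \emph{integer} slope; and when it is an integer, Gabai \cite[Theorem~8.8]{G2} (see also Roberts \cite[Corollary~7.2]{Ro}) gives $|\gamma_0|\geq 2$. So the relevant distance is $\Delta(p/q,\gamma_0)$, not $\Delta(p/q,0)$, and the two possible locations of $\gamma_0$ are exactly what drives the shape of the hypothesis $|q|\geq 3$, $|p|\leq 2|q|-3$.

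\textbf{Wu's theorem replaces your entire case analysis.} You propose to rule out reducible, finite-$\pi_1$, small Seifert, and toroidal fillings separately, invoking Gordon--Luecke/Eudave-Mu\~noz for the toroidal case. The paper avoids all of this: Wu \cite[Theorem~2.5]{Wu} shows that if $\Delta(p/q,\gamma_0)>2$ then the filled manifold carries a genuine lamination strong enough that, combined with Perelman's geometrization, it is hyperbolic outright. So once you know where $\gamma_0$ lives, the proof is just arithmetic: if $\gamma_0=1/0$ then $\Delta(p/q,\gamma_0)=|q|\geq 3$; if $\gamma_0$ is an integer with $|\gamma_0|\geq 2$ then
\[
\Delta(p/q,\gamma_0)=|p-q\gamma_0|\geq |q\gamma_0|-|p|\geq 2|q|-(2|q|-3)=3.
\]
In either case $\Delta>2$ and Wu's theorem finishes. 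Your separate treatment of toroidal fillings is unnecessary, and your stated persistence criterion (``$|p|$ small relative to $q$'') is only an accidental shadow of the correct bound $2|q|-|p|\geq 3$ coming from $|\gamma_0|\geq 2$.
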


\begin{proof}
By \cite[Theorem 5.3]{GO}, there is an essential lamination in the complement of $K$ with a degenerate
 slope $\g_0$ such that $\g_0$ is either the trivial slope or an integer slope of $K$.
 Also by \cite[Theorem 8.8]{G2} (or \cite[Corollary 7.2]{Ro}), if $\g_0$ is an integer slope, then $|\g_0|\geq 2$.
 Furthermore by \cite[Theorem 2.5]{Wu} combined with the geometrization theorem of Perelman, $S^3_{p/q}(K)$ is hyperbolic if $\D(p/q, \g_0)>2$.
Hence if $\g_0=1/0$, then $S^3_{p/q}(K)$ is hyperbolic for $|q|\geq 3$.
So we may assume that $\g_0$ is an integer with $|\g_0|\geq 2$.
Now $\D(p/q, \g_0)=|p-q\g_0|\geq |q\g_0|-|p|\geq 3$ by our condition on $p/q$
and thus  $S^3_{p/q}(K)$ is hyperbolic.
\end{proof}

\begin{cor}\label{|p|<4}
Suppose that $S^3_{p/q}(K)\cong S^3_{p/q}(T_{5,2})$  with $|q|\geq 3$ and $2\leq |p|\leq 2|q|-3$, then $K=T_{5,2}$.
\end{cor}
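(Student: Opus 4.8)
The plan is to run through the trichotomy ``$K$ is hyperbolic / a torus knot / a satellite knot'', using Theorem~\ref{thm:Fibered} to supply the extra control needed in the hyperbolic case. First I would record the basic facts about the target manifold: since $|q|\ge 3$ the slope $\frac pq$ is non-integral, so by \cite{Moser} the manifold $S^3_{p/q}(T_{5,2})$ is Seifert fibred over $S^2(2,5,m)$ with $m=|p-10q|\ge 10|q|-|p|\ge 8|q|+3\ge 27$; in particular it is irreducible, atoroidal and not hyperbolic, and since its base orbifold is hyperbolic the multiset $\{2,5,m\}$ of exceptional multiplicities is determined by its homeomorphism type. By Theorem~\ref{thm:Fibered}, either (1) $K$ is a fibred knot of genus $n+1$, or (2) $K$ is a genus-one knot with $\Delta_K(T)=3T-5+3T^{-1}$ (which is not fibred, its leading coefficient being $3$).

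Next I would eliminate the torus-knot and satellite-knot possibilities purely by comparing Seifert invariants. If $K=T_{c,d}$, then $S^3_{p/q}(T_{c,d})\cong S^3_{p/q}(T_{5,2})$ forces $\{c,d,|p-cdq|\}=\{2,5,m\}$; since an element of $\{c,d\}$ outside $\{2,5\}$ would have to equal $m\ge 27$, which makes $|p-cdq|$ fall outside $\{2,5,m\}$, and since $|p-cdq|=m$ forces $cd=10$ (the other possibility $cdq=2p-10q$ gives $q\mid 2p$, contradicting $\gcd(p,q)=1$ and $|q|\ge 3$), we get $\{c,d\}=\{2,5\}$, hence $K=T_{5,\pm 2}$; and $S^3_{p/q}(T_{5,-2})$ is Seifert fibred over $S^2(2,5,|p+10q|)$ with $|p+10q|\ne m$, so $K=T_{5,2}$. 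If $K$ is a satellite knot, then taking the innermost essential torus and using that $S^3_{p/q}(T_{5,2})$ is atoroidal with $|q|\ge 2$, \cite{GSurg,G1bridge} give that $K$ is a $(\pm a,b)$-cable, $a>0$, $b>1$, of a non-satellite knot $K'$, with $S^3_{p/q}(K)\cong S^3_{p/(qb^2)}(K')$; since $|qb^2|\ge 12$, \cite{LM} forces $K'$ to be non-hyperbolic, hence a nontrivial torus knot $T_{c,d}$, and then $S^3_{p/(qb^2)}(T_{c,d})$ is Seifert fibred over $S^2(c,d,|p-cdqb^2|)$ with $|p-cdqb^2|\ge 22|q|+3\ge 69$, which cannot lie in $\{2,5,m\}$ (the case $|p-cdqb^2|=m$ again violating $\gcd(p,q)=1$ and $|q|\ge 3$), contradicting $\{c,d,|p-cdqb^2|\}=\{2,5,m\}$. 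So $K$ is not a satellite knot, and if $K$ is a torus knot then $K=T_{5,2}$.

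It remains to treat $K$ hyperbolic. In case (1) $K$ is a hyperbolic fibred knot, so, since $|q|\ge 3$ and $2\le|p|\le 2|q|-3$, Lemma~\ref{lamination} shows $S^3_{p/q}(K)$ is hyperbolic, contradicting the first paragraph. The only remaining possibility is case (2): $K$ is a hyperbolic genus-one knot with $\Delta_K(T)=3T-5+3T^{-1}$, and ruling this out is where the real work lies. Here I would first invoke Proposition~\ref{prop:q9} to assume $|q|\le 8$ (if $|q|\ge 9$ that proposition already yields $K=T_{5,2}$); equivalently, since $1/0$ and $\frac pq$ are both exceptional slopes of the one-cusped hyperbolic manifold $S^3-K$, the result of \cite{LM} bounds $|q|=\Delta(1/0,\frac pq)\le 8$. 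Together with $2\le|p|\le 2|q|-3$ this leaves only finitely many slopes, and for those with $p$ even or $3\mid p$ parts (a)--(b) of Remark~\ref{addendum} already exclude case (2), so one may assume $\gcd(p,6)=1$. For the finitely many remaining slopes I expect the delicate step: one must show that the specific small Seifert fibred space $S^3_{p/q}(T_{5,2})$ is not $\frac pq$-surgery on any genus-one knot, which I would attempt either through the Lackenby--Meyerhoff classification of one-cusped hyperbolic manifolds realizing exceptional slopes at the relevant distances, or through a finer analysis of $HF^+(S^3_{p/q}(K))$ in the genus-one case built on Proposition~\ref{prop:CompRed} and the correction-term computations of Section~\ref{sect:GenusBound}. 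This genus-one hyperbolic case is the main obstacle; once it is dispatched, $K$ must be a torus knot, and by the second paragraph $K=T_{5,2}$.
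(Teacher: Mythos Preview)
Your torus-knot and satellite-knot arguments via direct comparison of Seifert invariants are different from the paper's route (which goes through Lemma~\ref{lem:SorT} and the Alexander polynomial computation of Proposition~\ref{prop:q9}), but they are correct and arguably cleaner, since they do not need the dichotomy of Theorem~\ref{thm:Fibered} at all for non-hyperbolic $K$. The hyperbolic fibred case is handled exactly as in the paper, via Lemma~\ref{lamination}.

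The genuine gap is the hyperbolic genus-one case, which you correctly flag as ``the main obstacle'' but do not resolve. Reducing to $|q|\le 8$ and $\gcd(p,6)=1$ still leaves slopes such as $\frac{5}{4},\frac{7}{5},\frac{5}{6},\frac{7}{6},\frac{5}{7},\frac{11}{7},\frac{5}{8},\frac{7}{8},\frac{11}{8},\frac{13}{8}$ (and their negatives), and neither of your suggested strategies is carried out. The paper closes this gap with a single external input you are missing: Theorem~1.5 of Boyer--Gordon--Zhang \cite{BGZ}, which says that if a hyperbolic knot $K$ bounds an essential once-punctured torus and $S^3_{p/q}(K)$ is a Seifert fibred space over $S^2$ with three exceptional fibres, then $|p|\le 3$. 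Combined with Remark~\ref{addendum} (which rules out $|p|=2,3$) and the hypothesis $|p|\ge 2$, this immediately eliminates the hyperbolic genus-one possibility. Once you insert this, your argument is complete.
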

\begin{proof}By Theorem \ref{thm:Fibered}, $K$ is either a fibred knot or a genus one
 knot. Note that $S^3_{p/q}(K)\cong S^3_{p/q}(T_{5,2})$ is Seifert fibred over $S^2$ with three singular fibers. So if $K$ is a genus one hyperbolic knot, then it follows from \cite[Theorem 1.5]{BGZ} that $|p|\leq 3$,
 and by Remark \ref{addendum}, $|p|\ne 2$ or $3$.
 Hence by our assumption on $p/q$, $K$ is not a genus one hyperbolic  knot.
 So by Lemma \ref{lem:SorT}, we may assume that $K$ is   not a genus one knot.
 Hence $K$ is a fibred knot.
Now the proof proceeds similarly to that of Proposition~\ref{prop:q9}, using  Lemma~\ref{lamination} instead of \cite{LM}. We only need to note that if $K$ is a fibred satellite knot, then any companion  knot of $K$
is also fibred.\end{proof}

\begin{proof}[Proof of Theorem~\ref{thm:T52}] Suppose that $S^3_{p/q}(K)\cong S^3_{p/q}(T_{5,2})$
for some slope $p/q$ in the set
\begin{center}$\displaystyle\left\{\frac pq>1, |p|\ge 33\right\}\cup\left\{
\frac pq<-6, |p|\ge33, |q|\ge2\right\}\cup \left\{\frac pq,  |q|\ge9\right\}
\cup\left\{\frac pq,   |q|\geq 3, 2\leq |p|\leq 2|q|-3\right\}$\\
$\displaystyle\cup\left\{9,10,11,\frac{19}{2},\frac{21}2,
\frac{28}{3}, \frac{29}{3}, \frac{31}{3}, \frac{32}{3} \right\}.$
\end{center}

If $\frac pq>1$ and $|p|\ge33$, by Theorem~\ref{thm:Fibered} we know $g(K)\le2$. Now by  Corollary~\ref{cor:p>=33} we have  $K=T_{5,2}$.

If $\frac pq<-6$, $|q|\ge2$ and $|p|\ge33$, the argument is as in the preceding case.

If $|q|\ge9$, then $K=T_{5,2}$ by Proposition~\ref{prop:q9}.

If  $|q|\geq 3$ and $2\leq |p|\leq 2|q|-3$, then  $K=T_{5,2}$ by Corollary \ref{|p|<4}.

If $\frac pq=9$ or $11$, then $S^3_{p/q}(K)\cong S^3_{p/q}(T_{5,2})$ is a lens space
of order $9$ or $11$ respectively.
By \cite[Theorem 1.6]{Baker}, $K$ is not a hyperbolic knot.
 As $K$ is either a genus two fibred knot or a genus one knot
 by Theorem~\ref{thm:Fibered}, we must have $K=T_{5,2}$ by  Lemma \ref{lem:SorT}.

If $\frac pq=10$, then $K=T_{5,2}$ as remarked in Section~\ref{Sect:Intr} (just before Theorem \ref{thm:TorusKnot}).

If $\frac pq=\frac{19}{2},\frac{21}2,
\frac{29}{3}$, or $\frac{31}{3}$, then $S^3_{p/q}(K)\cong S^3_{p/q}(T_{5,2})$ is a lens space.
By the cyclic surgery theorem of \cite{CGLS}, $K$ is not a hyperbolic knot.
Hence it follows from  Theorem~\ref{thm:Fibered} and Lemma \ref{lem:SorT} that $K=T_{5,2}$.

If $\frac pq=\frac{28}{3}$ or $\frac{32}3$,  then $S^3_{p/q}(K)\cong S^3_{p/q}(T_{5,2})$  is a spherical space form.
By \cite[Corollary 1.3]{BZ}, $K$ is not a hyperbolic knot.
Again  by Theorem~\ref{thm:Fibered} and Lemma \ref{lem:SorT} we have $K=T_{5,2}$.
\end{proof}

\end{document}